\newtheorem{theorem}{Theorem}
\theoremstyle{plain}
\newtheorem{corollary}{Corollary}
\newtheorem{definition}{Definition}
\newtheorem{lemma}{Lemma}
\newtheorem{proposition}{Proposition}
\newtheorem{remark}{Remark}
\numberwithin{equation}{section}
\begin{document}
\title[Koszul Gorenstein algebras]{Dualities in Koszul graded AS Gorenstein
algebras}
\author{Roberto Mart\'{\i}nez-Villa}
\address{Centro de Ciencias Matem\'{a}ticas, UNAM, Morelia}
\email{mvilla@matmor.unam.mx}
\urladdr{http://www.matmor.unam.mx}
\date{October 22, 2012}
\subjclass[2000]{Primary 05C38, 15A15; Secondary 05A15, 15A18}
\keywords{local cohomology, Castelnovo-Mumford regularity}
\dedicatory{}
\thanks{}

\begin{abstract}
The paper is dedicated to the study of certain non commutative graded AS
Gorenstein algebras $\Lambda $ [10] , [13], [14].

The main result of the paper is that for Koszul algebras $\Lambda $ with
Yoneda algebra $\Gamma $, such that both $\Lambda $ and $\Gamma $ are graded
AS Gorenstein noetherian of finite local cohomology dimension on both sides,
there are dualities of triangulated categories:

\underline{$gr$}$_{\Lambda }[\Omega ^{-1}]$ $\cong D^{b}(Qgr_{\Gamma })$ and 
\underline{$gr$}$_{\Gamma }[\Omega ^{-1}]$ $\cong D^{b}(Qgr_{\Lambda })$

where, and $Qgr_{\Gamma }$ is the category of tails, this is: the category
of finitely generated graded modules $gr_{\Gamma }$ divided by the modules
of finite length, and $D^{b}(Qgr_{\Gamma })$ the corresponding derived
category and \underline{$gr$}$_{\Lambda }[\Omega ^{-1}]$ the stabilization
of the category of finetely generated graded $\Lambda $-modules, module the
finetely generated projective modules.
\end{abstract}

\maketitle

\section{\protect\bigskip Introduction}

The paper is dedicated to the study of certain non commutative graded AS
Gorenstein algebras $\Lambda $ [10] , [13], [14] those which are noetherian
of finite local cohomology dimension on both sides, and Koszul. We proved in
[13] that the Yoneda algebra $\Gamma $ of a Koszul graded AS Gorenstein
algebra is again graded AS Gorenstein. We will assume in addition $\Lambda $
and $\Gamma $ are both noetherian and of finite local cohomology dimension
on both sides.

For such algebras we can generalize the classical Bernstein-Gelfand-Gelfand
[3] theorem, which says that there is an equivalence of triangulated
categories: \underline{$gr$}$_{\Lambda }$ $\cong D^{b}(CohP_{n})$, where 
\underline{$gr$}$_{\Lambda }$ is the stable category of the finitely
generated graded $\Lambda $-modules over the exterior algebra in $n$
variables and $D^{b}(CohP_{n})$ is the derived category of bounded complexes
of coherent sheaves on $n$-dimensional projective space.

This theorem was generalized in [15] and [16] as follows:

Let $\Lambda $ be a finite dimensional Koszul algebra with noetherianYoneda
algebra $\Gamma $. Then there is a duality of triangulated categories: 
\underline{$gr$}$_{\Lambda }[\Omega ^{-1}]$ $\cong D^{b}(Qgr_{\Gamma })$,
where \underline{$gr$}$_{\Lambda }[\Omega ^{-1}]$ is the stabilization of 
\underline{$gr$}$_{\Lambda }$ (in the sense of [Buchweitz], [2]) and $%
Qgr_{\Gamma }$ is the category of tails, this is: the category of finitely
generated graded modules $gr_{\Gamma }$ divided by the modules of finite
length, and $D^{b}(Qgr_{\Gamma })$ the corresponding derived category.

The main result of the paper is that for Koszul algebras $\Lambda $ with
Yoneda algebra $\Gamma $, such that both $\Lambda $ and $\Gamma $ are graded
AS Gorenstein noetherian of finite local cohomology dimension on both sides,
there are dualities of triangulated categories:

\underline{$gr$}$_{\Lambda }[\Omega ^{-1}]$ $\cong D^{b}(Qgr_{\Gamma })$ and 
\underline{$gr$}$_{\Gamma }[\Omega ^{-1}]$ $\cong D^{b}(Qgr_{\Lambda
}).\medskip \bigskip $

{\LARGE Thanks}{\large : I express my gratitude to Jun-ichi Miyachi for his
criticism and some helpful suggestions.}

\section{Castelnovo-Mumford regularity}

This section is dedicated to review the concepts and results developed by P.
J$\varnothing $rgensen in [8], [9] and to check they apply to the algebras
considered in the paper, for completeness we reproduce his proofs here. The
main result is the following:

\begin{theorem}
Let $\Lambda $ be a noetherian Koszul AS Gorenstein algebra of finite local
cohomology dimension. Then for any finitely generated graded module $M$
there is a truncation $M_{\geq k}$ such that $M_{\geq k}[k]$ is Koszul.
\end{theorem}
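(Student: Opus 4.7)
The plan is to introduce the Castelnuovo--Mumford regularity of a finitely generated graded $\Lambda$-module $M$ by
$$\operatorname{reg}(M) \;=\; \sup\bigl\{\, j - i \;:\; H_{\mathfrak{m}}^{\,i}(M)_{j}\neq 0 \,\bigr\},$$
where $\mathfrak{m}$ is the graded augmentation ideal. The first task is to check that $\operatorname{reg}(M)$ is a finite nonnegative integer for every finitely generated $M$: the range of $i$ is bounded by the finite local cohomology dimension hypothesis on $\Lambda$, while the vanishing of $H_{\mathfrak{m}}^{\,i}(M)_{j}$ for $j\gg 0$ comes from writing $H_{\mathfrak{m}}^{\,i}(M)$ as the direct limit of the finitely generated graded modules $\operatorname{Ext}_{\Lambda}^{\,i}(\Lambda/\mathfrak{m}^{n},M)$, each bounded above in internal degree.

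The heart of the proof is the implication $k\geq\operatorname{reg}(M)\Rightarrow N:=M_{\geq k}[k]$ is Koszul, i.e.\ $\operatorname{Tor}_{i}^{\Lambda}(M_{\geq k},\Lambda_{0})_{j}=0$ whenever $j\neq i+k$. The bridge from local cohomology to Tor is the AS Gorenstein duality: combined with the finiteness of local cohomology dimension it provides a spectral sequence (essentially a noncommutative local duality) that reads off the internal grading of $\operatorname{Tor}_{i}^{\Lambda}(M,\Lambda_{0})$ from that of $H_{\mathfrak{m}}^{\,i}(M)$. I would set this dictionary up first and extract directly the vanishing of $\operatorname{Tor}_{i}^{\Lambda}(M,\Lambda_{0})_{j}$ for $j > i+\operatorname{reg}(M)$.

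To pass from $M$ to its truncations I would run induction on $i$ along the short exact sequences
$$0 \;\longrightarrow\; M_{\geq k+1}\;\longrightarrow\; M_{\geq k}\;\longrightarrow\; M_{k}[-k]\;\longrightarrow\;0.$$
Since $\Lambda$ is Koszul, $\Lambda_{0}$ admits a linear resolution and $\operatorname{Tor}_{i}^{\Lambda}(M_{k}[-k],\Lambda_{0})$ sits in internal degree $i+k$; the long exact Tor sequence then transports the diagonal concentration from $M$ to $M_{\geq k}$ without letting anything drift off the diagonal. The main obstacle I expect is establishing the Tor/local-cohomology dictionary in the noncommutative graded setting with enough control on internal degrees: unlike the commutative Eisenbud--Goto setting one must run the argument through the AS Gorenstein bimodule on both sides and verify that the finite local cohomology dimension hypothesis is used symmetrically. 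Once that dictionary is in place, $k=\operatorname{reg}(M)$ drops out as the effective bound.
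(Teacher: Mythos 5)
Your overall route is the same as the paper's (which follows J\o rgensen): define a Castelnuovo--Mumford regularity from local cohomology, prove a comparison inequality of the form $\mathrm{Ext}\text{-}\mathrm{reg}(M)\leq \mathrm{CMreg}(M)+\mathrm{Ext}\text{-}\mathrm{reg}(A_{0})$ via local duality and a spectral sequence, use Koszulity to kill the second term, and then transfer the bound to truncations. Your transfer step is a genuine (and legitimate) variant: you climb the filtration through the layer sequences $0\to M_{\geq k+1}\to M_{\geq k}\to M_{k}[-k]\to 0$ and use that $\mathrm{Tor}_{i}^{\Lambda}(M_{k}[-k],\Lambda_{0})$ is concentrated in internal degree $i+k$ by Koszulity; combined with the automatic subdiagonal vanishing $\mathrm{Tor}_{i}(M_{\geq k},\Lambda_{0})_{j}=0$ for $j<i+k$ this does pin the Tor's to the diagonal once $k\geq\mathrm{reg}(M)$. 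The paper instead runs the long exact sequence of local cohomology for $0\to M_{\geq s}\to M\to M/M_{\geq s}\to 0$, using that finite-length modules have local cohomology concentrated in one degree (itself a consequence of the Gorenstein local duality formula), to get $\mathrm{CMreg}(M_{\geq s})\leq s$ and then applies the comparison inequality to $M_{\geq s}$ directly. Your version is slightly more elementary at that stage.

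There is, however, a concrete gap in your finiteness argument for $\mathrm{reg}(M)$, and it sits exactly where the paper has to work hardest. You claim that $H_{\mathfrak{m}}^{i}(M)_{j}$ vanishes for $j\gg 0$ because $H_{\mathfrak{m}}^{i}(M)=\varinjlim_{n}\mathrm{Ext}_{\Lambda}^{i}(\Lambda/\mathfrak{m}^{n},M)$ with each term bounded above in internal degree. Neither half of this is sound: for $i>0$ the module $\mathrm{Ext}_{\Lambda}^{i}(\Lambda/\mathfrak{m}^{n},M)$ is a subquotient of $\mathrm{Hom}_{\Lambda}(P_{i},M)\cong\bigoplus e_{l}M[d_{l}]$, which is bounded below but in general not above (being a right $\Lambda/\mathfrak{m}^{n}$-module imposes no upper bound on the grading); and even if each term were bounded above, a direct limit of bounded-above modules need not be bounded above, since the bounds can escape to infinity. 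Right-boundedness of $H_{\mathfrak{m}}^{i}(M)$ for finitely generated $M$ is a nontrivial theorem in the noncommutative graded setting (it can fail for noetherian connected graded algebras), and the paper obtains it only through the AS Gorenstein local duality $D\bigl(\Gamma_{\mathfrak{m}}^{n-i}(M)\bigr)\cong\mathrm{Ext}_{A}^{i}(M,A)$ together with noetherianness of $A^{op}$, which shows each $\Gamma_{\mathfrak{m}}^{i}(M)$ is finitely cogenerated, hence bounded above. Without this input your $k=\mathrm{reg}(M)$ could be $+\infty$ and the whole argument stalls. Relatedly, the ``dictionary'' you defer (reading off the internal degrees of $\mathrm{Tor}_{i}(M,\Lambda_{0})$ from those of $H_{\mathfrak{m}}^{i}(M)$) is not a black box one can cite: it occupies most of the paper's Section 2 (the isomorphism $R\mathrm{Hom}(R\Gamma_{\mathfrak{m}}X,Y)\cong R\mathrm{Hom}(X,Y)$ and the two spectral sequences), so it needs to be built, not assumed. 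Finally, note the sign slip: with your intended use, the regularity should be $\sup\{\,i+j : H_{\mathfrak{m}}^{i}(M)_{j}\neq 0\,\}$, not $\sup\{\,j-i\,\}$.
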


To prove it we use the line of arguments given in [8] and [9] for connected
graded algebras, checking that they easily extend to positively graded
locally finite algebras $A$ over a field $\Bbbk $. This is $A=\underset{%
i\geq 0}{\oplus }A_{i}$ , where $A_{0}=\Bbbk \times \Bbbk \times ...\times
\Bbbk $ and for each $i\geq 0$ dim$_{\Bbbk }A_{i}<\infty $ .

We use the following notation: Given a complex $Y$ of graded left $\Lambda $%
-modules we will denote by $Y^{\prime }$ the dual complex $Y^{\prime
}=Hom_{\Bbbk }(Y,\Bbbk )$.

Given graded $\Lambda $-modules $Y$, $Z$ the degree zero maps will be
denoted by \linebreak $Hom_{Gr_{\Lambda }}(Y,Z)$, $Z[i]$ is the shift
defined as $Z[i]$ $_{j}=Z_{i+j}$ and $Hom_{\Lambda }(Y$, $Z)=$ $\underset{%
i\in 
\mathbb{Z}
}{\oplus }Hom_{Gr_{\Lambda }}(Y$, $Z[i])$.

\begin{proposition}
Let $A$ be a positively graded $\Bbbk $-algebra, $A^{op}$ the opposite
algebra and $X$, $Y$ complexes, $X\in D^{b}(Gr_{A^{op}})$ and $Y\in
D^{-}(Gr_{A})$. Then $(X\overset{L}{\otimes }_{A}Y)^{\prime
}=RHom(Y,X^{\prime })$.
\end{proposition}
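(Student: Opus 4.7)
The plan is to derive this from the classical tensor-Hom adjunction
\[
  \operatorname{Hom}_{\Bbbk}(X \otimes_A Y, \Bbbk) \;\cong\; \operatorname{Hom}_A\bigl(Y, \operatorname{Hom}_{\Bbbk}(X, \Bbbk)\bigr),
\]
which holds for an $A^{op}$-module $X$ and an $A$-module $Y$, the $A$-module structure on the right coming from the right action of $A$ on $X$. The key observations making this easy to lift to the derived level are that $\Bbbk$ is a field, so $\operatorname{Hom}_{\Bbbk}(-,\Bbbk)$ is exact and in particular preserves quasi-isomorphisms; and that the boundedness hypotheses are exactly what is needed for the derived functors in play to be computable by nice resolutions.

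First I would fix a projective resolution $P \twoheadrightarrow Y$ in the category of graded left $A$-modules. Since $Y \in D^{-}(Gr_A)$, such a $P$ may be chosen bounded above, so that $X \overset{L}{\otimes}_A Y$ is represented by the honest total complex $X \otimes_A P$, and $R\operatorname{Hom}(Y, X')$ by $\operatorname{Hom}_A(P, X')$, where $X' = \operatorname{Hom}_{\Bbbk}(X,\Bbbk)$ is bounded because $X \in D^b(Gr_{A^{op}})$. Applying the ungraded-level adjunction in each bidegree of the double complex and then passing to the totalization gives a natural isomorphism
\[
  \operatorname{Hom}_{\Bbbk}(X \otimes_A P,\,\Bbbk) \;\cong\; \operatorname{Hom}_A\bigl(P, \operatorname{Hom}_{\Bbbk}(X,\Bbbk)\bigr) \;=\; \operatorname{Hom}_A(P, X'),
\]
which is exactly the asserted identity $(X \overset{L}{\otimes}_A Y)' \cong R\operatorname{Hom}(Y, X')$.

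To make this rigorous one checks independence of the resolution: if $P' \to P$ is a quasi-isomorphism between bounded-above complexes of projectives, then $X \otimes_A P' \to X \otimes_A P$ is a quasi-isomorphism and, since $\operatorname{Hom}_{\Bbbk}(-,\Bbbk)$ is exact, it remains a quasi-isomorphism after dualizing; dually $\operatorname{Hom}_A(P, X') \to \operatorname{Hom}_A(P', X')$ is a quasi-isomorphism because $P, P'$ are K-projective. So both sides are well-defined in the derived category and the chain-level isomorphism descends.

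The main technical nuisance — not a true obstacle, but the place where one must be careful — is the bookkeeping of the graded structure together with the complex grading: signs in the total complex, compatibility of the adjunction with the internal shift $[i]$, and the fact that in the positively graded locally finite setting $\operatorname{Hom}_{\Bbbk}(-,\Bbbk)$ must be read as the graded dual so that the $A$-action on $X'$ is defined in every internal degree. Once one fixes conventions consistently with the notation already established (the decomposition $\operatorname{Hom}_\Lambda(Y,Z) = \oplus_i \operatorname{Hom}_{Gr_\Lambda}(Y,Z[i])$), the bidegree-wise adjunction assembles without surprises into the asserted derived-category isomorphism.
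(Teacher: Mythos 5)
Your argument is correct and takes essentially the same route as the paper: both replace $Y$ by a bounded-above resolution $F$ of free (projective) graded modules and then identify the complex $(X\otimes_A F)'$ with $\mathrm{Hom}_A(F,X')$ term by term. The only difference is cosmetic: you invoke the tensor--Hom adjunction over $\Bbbk$ in each bidegree, while the paper unwinds that same identification explicitly by writing $F^q=\oplus_{J_q}A$ and computing both sides as $\prod_q\prod_{J_q}(X^{n-q})'$.
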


\begin{proof}
Let $F\rightarrow Y$ be a quasi-isomorphism from a complex of free modules $%
F $. Then $X\overset{L}{\otimes }_{A}Y\cong X\otimes _{A}F$ and ($X\otimes
_{A}F)^{n}=\underset{p+q=n}{\oplus }X^{p}\otimes F^{q}$, where $F^{q}=%
\underset{J_{q}}{\oplus }A$, hence, ($X\otimes _{A}F)^{n}=\underset{p+q=n}{%
\oplus }X^{p}\otimes \underset{J_{q}}{\oplus }A=\underset{p+q=n}{\oplus }%
\underset{J_{q}}{\oplus }X^{p}.$

Therefore: $Hom_{\Bbbk }($($X\otimes _{A}F)^{n}$,$\Bbbk )=Hom_{\Bbbk }(%
\underset{p+q=n}{\oplus }\underset{J_{q}}{\oplus }X^{p},\Bbbk )=\underset{%
p+q=n}{\tprod }\underset{J_{q}}{\tprod }Hom_{\Bbbk }(X^{p},\Bbbk )$%
\linebreak $=\underset{q}{\tprod }\underset{J_{q}}{\tprod }Hom_{\Bbbk
}(X^{n-q},\Bbbk )$.

In the other hand, RHom$_{A}$(Y,X$^{\prime }$)$^{-n}$=Hom$^{\circ }$(F,X$%
^{\prime }$)$^{-n}$=$\underset{q}{\tprod }$Hom$_{A}$(F$^{q}$,(X$^{\prime }$)$%
^{q-n}$)\linebreak =$\underset{q}{\tprod }$Hom$_{A}(\underset{J_{q}}{\oplus }
$A,(X$^{\prime }$)$^{q-n}$)=$\underset{q}{\tprod }\underset{J_{q}}{\tprod }$%
(X$^{\prime }$)$^{q-n}$=$\underset{q}{\tprod }\underset{J_{q}}{\tprod }$(X$%
^{n-q}$)$^{\prime }$=(X$\otimes _{A}$F)$^{\prime -n}$.
\end{proof}

Let's recall the definition of local cohomology dimension.

\begin{definition}
Let $A=\underset{i\geq 0}{\oplus }A_{i}$ be a positively graded $\Bbbk $%
-algebra with graded Jacobson radical $\mathfrak{m}=\underset{i\geq 1}{%
\oplus }A_{i}$, define a left exact endo functor $\Gamma _{\mathfrak{m}%
}:Gr_{A}^{+}\rightarrow Gr_{A}^{+}$ in the category of bounded above graded
\ $A$-modules $Gr_{A}^{+}$ ,by $\Gamma _{\mathfrak{m}}(M)=\underset{k}{%
\underrightarrow{\lim }}Hom_{A}(A/A_{\geq k}$, $M)$ . Denote by $\Gamma _{%
\mathfrak{m}}^{n}(-)$ , the $n$-th derived functor. It is clear that $\Gamma
_{\mathfrak{m}}^{n}(M)=\underset{k}{\underrightarrow{\lim }}%
Ext_{A}^{n}(A/A_{\geq k}$, $M)$ . We say that $A$ has finite local
cohomology dimension, if there exist a non negative integer $d$ such that
for all $M\in $ $Gr_{A}^{+}$ and $n\geq d$ , $\Gamma _{\mathfrak{m}%
}^{n}(M)=0 $.
\end{definition}

We refer to [5] IX Corollary 2.4a for the proof of the following:

\begin{lemma}
Let $A$ be a $\Bbbk $-algebra and $I$ an injective $A$- $A$ bimodule. The $I$
is injective both as left and as a right $A$-module.
\end{lemma}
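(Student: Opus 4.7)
The plan is to realize the forgetful functor from $A$-$A$-bimodules to left $A$-modules as the right adjoint of an exact functor, and then invoke the general principle that right adjoints of exact functors preserve injective objects.

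First, I would identify $A$-$A$-bimodules with left modules over the enveloping algebra $A \otimes_{\Bbbk} A^{op}$, and construct a left adjoint $F$ to the forgetful functor $U$ to left $A$-modules by setting $F(M) = M \otimes_{\Bbbk} A$, with left action inherited from $M$ and right action by right multiplication on the second tensor factor. The adjunction isomorphism
$$Hom_{A \otimes_{\Bbbk} A^{op}}(M \otimes_{\Bbbk} A,\, N) \cong Hom_A(M,\, N),$$
natural in $M$ and $N$, is given by $\phi \mapsto (m \mapsto \phi(m \otimes 1))$ with inverse $\psi \mapsto (m \otimes a \mapsto \psi(m) \cdot a)$, where the right $A$-structure on $N$ is used in the second formula. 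Verifying naturality and bijectivity is routine bookkeeping.

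Second, I would observe that $F$ is exact: since $\Bbbk$ is a field, $A$ is flat as a $\Bbbk$-module, so $-\otimes_{\Bbbk} A$ is exact. Consequently, $U$ carries injectives to injectives. Concretely, given an injective bimodule $I$ and a monomorphism $M \hookrightarrow N$ of left $A$-modules, any left $A$-map $M \to I$ transposes, via the adjunction, to a bimodule map $F(M) \to I$; since $F$ is exact, $F(M) \hookrightarrow F(N)$ is a monomorphism of bimodules, so the map extends to $F(N) \to I$ by injectivity of $I$, and transposing back yields the required extension $N \to I$. A symmetric argument with left adjoint $F'(M) = A \otimes_{\Bbbk} M$ to the forgetful functor to right $A$-modules shows $I$ is also injective as a right $A$-module.

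No substantive obstacle arises: the argument is purely formal once the correct adjunction is set up. The only care needed is distinguishing the external tensor over $\Bbbk$ (which defines the adjoint) from the internal tensor over $A$, and keeping the two $A$-actions on $M \otimes_{\Bbbk} A$ straight so that $F(M)$ is genuinely a bimodule.
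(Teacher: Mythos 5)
Your argument is correct and is essentially the standard one: the paper itself omits the proof, referring to Cartan--Eilenberg [5], IX Corollary 2.4a, and the proof there is exactly your adjunction argument, namely that the forgetful functor from $A\otimes_{\Bbbk}A^{op}$-modules to one-sided $A$-modules is right adjoint to the exact functor $-\otimes_{\Bbbk}A$ (exact because $A$ is free over the field $\Bbbk$), hence preserves injectives. Nothing further is needed.
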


In order to prove next proposition we need the following:

\begin{lemma}
Let $A$ be a positively graded left noetherian $\Bbbk $-algebra of finite
local cohomology dimension on the left and $\{Z_{i}\}_{i\in K}$ a family of $%
\Gamma _{m}$ -acyclic modules. Then $\underset{i\in K}{\oplus }Z_{i}$ is $%
\Gamma _{m}$ -acyclic.
\end{lemma}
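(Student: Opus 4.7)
The plan is to reduce the statement to the interchange of two colimits with cohomology, using the explicit formula $\Gamma_{\mathfrak{m}}^{n}(M)=\varinjlim_{k}\mathrm{Ext}_{A}^{n}(A/A_{\geq k},M)$ recorded in the definition above. The key input from the hypothesis ``left noetherian'' is that each $A/A_{\geq k}$, being a finitely generated left $A$-module over a left noetherian ring, admits a projective resolution $P_{\bullet}\to A/A_{\geq k}$ in which every $P_{i}$ is a finitely generated free $A$-module (because in the noetherian setting every syzygy of a finitely generated module is again finitely generated). This is exactly the condition one needs so that $\mathrm{Hom}_{A}(P_{i},-)$ commutes with arbitrary direct sums, and hence so does $\mathrm{Ext}_{A}^{n}(A/A_{\geq k},-)$ for every $n$.

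Granting that, I would write
$$\Gamma_{\mathfrak{m}}^{n}\Bigl(\bigoplus_{i\in K}Z_{i}\Bigr)=\varinjlim_{k}\mathrm{Ext}_{A}^{n}\Bigl(A/A_{\geq k},\bigoplus_{i\in K}Z_{i}\Bigr)=\varinjlim_{k}\bigoplus_{i\in K}\mathrm{Ext}_{A}^{n}(A/A_{\geq k},Z_{i}),$$
using the previous paragraph for the second equality. Since direct limits and direct sums are both filtered colimits and commute with each other, this last expression equals
$$\bigoplus_{i\in K}\varinjlim_{k}\mathrm{Ext}_{A}^{n}(A/A_{\geq k},Z_{i})=\bigoplus_{i\in K}\Gamma_{\mathfrak{m}}^{n}(Z_{i}).$$
By hypothesis each $Z_{i}$ is $\Gamma_{\mathfrak{m}}$-acyclic, so the right-hand side vanishes for $n\geq 1$, which is precisely the claim for $\bigoplus_{i\in K}Z_{i}$.

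The finite local cohomology dimension assumption enters only mildly: it ensures that the derived functors $\Gamma_{\mathfrak{m}}^{n}$ are all nontrivial to consider for a bounded range $0\leq n\leq d$, and it guarantees the direct sum in question (which may fail to be bounded above even if each summand lies in $Gr_{A}^{+}$) is still handled by the same formula, since the formula $\Gamma_{\mathfrak{m}}^{n}(M)=\varinjlim_{k}\mathrm{Ext}_{A}^{n}(A/A_{\geq k},M)$ is valid for arbitrary graded modules, and vanishes identically above degree $d$.

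The main potential obstacle is the very first step, namely justifying that $\mathrm{Ext}_{A}^{n}(A/A_{\geq k},-)$ commutes with arbitrary direct sums in the graded module category. This is standard but relies genuinely on the noetherian hypothesis; without it one would only have a natural map $\bigoplus\mathrm{Ext}^{n}(A/A_{\geq k},Z_{i})\to\mathrm{Ext}^{n}(A/A_{\geq k},\bigoplus Z_{i})$ and no reason to expect it to be an isomorphism. Once this is in place, the rest is the formal interchange of colimits, so the argument is short.
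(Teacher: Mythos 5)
Your proof is correct, and it establishes the same (slightly stronger) fact as the paper's, namely that $\Gamma_{\mathfrak{m}}^{n}$ commutes with arbitrary direct sums; but the two arguments resolve different variables. The paper takes an injective resolution $Z_{i}\rightarrow I_{\bullet}^{i}$ of each summand, invokes the noetherian hypothesis to conclude that $\bigoplus_{i}I_{\bullet}^{i}$ is again an injective resolution of $\bigoplus_{i}Z_{i}$ (direct sums of injectives are injective over a left noetherian ring), and then commutes $\Gamma_{\mathfrak{m}}=\varinjlim_{s}\mathrm{Hom}_{A}(A/A_{\geq s},-)$ with the direct sum termwise, using that $A/A_{\geq s}$ is finitely presented. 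You instead work directly with the derived functors via the colimit formula $\Gamma_{\mathfrak{m}}^{n}(M)=\varinjlim_{k}\mathrm{Ext}_{A}^{n}(A/A_{\geq k},M)$ recorded in the paper's definition, and obtain the commutation with direct sums by resolving $A/A_{\geq k}$ by finitely generated free modules. The net effect is the same, but your route sidesteps the Bass--Papp-type input (sums of injectives are injective) that the paper's argument needs, at the cost of having to know that $\mathrm{Ext}^{n}$ of a finitely generated module over a left noetherian ring commutes with direct sums in the second variable --- which you justify correctly via the finitely generated free resolution and the exactness of direct sums on cohomology. Your closing observation that the finite local cohomology dimension hypothesis is essentially idle here also matches the paper: its proof, like yours, uses only noetherianness.
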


\begin{proof}
Let $\{Z_{i}\}_{i\in K}$ be a family of $\Gamma _{m}$ -acyclic modules, this
is: each $Z_{i}$ has an injective resolution:

$0\rightarrow Z_{i}\rightarrow I_{0}^{i}\rightarrow I_{1}^{i}\rightarrow
I_{2}^{i}\rightarrow ...I_{k}^{i}\rightarrow I_{k+1}^{i}\rightarrow ...$
such that $0\rightarrow \Gamma _{m}(I_{0}^{i})\rightarrow \Gamma
_{m}(I_{1}^{i})\rightarrow \Gamma _{m}(I_{2}^{i})\rightarrow ...\Gamma
_{m}(I_{k}^{i})\rightarrow \Gamma _{m}(I_{k+1}^{i})\rightarrow $

has homology zero except at degree zero. Since $A$ is noetherian the exact
sequence:

$0\rightarrow \underset{i\in K}{\oplus }(Z_{i})\rightarrow \underset{i\in K}{%
\oplus (}I_{0}^{i})\rightarrow \underset{i\in K}{\oplus }(I_{1}^{i})%
\rightarrow \underset{i\in K}{\oplus }(I_{2}^{i})\rightarrow ...\underset{%
i\in K}{\oplus }(I_{k}^{i})\rightarrow \underset{i\in K}{\oplus }%
(I_{k+1}^{i})\rightarrow ...$

Is an injective resolution of $\underset{i\in K}{\oplus }$($Z_{i})$ and $%
\Gamma _{m}$( $\underset{i\in K}{\oplus }(I_{k}^{i})$)=$\underset{s}{%
\underrightarrow{\lim }}Hom_{A}(A/A_{\geq s},\underset{i\in K}{\oplus }%
(I_{k}^{i}))$ and $A/A_{\geq s}$ finitely presented (again noetherian) $%
\underset{s}{\underrightarrow{\lim }}Hom_{A}(A/A_{\geq s},\underset{i\in K}{%
\oplus }(I_{k}^{i}))=$\linebreak $\underset{s}{\underrightarrow{\lim }}%
\underset{i\in K}{\oplus }Hom_{A}(A/A_{\geq s},(I_{k}^{i}))=\underset{i\in K}%
{\oplus }\underrightarrow{\lim }Hom_{A}(A/A_{\geq s},(I_{k}^{i}))=\underset{%
i\in K}{\oplus }$ $\Gamma _{m}(I_{k}^{i}).$

In fact: $0\rightarrow \Gamma _{m}(\underset{i\in K}{\oplus }%
(Z_{i}))\rightarrow \Gamma _{m}(\underset{i\in K}{\oplus (}%
I_{0}^{i}))\rightarrow \Gamma _{m}(\underset{i\in K}{\oplus }%
(I_{1}^{i}))\rightarrow \Gamma _{m}(\underset{i\in K}{\oplus }%
(I_{2}^{i}))\rightarrow ...\Gamma _{m}(\underset{i\in K}{\oplus }%
(I_{k}^{i}))\rightarrow \Gamma _{m}\underset{i\in K}{\oplus }(I_{k+1}^{i}))$

is isomorphic to $.0\rightarrow \underset{i\in K}{\oplus }\Gamma
_{m}(Z_{i})\rightarrow \underset{i\in K}{\oplus }\Gamma
_{m}(I_{0}^{i})\rightarrow \underset{i\in K}{\oplus }\Gamma
_{m}((I_{1}^{i})\rightarrow \underset{i\in K}{\oplus }\Gamma
_{m}((I_{2}^{i})\rightarrow ...\underset{i\in K}{\oplus }\Gamma
_{m}(I_{k}^{i})\rightarrow \underset{i\in K}{\oplus }\Gamma
_{m}(I_{k+1}^{i})\rightarrow $

the claim follows.
\end{proof}

\begin{proposition}
Let $A$ be a positively graded left noetherian $\Bbbk $-algebra of finite
local cohomology dimension on the left. Then for any $X\in D^{b}(Gr_{A^{e}})$%
, $Y\in D^{-}(Gr_{A})$, there is an isomorphism $R\Gamma _{\mathfrak{m}}(X$ $%
\overset{L}{\otimes }_{A}Y)\cong R\Gamma _{\mathfrak{m}}(X)$ $\overset{L}{%
\otimes }_{A}Y$.
\end{proposition}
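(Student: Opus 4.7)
The plan is to reduce both sides of the asserted isomorphism to explicit double complexes built from a free resolution of $Y$ and an injective bimodule resolution of $X$, and then commute $\Gamma_{\mathfrak{m}}$ past $\otimes_A$ componentwise using the lemma just proved about direct sums of $\Gamma_{\mathfrak{m}}$-acyclics.

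First I would pick a quasi-isomorphism $F \to Y$ from a bounded-above complex of graded free left $A$-modules, with $F^q = \bigoplus_{J_q} A[s_q^j]$ as in the proof of the earlier proposition. Since $X \in D^{b}(Gr_{A^{e}})$, I would next fix an injective resolution $X \to I^{\bullet}$ in the category of graded $A$-$A$-bimodules. By the lemma quoted from Cartan--Eilenberg, every $I^{k}$ is injective as a graded left $A$-module, hence $\Gamma_{\mathfrak{m}}$-acyclic; so $R\Gamma_{\mathfrak{m}}(X)$ is represented by $\Gamma_{\mathfrak{m}}(I^{\bullet})$, and the right hand side of the claim is represented by the total complex of $\Gamma_{\mathfrak{m}}(I^{\bullet}) \otimes_{A} F$, while $X \overset{L}{\otimes}_{A} Y$ is represented by the total complex of $I^{\bullet} \otimes_{A} F$.

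The crucial step is the componentwise identity: freeness of each $F^{q}$ gives
\[
I^{k} \otimes_{A} F^{q} \;=\; \bigoplus_{J_q} I^{k}[s_q^j],
\]
which, by the immediately preceding lemma on direct sums, is $\Gamma_{\mathfrak{m}}$-acyclic. The same direct-sum/limit commutation argument as in that lemma (using noetherianness of $A$ to deduce that $A/A_{\geq s}$ is finitely presented and that $\varinjlim$ commutes with $\bigoplus$) then yields
\[
\Gamma_{\mathfrak{m}}(I^{k} \otimes_{A} F^{q}) \;\cong\; \Gamma_{\mathfrak{m}}(I^{k}) \otimes_{A} F^{q}.
\]
Assembling these isomorphisms with the horizontal and vertical differentials produces an isomorphism of total complexes $\Gamma_{\mathfrak{m}}(\mathrm{Tot}(I^{\bullet} \otimes_{A} F)) \cong \mathrm{Tot}(\Gamma_{\mathfrak{m}}(I^{\bullet}) \otimes_{A} F)$. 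Since each component of the first total complex is $\Gamma_{\mathfrak{m}}$-acyclic, $\Gamma_{\mathfrak{m}}$ computes $R\Gamma_{\mathfrak{m}}$ on it, so both sides of the desired formula are represented by the same complex.

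The main obstacle I expect is the convergence bookkeeping: $F$ is bounded above but possibly unbounded below, $I^{\bullet}$ is bounded below but possibly unbounded above, so $\mathrm{Tot}(I^{\bullet} \otimes_{A} F)$ is a priori unbounded in both directions, and the standard acyclic-replacement theorem does not apply for free. This is precisely where the hypothesis of finite local cohomology dimension enters: it guarantees $\Gamma_{\mathfrak{m}}$ has finite cohomological dimension, so one may truncate $I^{\bullet}$ (up to quasi-isomorphism of $R\Gamma_{\mathfrak{m}}(X)$) to a bounded injective complex, at which point the double complex becomes bounded in the relevant direction and the acyclic-resolution argument is legitimate. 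Everything else is a routine manipulation modelled on the proof of Proposition~1 and the preceding lemma.
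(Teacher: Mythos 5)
Your proposal is correct and follows essentially the same route as the paper: injective bimodule resolution of $X$ (each term injective as a left module by the Cartan--Eilenberg lemma), truncation to a bounded complex of $\Gamma_{\mathfrak{m}}$-acyclics using finite local cohomology dimension, a free resolution of $Y$, and the componentwise direct-sum plus finitely-presented-$A/A_{\geq k}$ argument to commute $\Gamma_{\mathfrak{m}}$ past $\otimes_{A}F^{q}$. The only slip is describing the truncation as a ``bounded injective complex'': its final term is the image of a differential, hence merely $\Gamma_{\mathfrak{m}}$-acyclic rather than injective, but acyclicity is all your argument (and the paper's) actually uses.
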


\begin{proof}
The complex $X$ is in $D^{+}$, hence, it has an injective resolution with
objects in $Gr_{A^{e}}$ , $X\rightarrow I$ and $X\in D^{b}(Gr_{A^{e}})$
implies H$^{i}(X)=0$ for almost all $i$.

Assume H$^{i}(X)=0$ for $i>s$ and let $Z=Kerd_{s}$, where $%
d_{s}:I^{s}\rightarrow I^{s+1}$ is the differential. Hence, $0\rightarrow
Z\rightarrow I^{s}\rightarrow I^{s+1}\rightarrow I^{s+2}...\rightarrow
I^{s+k}\rightarrow $ is an injective resolution of $Z$ as $A-A$ bimodule.

Since $A$ has finite local cohomology dimension, there exists an integer $t$
such that $\Gamma _{\mathfrak{m}}^{j}(Z)=0$ for $j>t$. If $Z^{\prime }=\func{%
Im}d_{t}$, $d_{t}:I^{t}\rightarrow I^{t+1}$ is the differential, then $%
\Gamma _{\mathfrak{m}}^{j}(Z^{\prime })=0$ for $j>0$ , this is $Z^{\prime }$
is $\Gamma _{\mathfrak{m}}$-acyclic.

The complex $Q:$ $0\rightarrow I^{0}\rightarrow I^{1}\rightarrow
...I^{t}\rightarrow Z^{\prime }\rightarrow 0$ is a complex $\Gamma _{%
\mathfrak{m}}$-acyclic which is quasi-isomorphic to $I$.

The $\Gamma _{\mathfrak{m}}$-acyclic complexes form an adapted class (See
[7], [19]).

Let $L\rightarrow Y$ be a free resolution of $Y$. Then we have isomorphisms: 
$X$ $\overset{L}{\otimes }_{A}Y\cong X\otimes _{A}L\cong Q\otimes _{A}L$.

The module ($Q\otimes _{A}L)^{n}$ is a direct sum of objects in the complex $%
Q$ and $A$ noetherian implies sums of injective is injective, therefore $%
Q\otimes _{A}L$ is $\Gamma _{\mathfrak{m}}$-acyclic.

It follows $R\Gamma _{\mathfrak{m}}(X$ $\overset{L}{\otimes }_{A}Y\cong
\Gamma _{\mathfrak{m}}(Q\otimes _{A}L).$But we have isomorphisms:

$Hom_{A}(A/A_{\geq k}$, $(Q\otimes _{A}L)^{n})=Hom_{A}(A/A_{\geq k}$, $%
Q^{p}\otimes _{A}\underset{J_{n-p}}{\oplus }A)=\underset{J_{n-p}}{\oplus }%
Hom_{A}(A/A_{\geq k}$, $Q^{p})=Hom_{A}(A/A_{\geq k}$, $Q^{p})\otimes _{A}%
\underset{J_{n-p}}{\oplus }A=Hom_{A}(A/A_{\geq k}$, $Q^{p})\otimes
_{A}L^{n-p}$.

Therefore: $\underset{k}{\underrightarrow{\lim }}Hom_{A}(A/A_{\geq k}$, $%
(Q\otimes _{A}L)^{n})=(\underset{k}{\underrightarrow{\lim }}%
Hom_{A}(A/A_{\geq k}$, $Q^{p}))\otimes _{A}L^{n-p}.$ We are using the fact
that $A$ is noetherian, hence $A/A_{\geq k}$ is finitely presented.

We have proved: $\Gamma _{\mathfrak{m}}(Q\otimes _{A}L)\cong \Gamma _{%
\mathfrak{m}}(Q)\otimes _{A}L$, therefore: $R\Gamma _{\mathfrak{m}}(X$ $%
\overset{L}{\otimes }_{A}Y)\cong R\Gamma _{\mathfrak{m}}(X)$ $\overset{L}{%
\otimes }_{A}Y$.
\end{proof}

The proof of the following lemma was given in [8] and reproduced in [14], we
will not give it here.

\begin{proposition}
Let $\Lambda $ be a positively graded $\Bbbk $-algebra such that the graded
simple have projective resolutions consisting of finitely generated
projective modules, $\mathfrak{m}$ the graded radical of $\Lambda $ and $%
\mathfrak{m}^{op}$ the graded radical of $\Lambda ^{op}$. Then for any
integer $k$, $\Gamma _{\mathfrak{m}}^{k}(\Lambda )=\Gamma _{\mathfrak{m}%
^{op}}^{k}(\Lambda ).$
\end{proposition}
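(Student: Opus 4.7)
The plan is to reduce the equality to the level of each truncation $\Lambda/\Lambda_{\geq n}$ and then to exploit the bimodule structure of $\Lambda$ to identify the Ext groups computed from the two sides.

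By definition, $\Gamma_{\mathfrak{m}}^{k}(\Lambda) = \underset{n}{\underrightarrow{\lim }} Ext_{\Lambda}^{k}(\Lambda/\Lambda_{\geq n}, \Lambda)$, with an analogous formula on the right. Since direct limits are exact and preserve isomorphisms, it is enough to produce, for each $n$ and $k$, a natural isomorphism $Ext_{\Lambda}^{k}(\Lambda/\Lambda_{\geq n}, \Lambda) \cong Ext_{\Lambda^{op}}^{k}(\Lambda/\Lambda_{\geq n}, \Lambda)$ compatible with the transition maps induced by $\Lambda/\Lambda_{\geq n+1} \twoheadrightarrow \Lambda/\Lambda_{\geq n}$.

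Fix $n$. To compute the left side, I would pick a minimal finitely generated projective resolution $P^{\bullet} \to \Lambda/\Lambda_{\geq n}$ of left $\Lambda$-modules. Such a resolution exists because $\Lambda/\Lambda_{\geq n}$ has a finite composition series whose factors are shifts of graded simples, each admitting a finitely generated projective resolution by hypothesis; a horseshoe argument assembles these into a finitely generated resolution of the whole. Writing $P^{j} = \bigoplus_{l}\Lambda e_{i_{l}}[-d_{l}]$ and applying $Hom_{\Lambda}(-,\Lambda)$ termwise yields a complex $\bigoplus_{l} e_{i_{l}}\Lambda[d_{l}]$ of finitely generated right projectives, whose cohomology is $Ext_{\Lambda}^{\bullet}(\Lambda/\Lambda_{\geq n}, \Lambda)$. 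An analogous construction starting from a minimal right resolution $Q^{\bullet}\to \Lambda/\Lambda_{\geq n}$ and applying $Hom_{\Lambda^{op}}(-,\Lambda)$ computes $Ext_{\Lambda^{op}}^{\bullet}(\Lambda/\Lambda_{\geq n}, \Lambda)$.

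The crux is to identify the two complexes. The natural device is a Cartan-Eilenberg type double complex: construct a projective bimodule resolution $R^{\bullet}\to \Lambda/\Lambda_{\geq n}$ that is simultaneously finitely generated from the left and from the right, exploiting the two-sided f.g.\ projective hypothesis together with the finite length of $\Lambda/\Lambda_{\geq n}$. Then $Ext_{\Lambda}^{k}(\Lambda/\Lambda_{\geq n},\Lambda)$ and $Ext_{\Lambda^{op}}^{k}(\Lambda/\Lambda_{\geq n},\Lambda)$ arise as cohomology of, respectively, $Hom_{\Lambda}(R^{\bullet},\Lambda)$ and $Hom_{\Lambda^{op}}(R^{\bullet},\Lambda)$, and the isomorphism between them is natural at the chain level; passing to the direct limit over $n$ then yields the proposition.

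The main obstacle is precisely the construction of $R^{\bullet}$ with the required finiteness on both sides, or equivalently the matching of left and right graded Betti data of $\Lambda/\Lambda_{\geq n}$ so that the two one-sided Ext computations above land in the same complex. This is the point at which the hypothesis that all graded simples have finitely generated projective resolutions (on both sides) is essential, and it is the step one would expect to be technical and to require spectral sequence or horseshoe bookkeeping.
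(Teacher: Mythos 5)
First, a caveat: the paper does not actually prove this proposition --- it says explicitly that the proof ``was given in [8] and reproduced in [14], we will not give it here'' --- so there is no in-paper argument to compare yours against, and your proposal must be judged on its own terms. Judged so, it has a fatal gap at the very first reduction. You propose to produce, for each fixed $n$, an isomorphism $Ext_{\Lambda }^{k}(\Lambda /\Lambda _{\geq n},\Lambda )\cong Ext_{\Lambda ^{op}}^{k}(\Lambda /\Lambda _{\geq n},\Lambda )$ compatible with the transition maps. That fixed-$n$ statement is false in general, already for $k=0$: $Hom_{\Lambda }(\Lambda /\Lambda _{\geq n},\Lambda )$ is the set of elements of $\Lambda $ annihilated on the left by $\Lambda _{\geq n}$, while $Hom_{\Lambda ^{op}}(\Lambda /\Lambda _{\geq n},\Lambda )$ is the set annihilated on the right by $\Lambda _{\geq n}$, and even in the good (two-sided noetherian, locally finite) situations where the proposition holds, an element left-annihilated by $\Lambda _{\geq n}$ is in general only right-annihilated by $\Lambda _{\geq m}$ for some larger $m$. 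The equality $\Gamma _{\mathfrak{m}}^{k}(\Lambda )=\Gamma _{\mathfrak{m}^{op}}^{k}(\Lambda )$ is irreducibly a statement about the colimits; your reduction replaces it by a strictly stronger and generally false claim. (As a concrete warning, in $A=\Bbbk \langle u,v\rangle /(u^{2},uv)$ one has $uA_{\geq 1}=0$ while $v^{n}u\neq 0$ for all $n$, so $u$ lies in the right torsion but not the left torsion of $A$; this algebra even satisfies the stated hypothesis on resolutions of the simple, which shows the proposition is silently using further standing assumptions from [8], such as two-sided noetherianity and local finiteness.)

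Second, the device you propose for the fixed-$n$ step would not deliver the identification even where one exists. The terms of a projective bimodule resolution $R^{\bullet }$ are summands of sums of copies of $\Lambda \otimes _{\Bbbk }\Lambda $; these are essentially never finitely generated as one-sided modules when $\Lambda $ is infinite dimensional, and, more importantly, $Hom_{\Lambda }(R^{\bullet },\Lambda )$ and $Hom_{\Lambda ^{op}}(R^{\bullet },\Lambda )$ are two genuinely different complexes (for $R^{j}=\Lambda \otimes V\otimes \Lambda $ they are $Hom_{\Bbbk }(V\otimes \Lambda ,\Lambda )$ and $Hom_{\Bbbk }(\Lambda \otimes V,\Lambda )$) with no natural chain-level comparison; no choice of $R^{\bullet }$ can manufacture an isomorphism between two $Ext$ groups that are not isomorphic. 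The ingredient that actually drives the proofs in [8] and [14] --- and which your sketch never invokes --- is local finiteness together with the left/right symmetry of $Tor_{\bullet }^{\Lambda }(\Lambda _{0},\Lambda _{0})$, i.e., the fact that the minimal projective resolutions of $\Lambda _{0}$ as a left and as a right module have the same graded Betti data; the paper itself runs exactly this comparison later in Section 2 to prove that the $Ext$-regularity of $\Lambda _{0}$ is the same on both sides. That symmetry, fed into the direct limit $\underset{n}{\underrightarrow{\lim }}$ rather than applied at a fixed truncation, is what identifies the two local cohomologies; a Cartan--Eilenberg bimodule resolution of $\Lambda /\Lambda _{\geq n}$ cannot replace it.
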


We can prove now the following:

\begin{proposition}
Let $A$ be a positively graded locally finite noetherian $\Bbbk $-algebra of
finite local cohomology dimension on both sides. Let $X$, $Y$ be bounded
complexes of finitely generated graded $A$-modules. Then there exists a
natural isomorphism:

$RHom_{A}(R\Gamma _{\mathfrak{m}}(X)$, $Y)\cong RHom_{A}(X$, $Y)$.
\end{proposition}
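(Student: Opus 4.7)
The plan is to convert both sides of the proposed isomorphism into derived tensor products via Proposition~2, and then to transport $R\Gamma_{\mathfrak{m}}$ across the tensor product using the projection formula of Proposition~5 together with the two-sided symmetry of Proposition~7.

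After dualizing each side using Proposition~2, and noting that $Y$, being a bounded complex of finitely generated graded modules over a locally finite algebra, is graded-reflexive in the sense that $Y''\cong Y$, the claim reduces to producing a natural isomorphism
$$Y'\,\overset{L}{\otimes}_A\,R\Gamma_{\mathfrak{m}}(X)\;\cong\;Y'\,\overset{L}{\otimes}_A\,X,$$
where $Y'=Hom_{\Bbbk}(Y,\Bbbk)$ is regarded as a right $A$-module. Proposition~5 applied with the bimodule $A$ gives $R\Gamma_{\mathfrak{m}}(X)\cong R\Gamma_{\mathfrak{m}}(A)\,\overset{L}{\otimes}_A\,X$, and Proposition~7 identifies $R\Gamma_{\mathfrak{m}}(A)$ with $R\Gamma_{\mathfrak{m}^{op}}(A)$ as $A$-bimodules. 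The right-module analogue of Proposition~5 (valid under the two-sided hypothesis) then yields $Y'\,\overset{L}{\otimes}_A\,R\Gamma_{\mathfrak{m}^{op}}(A)\cong R\Gamma_{\mathfrak{m}^{op}}(Y')$. Combining these steps reduces the proposition to the single identity $R\Gamma_{\mathfrak{m}^{op}}(Y')\cong Y'$.

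To establish this last identity I would show that $Y'$ is both $\mathfrak{m}^{op}$-torsion and $\Gamma_{\mathfrak{m}^{op}}$-acyclic. Torsion follows from a degree count: $Y$ is generated in a bounded range of degrees and each $Y_n$ is finite dimensional, so a homogeneous $f\in Y'_{-n}=(Y_n)^{\ast}$ is annihilated on the right by $\mathfrak{m}^k$ as soon as $k$ exceeds the gap between the lowest degree occurring in $Y$ and $n$. For acyclicity, the graded-injective hull of a torsion module is again torsion (its socle consists of shifted simples whose injective hulls are torsion), so a graded-injective resolution of $Y'$ lies entirely in the torsion subcategory, on which $\Gamma_{\mathfrak{m}^{op}}$ is the identity functor; Lemma~2 on sums of acyclics lets one assemble the resolution term by term.

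The main obstacle I expect is the rigorous justification of the right-module projection formula invoked in the reduction. Proposition~5 is formulated on the left, and its right-module counterpart requires rerunning the $\Gamma_{\mathfrak{m}}$-acyclic truncation argument on the opposite side, while invoking Proposition~7 together with the two-sided finite local cohomology dimension hypothesis to see that the left- and right-handed functors agree on $A$-bimodules. Once this symmetric projection formula is in place and one verifies that $Y''\cong Y$ in the derived sense for bounded complexes of finitely generated graded modules (using locally finite structure), the assembly of the three isomorphisms and a single re-dualization yields the proposition.
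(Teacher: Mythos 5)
Your proposal is correct and follows essentially the same route as the paper: both arguments reduce the statement, via the $\Bbbk$-duality $(X\otimes^{L}_{A}Y)'\cong RHom_{A}(Y,X')$, the projection formula $R\Gamma_{\mathfrak{m}}(X)\cong R\Gamma_{\mathfrak{m}}(A)\otimes^{L}_{A}X$, and the identification of $R\Gamma_{\mathfrak{m}}(A)$ with $R\Gamma_{\mathfrak{m}^{op}}(A)$, to the single fact that $Y'$ is torsion with $R\Gamma_{\mathfrak{m}^{op}}(Y')\cong Y'$. The only cosmetic difference is that the paper finishes with tensor--hom adjunction (establishing $RHom_{A}(R\Gamma_{\mathfrak{m}^{op}}(A),Y)\cong Y$ and substituting it into the second argument) while you stay on the tensor side throughout; the right-module projection formula whose justification concerns you is invoked by the paper in exactly the same way, relying on the two-sided noetherian and finite local cohomology hypotheses.
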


\begin{proof}
Letting $Y^{\prime }$ be $Y^{\prime }=Hom_{\Bbbk }(Y$,$\Bbbk )$, there is an
isomorphism $RHom_{A}(R\Gamma _{\mathfrak{m}}(X)$, $Y)\cong RHom_{A}(R\Gamma
_{\mathfrak{m}}(X)$, $Y^{\prime \prime })$.

By Proposition 1, $RHom_{A}(R\Gamma _{\mathfrak{m}^{op}}(A)$, $Y^{\prime
\prime })\cong (Y^{\prime }\overset{L}{\otimes }_{A}R\Gamma _{\mathfrak{m}%
^{op}}(A))^{\prime }$.

By Proposition 2, $Y^{\prime }\overset{L}{\otimes }_{A}R\Gamma _{\mathfrak{m}%
^{op}}(A)\cong R\Gamma _{\mathfrak{m}^{op}}(Y^{\prime }\overset{L}{\otimes }%
_{A}A)\cong R\Gamma _{\mathfrak{m}^{op}}(Y^{\prime })$.

Let $F$ be a free resolution of $Y$, it consists of finitely generated $A$%
-modules. Hence $Y^{\prime }$ consists of finitely cogenerated injective $A$%
-modules, then of torsion modules, and $\Gamma _{\mathfrak{m}%
^{op}}(Y^{\prime })\cong \Gamma _{\mathfrak{m}^{op}}(F^{\prime })=F^{\prime
}\cong Y^{\prime }$.

Therefore: $RHom_{A}(R\Gamma _{\mathfrak{m}^{op}}(A)$, $Y)\cong Y^{\prime
\prime }\cong Y$.

Now, there are isomorphisms:

$RHom_{A}(R\Gamma _{\mathfrak{m}}(X)$, $Y)\cong RHom_{A}(R\Gamma _{\mathfrak{%
m}}(A\overset{L}{\otimes _{A}}X)$, $Y)\cong RHom_{A}(R\Gamma _{\mathfrak{m}%
}(A)\overset{L}{\otimes _{A}}X)$, $Y)\cong RHom_{A}(X$, $RHom(R\Gamma _{%
\mathfrak{m}}(A),Y).$

The last isomorphism is by adjunction and the previous one is by Proposition
2.

By Proposition 3, $RHom_{A}(R\Gamma _{\mathfrak{m}}(X)$, $Y)\cong RHom_{A}(X$%
, $RHom(R\Gamma _{\mathfrak{m}^{op}}(A),Y).$

It follows: $RHom_{A}(R\Gamma _{\mathfrak{m}}(X)$, $Y)\cong RHom_{A}(X$, $Y)$%
.
\end{proof}

Next we have:

\begin{lemma}
For complexes $X\in D^{-}(Gr_{A})$, $Y\in D^{+}(Gr_{A})$, there exists a
spectral sequence $E_{2}^{m,n}=Ext_{A}^{m}(h^{-n}X$, $Y)$ converging to $%
Ext_{A}^{n+m}(X$, $Y)$.
\end{lemma}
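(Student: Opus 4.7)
The plan is to derive this as the standard hyper-derived-functor spectral sequence for the contravariant left exact functor $\mathrm{Hom}_{A}(-,Y)$, applied to the complex $X$. Since $Gr_A$ has enough graded projectives and $X\in D^{-}(Gr_A)$, I would first choose a Cartan--Eilenberg projective resolution $P^{\bullet,\bullet}\to X$: a bicomplex of graded projective $A$-modules, bounded above in the homological direction, such that each column $P^{-\bullet,n}\to X^{n}$ is a projective resolution of $X^{n}$ and such that the induced horizontal sequences on the cocycles, coboundaries, and on $h^{n}(X)$ are likewise projective resolutions. The total complex $\mathrm{Tot}(P)$ is then a bounded-above complex of projectives quasi-isomorphic to $X$.

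Second, since $Y\in D^{+}(Gr_A)$, I would pick a bounded-below injective resolution $Y\to I$ and form the double complex $C^{p,q}=\mathrm{Hom}_{A}(P^{-p,\bullet},I^{q})$ obtained by applying $\mathrm{Hom}_{A}(-,I)$ term by term to $P$. The total complex $\mathrm{Tot}(C)$ then computes $\mathrm{RHom}_{A}(X,Y)$, so $H^{m+n}(\mathrm{Tot}(C))=\mathrm{Ext}_{A}^{m+n}(X,Y)$.

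Third, I would run the spectral sequence of $C^{\bullet,\bullet}$ obtained by taking cohomology first in the horizontal direction of $P$. The Cartan--Eilenberg property says that the horizontal cohomology of each row of $P$ reproduces a projective resolution of $h^{n}(X)$, so that applying $\mathrm{Hom}_{A}(-,I)$ and then vertical cohomology yields $\mathrm{Ext}_{A}^{m}(h^{-n}X,Y)$ on the $E_{2}$-page. This produces
\[
E_{2}^{m,n}=\mathrm{Ext}_{A}^{m}(h^{-n}X,Y)\Longrightarrow \mathrm{Ext}_{A}^{m+n}(X,Y),
\]
which is the asserted spectral sequence.

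The only point that needs real care is convergence: because $P$ is bounded above in one direction and $I$ bounded below in the other, the bicomplex $C$ is supported in a half-plane, every antidiagonal $p+q=n$ has only finitely many nonzero entries, and the associated filtration is regular so the spectral sequence converges strongly to $H^{\bullet}(\mathrm{Tot}(C))$. The Cartan--Eilenberg resolution in $Gr_A$ exists by the classical construction since $Gr_A$ has enough projectives, so I do not anticipate any obstacle beyond careful bookkeeping of the two indices and the sign conventions in the totalization.
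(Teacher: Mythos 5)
Your argument is correct in substance, but it takes a genuinely different (and heavier) route than the paper. The paper never resolves $X$: it takes only an injective resolution $Y\rightarrow J$ and forms the double complex $M^{m,n}=Hom_{A}(X^{-m},J^{n})$ directly from the terms of $X$. The identification of the $E_{2}$-page is then immediate, because each $J^{n}$ is injective, so $Hom_{A}(-,J^{n})$ is exact and commutes with taking homology in the $X$-direction, giving $Hom_{A}(h^{-m}X,J^{n})$ on the $E_{1}$-page and $Ext_{A}^{n}(h^{-m}X,Y)$ on $E_{2}$; convergence holds because $X$ bounded above and $J$ bounded below force each antidiagonal of $M$ to be finite. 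You instead invoke a Cartan--Eilenberg projective resolution of $X$ and argue via the splitting properties of the rows of cycles, boundaries and homology. That works, and it is the textbook hyper-derived-functor construction, but it costs you the existence and bookkeeping of the CE bicomplex, and as written your $C^{p,q}=Hom_{A}(P^{-p,\bullet},I^{q})$ is really a triple complex partially totalized --- resolving both variables is redundant here; with a CE resolution you could apply $Hom_{A}(-,Y)$ directly, or, as the paper does, skip the CE resolution altogether and resolve only $Y$. Your convergence discussion (finitely many nonzero entries on each antidiagonal, regular filtration) is the right point to worry about and matches the paper's implicit use of the third-quadrant position of $M$. In short: both proofs are valid; the paper's is the more economical one, yours is the more general-purpose one, and you would do well to note that only one of the two resolutions is actually needed.
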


\begin{proof}
Let $Y\rightarrow J$ be an injective resolution. The complex $X$ is of the
form:

$X:..\rightarrow X^{-m}\rightarrow ...\rightarrow X^{-k}\rightarrow
X^{-k+1}\rightarrow ...X^{-\ell }\rightarrow 0.$

For each $n$, there is a complex: $Hom_{A}(X$, $J^{n}):$

$0\rightarrow Hom_{A}(X^{-\ell },J^{n})\rightarrow Hom_{A}(X^{-\ell
-1},J^{n})\rightarrow Hom_{A}(X^{-k+1},J^{n})\rightarrow ...$

$Hom_{A}(X^{-m},J^{n})\rightarrow ...$

Since $J^{n}$ is injective, $H^{m}(Hom_{A}(X$, $J^{n}))\cong
Hom_{A}(H^{m}(X) $, $J^{n})$.

If $M^{m,n}=Hom_{A}(X^{-m}$, $J^{n})$ , then $M=(M^{m,n})$ is a complex in
the third quadrant.\newline
$%
\begin{array}{ccccccc}
& 0 &  & 0 &  & 0 &  \\ 
& \downarrow &  & \downarrow &  & \downarrow &  \\ 
\longleftarrow & \text{Hom}_{A}\text{(X}^{-m}\text{,J}^{0}\text{)} & 
\longleftarrow ... & \text{Hom}_{A}\text{(X}^{-\ell -1}\text{,J}^{0}\text{)}
& \longleftarrow & \text{Hom}_{A}\text{(X}^{-\ell }\text{,J}^{0}\text{)} & 
\longleftarrow \text{0} \\ 
& \downarrow &  & \downarrow &  & \downarrow &  \\ 
\longleftarrow & \text{Hom}_{A}\text{(X}^{-m}\text{,J}^{1}\text{)} & 
\longleftarrow ... & \text{Hom}_{A}\text{(X}^{-\ell -1}\text{,J}^{1}\text{)}
& \longleftarrow & \text{Hom}_{A}\text{(X}^{-\ell }\text{,J}^{1}\text{)} & 
\longleftarrow \text{0} \\ 
& \downarrow &  & \downarrow &  & \downarrow &  \\ 
& \underset{.}{\overset{.}{.}} &  & \underset{.}{\overset{.}{.}} &  & 
\underset{.}{\overset{.}{.}} &  \\ 
\longleftarrow & \text{Hom}_{A}\text{(X}^{-m}\text{,J}^{t}\text{)} & 
\longleftarrow ... & \text{Hom}_{A}\text{(X}^{-\ell -1}\text{,J}^{t}\text{)}
& \longleftarrow & \text{Hom}_{A}\text{(X}^{-\ell }\text{,J}^{t}\text{)} & 
\longleftarrow \text{0} \\ 
& \downarrow &  & \downarrow &  & \downarrow & 
\end{array}%
$

Taking first the horizontal homology, then the vertical homology, we obtain
the spectral sequence E$_{2}^{m,n}$=Ext$_{A}^{m}$(h$^{-n}$X,Y) which
converges to the homology of the total complex, which by definition, is Ext$%
_{A}^{n+m}$(X,Y) [24].
\end{proof}

For the next lemma we need to assume either $A$ is Gorenstein or it is of
finite local cohomology dimension.

\begin{lemma}
For X$\in $D$^{-}$(Gr$_{A}$), there is a spectral sequence E$_{2}^{m,n}$=Tor$%
_{-m}^{A}$($\Gamma _{\mathfrak{m}^{op}}^{n}$(A),X) converging to $\Gamma _{%
\mathfrak{m}}^{m+n}$(X).
\end{lemma}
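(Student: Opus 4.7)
The plan is to realize the spectral sequence as the hyperhomology (double complex) spectral sequence associated to the derived tensor product $R\Gamma_{\mathfrak{m}}(A)\overset{L}{\otimes}_{A}X$, reading it in one direction to identify the $E_{2}$-page with Tor groups and in the other to identify the abutment with local cohomology.

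First I would reduce the abutment to a derived tensor product. Since $X\cong A\overset{L}{\otimes}_{A}X$, Proposition 2 (applied to the bimodule $A$ in place of $X$, and the module $X$ in place of $Y$) yields a natural isomorphism
\[
R\Gamma_{\mathfrak{m}}(X)\;\cong\;R\Gamma_{\mathfrak{m}}\bigl(A\overset{L}{\otimes}_{A}X\bigr)\;\cong\;R\Gamma_{\mathfrak{m}}(A)\overset{L}{\otimes}_{A}X.
\]
Here the finite local cohomology dimension of $A$ on the left ensures that $R\Gamma_{\mathfrak{m}}(A)$ is represented by a bounded complex of $\Gamma_{\mathfrak{m}}$-acyclic $A$-bimodules (the truncation $Q$ used in the proof of Proposition 2), so the derived tensor product on the right is well defined.

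Next I would compute $R\Gamma_{\mathfrak{m}}(A)\overset{L}{\otimes}_{A}X$ by a double complex. Choose an injective resolution $A\to I^{\bullet}$ in $Gr_{A^{e}}$ truncated into a bounded $\Gamma_{\mathfrak{m}}$-acyclic complex $Q^{\bullet}$, and a free resolution $L^{\bullet}\to X$ in $Gr_{A}$. The double complex $C^{p,q}=\Gamma_{\mathfrak{m}}(Q^{p})\otimes_{A}L^{-q}$ has total complex quasi-isomorphic to $R\Gamma_{\mathfrak{m}}(A)\overset{L}{\otimes}_{A}X$. Running the spectral sequence that first takes homology in the $Q$-direction computes $H^{n}(R\Gamma_{\mathfrak{m}}(A))=\Gamma_{\mathfrak{m}}^{n}(A)$, and then the $L^{\bullet}$-direction computes $\mathrm{Tor}$, producing
\[
E_{2}^{m,n}\;=\;\mathrm{Tor}_{-m}^{A}\bigl(\Gamma_{\mathfrak{m}}^{n}(A),\,X\bigr).
\]
Invoking Proposition 3 to rewrite $\Gamma_{\mathfrak{m}}^{n}(A)=\Gamma_{\mathfrak{m}^{op}}^{n}(A)$ gives exactly the stated $E_{2}$-term; the other filtration converges to the cohomology of the total complex, which by the previous paragraph is $\Gamma_{\mathfrak{m}}^{m+n}(X)$.

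The main technical point, and the reason the hypothesis on $A$ is needed, is convergence of the first spectral sequence together with the legitimate passage from the bimodule computation to the right-module computation. Finite local cohomology dimension makes $R\Gamma_{\mathfrak{m}}(A)$ a bounded complex (so the double complex has bounded cohomology in both directions, and the two spectral sequences converge strongly), and the Gorenstein/finiteness hypothesis is precisely what licences Proposition 3 so that $\Gamma_{\mathfrak{m}}^{n}(A)$ can be replaced by $\Gamma_{\mathfrak{m}^{op}}^{n}(A)$ when tensoring on the right by a left $A$-module. Everything else is a routine application of the standard spectral sequence of a first-quadrant (up to shift) double complex computing a hyper-derived functor.
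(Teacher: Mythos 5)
Your proposal is correct and follows essentially the same route as the paper: both form the double complex obtained by tensoring the truncated bounded $\Gamma_{\mathfrak{m}}$-acyclic complex representing $R\Gamma(A)$ with a free resolution of $X$, read off the $E_{2}$-page from one filtration, and identify the abutment with $R\Gamma_{\mathfrak{m}}(X)$ via Propositions 2 and 3. The only (immaterial) difference is bookkeeping: you identify the abutment first and switch from $\Gamma_{\mathfrak{m}}^{n}(A)$ to $\Gamma_{\mathfrak{m}^{op}}^{n}(A)$ at the $E_{2}$-page, whereas the paper starts from $R\Gamma_{\mathfrak{m}^{op}}A$ and performs the switch when identifying the total complex at the end.
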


\begin{proof}
By definition, $\Gamma _{\mathfrak{m}}^{m}=h^{m}R\Gamma _{\mathfrak{m}}$.
Let $F$ be a free resolution of $X$.

Then we have a double complex $M^{m,n}=($ $R\Gamma _{\mathfrak{m}%
^{op}}A)^{m}\otimes F^{n}$.

The complex $R\Gamma _{\mathfrak{m}^{op}}A$ is bounded in the Gorenstein
case. If $A$ is of finite local cohomology dimension $R\Gamma _{\mathfrak{m}%
^{op}}A$, can be truncated to a bounded complex of $\Gamma _{\mathfrak{m}%
^{op}}$-acyclic modules.

Taking the second filtration, we obtain a spectral sequence E$_{2}^{m,n}$%
=\linebreak Tor$_{-m}^{A}$($\Gamma _{\mathfrak{m}^{op}}^{n}$(A),X)
converging to the total complex of $M$.

We have isomorphisms $TotM\cong (R\Gamma _{\mathfrak{m}^{op}}A)\overset{L}{%
\otimes }_{A}X\cong (R\Gamma _{\mathfrak{m}}A)\overset{L}{\otimes }%
_{A}X\cong R\Gamma _{\mathfrak{m}}X$.
\end{proof}

\begin{definition}
(Castelnovo-Mumford) A complex $X\in D(Gr_{A})$ is called $p$-regular if $%
\Gamma _{\mathfrak{m}}^{m}(X)_{\geq p+1-m}=0$ for all $m$.

If $X$ is $p$-regular but not $p$-$1$-regular, then we say it has Cohen
Macaulay regularity $p$ and write $CMregX=p$. If $X$is not $p$-regular for
any $p$, the we say $CMregX=\infty $ .

If $X$ is $p$-regular for all $p$, this is $R\Gamma _{\mathfrak{m}}X=0$,
then $CMregX=-\infty $.
\end{definition}

Artin and Schelter introduced in [1] a notion of a non commutative regular
algebra that has been very important. We will use here a generalization of
non commutative Gorenstein that extends the notion of Artin-Schelter
regular. This is a variation of the definition given for connected algebras
in [10].

\begin{definition}
Let $\Bbbk $ be a field and $\Lambda $ a locally finite positively graded $%
\Bbbk $-algebra. Then we say that $\Lambda $ is graded Artin-Schelter
Gorenstein (AS\ Gorenstein, for short) if the following conditions are
satisfied:

There exists a non negative integer $n$, called the graded injective
dimension of $\Lambda $, such that:

i) For all graded simple $S_{i}$ concentrated in degree zero and non
negative integers $j\neq n$, Ext$_{\Lambda }^{j}(S_{i},\Lambda )=0$.

ii) We have an equality Ext$_{\Lambda }^{n}(S_{i},\Lambda )=S_{i}^{\prime
}[-n_{i}]$ , with $S_{i}^{\prime }$ a graded $\Lambda ^{op}$-simple.

iii) For a non negative integer $k\neq n,$ Ext$_{\Lambda
^{op}}^{k}(Ext_{\Lambda }^{n}(S_{i},\Lambda ),\Lambda )=0$ and\linebreak\ Ext%
$_{\Lambda ^{op}}^{n}(Ext_{\Lambda }^{n}(S_{i},\Lambda ),\Lambda )=S_{i}$.
\end{definition}

We need to assume now $A$ is graded AS Gorenstein noetherian of finite local
cohomology dimension. Under this conditions the following was proved in [14].

\begin{theorem}
Let $\Lambda $ be a graded AS Gorenstein algebra of graded injective
dimension $n$ and such that all graded simple modules have projective
resolutions consisting of finitely generated projective modules and assume $%
\Lambda $ has finite local cohomology dimension. Then for any graded left
module $M$ there is a natural isomorphism: $D(\underrightarrow{\lim }%
Ext_{\Lambda }^{i}(\Lambda /\Lambda _{\geq k},M))=Ext_{\Lambda }^{n-i}(M,$ $%
D(\Gamma _{\mathfrak{m}}^{n}(\Lambda ))$ , for $0\leq i\leq n$.
\end{theorem}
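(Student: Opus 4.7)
The plan is to deduce the claim from a single local-duality isomorphism in the derived category, namely
\[
R\Gamma_{\mathfrak{m}}(M)' \;\cong\; RHom_{\Lambda}\!\bigl(M,\,D(\Gamma_{\mathfrak{m}}^{n}(\Lambda))[n]\bigr),
\]
and then extract the stated identity by taking $H^{-i}$ of both sides. On the right one has $H^{-i}RHom_{\Lambda}(M,D(\Gamma_{\mathfrak{m}}^{n}(\Lambda))[n])=Ext_{\Lambda}^{n-i}(M,D(\Gamma_{\mathfrak{m}}^{n}(\Lambda)))$; on the left, $H^{-i}(Y')=D(H^{i}(Y))$ for any complex $Y$, so $H^{-i}R\Gamma_{\mathfrak{m}}(M)' = D(\Gamma_{\mathfrak{m}}^{i}(M))$. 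This matches the theorem exactly and accounts naturally for the range $0\leq i\leq n$, outside which both sides vanish.

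To produce the displayed isomorphism I would first apply Proposition 2 with the bimodule $X=\Lambda$ and $Y=M$, which gives $R\Gamma_{\mathfrak{m}}(M)\cong R\Gamma_{\mathfrak{m}}(\Lambda)\overset{L}{\otimes}_{\Lambda}M$. Taking the $\Bbbk$-dual and invoking Proposition 1---with $R\Gamma_{\mathfrak{m}}(\Lambda)$ regarded as a bounded complex of right $\Lambda$-modules, a view legitimized by Proposition 3 which identifies left and right local cohomology on $\Lambda$---yields
\[
R\Gamma_{\mathfrak{m}}(M)' \;\cong\; RHom_{\Lambda}\!\bigl(M,\,R\Gamma_{\mathfrak{m}}(\Lambda)'\bigr).
\]

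The remaining ingredient is the identification $R\Gamma_{\mathfrak{m}}(\Lambda)' \cong D(\Gamma_{\mathfrak{m}}^{n}(\Lambda))[n]$, which reduces to showing that $R\Gamma_{\mathfrak{m}}(\Lambda)$ is concentrated in cohomological degree $n$. The graded simples $S_i$ possess finitely generated projective resolutions and satisfy $Ext_{\Lambda}^{j}(S_i,\Lambda)=0$ for $j\neq n$ by the AS Gorenstein hypothesis; an induction on composition length therefore yields $Ext_{\Lambda}^{j}(\Lambda/\Lambda_{\geq k},\Lambda)=0$ for $j\neq n$ and every $k\geq 0$. Passing to the direct limit over $k$ gives $\Gamma_{\mathfrak{m}}^{j}(\Lambda)=0$ for $j\neq n$, so that $R\Gamma_{\mathfrak{m}}(\Lambda)\simeq\Gamma_{\mathfrak{m}}^{n}(\Lambda)[-n]$ and consequently $R\Gamma_{\mathfrak{m}}(\Lambda)'\simeq D(\Gamma_{\mathfrak{m}}^{n}(\Lambda))[n]$, completing the chain.

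The main obstacle is the careful bookkeeping of bimodule structures across Propositions 1, 2 and 3: Proposition 1 requires the first tensor factor to be a right module, while Proposition 2 and the vanishing argument use the left structure, and naturality in $M$ must be maintained throughout. Proposition 3 is precisely what bridges the two sides, provided one verifies that the identification $\Gamma_{\mathfrak{m}}^{k}(\Lambda)\cong\Gamma_{\mathfrak{m}^{op}}^{k}(\Lambda)$ is compatible with the residual bimodule structure---which it is, since its proof proceeds via a bimodule injective resolution and Lemma 1. Sign and shift conventions in the cohomological translation step are a secondary bookkeeping concern but not a genuine obstacle. (As a sanity check, one can re-derive the same identity more combinatorially from Lemma 2: under the AS Gorenstein hypothesis the spectral sequence $E_2^{m,n}=Tor_{-m}^{\Lambda}(\Gamma_{\mathfrak{m}^{op}}^{n}(\Lambda),M)\Rightarrow\Gamma_{\mathfrak{m}}^{m+n}(M)$ degenerates onto the single row $n$, giving $Tor_{n-i}^{\Lambda}(\Gamma_{\mathfrak{m}}^{n}(\Lambda),M)\cong\Gamma_{\mathfrak{m}}^{i}(M)$, whose $\Bbbk$-dual is the asserted Ext by the usual Hom-$\otimes$ adjunction.)
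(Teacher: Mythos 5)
Your argument is correct, and it is essentially the intended one. Note that the paper itself does not prove this theorem; it defers to reference [14] with the remark that the result ``was proved in [14].'' Nevertheless, the chain you propose --- $R\Gamma _{\mathfrak{m}}(M)\cong R\Gamma _{\mathfrak{m}}(\Lambda )\overset{L}{\otimes }_{\Lambda }M$ from Proposition 2, the $\Bbbk $-dual adjunction $(X\overset{L}{\otimes }_{\Lambda }Y)^{\prime }\cong RHom_{\Lambda }(Y,X^{\prime })$ from Proposition 1, the left/right identification of $\Gamma _{\mathfrak{m}}^{k}(\Lambda )$ from Proposition 3, and the collapse of $R\Gamma _{\mathfrak{m}}(\Lambda )$ to the single degree $n$ via the AS Gorenstein vanishing and induction on the length of $\Lambda /\Lambda _{\geq k}$ --- is exactly the toolkit the paper assembles in Section 2 and deploys in the same order in its proof of Proposition 4 ($RHom_{A}(R\Gamma _{\mathfrak{m}}(X),Y)\cong RHom_{A}(X,Y)$), so your derivation is the natural reconstruction of the cited proof. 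Two small points deserve care: the induction giving $Ext_{\Lambda }^{j}(\Lambda /\Lambda _{\geq k},\Lambda )=0$ for $j\neq n$ uses that the composition factors are \emph{shifts} of the degree-zero simples, which is harmless only because $Ext_{\Lambda }$ here is the full graded Ext $\underset{i}{\oplus }Hom_{Gr}(-,-[i])$; and the application of Proposition 1 needs $R\Gamma _{\mathfrak{m}}(\Lambda )$ to be a bounded complex of right modules, which you correctly secure from the bimodule injective resolution, Lemma 1, and the finite local cohomology dimension hypothesis. Your spectral-sequence sanity check via Lemma 3 is also sound and gives an alternative, slightly more elementary route to the same identity.
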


Let $D_{fg}^{b}(Gr_{A})$ be the subcategory of $D^{b}(Gr_{A})$ of all
bounded complexes with finitely generated homology.

Let $X\in D_{fg}^{b}(Gr_{A})$ and $X\rightarrow I$ an injective resolution.
Since $X$ is bounded, there is an integer $t$ such that $H^{k}(X)=H^{k}(I)=0$
for $k>t$.

As above, we can truncate $I$ to obtain a complex $I_{>}$ consisting of $%
\Gamma _{\mathfrak{m}}$-acyclic modules, $I_{>}\cong X$ and $I_{>}\in
D_{fg}^{b}(Gr_{A})$.

We want to prove $R\Gamma _{\mathfrak{m}}(X)^{\prime }\in D_{fg}^{b}(Gr_{A})$%
.

$X:$ 0$\rightarrow X_{s_{1}}\overset{d_{1}}{\rightarrow }X_{s_{2}}%
\rightarrow ...X_{s_{\ell -1}}\overset{d_{\ell -1}}{\rightarrow }X_{s_{\ell
}}\rightarrow 0.$

We apply induction on $\ell .$

If $\ell =1$, then $X$ is concentrated in degree $s_{1}$ and $X$ of finitely
generated homology means X is finitely generated and it has a projective
resolution:

...$\rightarrow P_{k}\rightarrow P_{k-1}\rightarrow ...P_{1}\rightarrow
P_{0}\rightarrow X\rightarrow 0$ with each $P_{i}$ finitely generated.

Dualizing with respect to the ring we obtain a complex:

$P^{\ast }:0\rightarrow P_{0}^{\ast }\rightarrow P_{1}^{\ast }\rightarrow
...P_{k}^{\ast }\rightarrow P_{k+1}^{\ast }\rightarrow ...$ with homology $%
H^{i}(P^{\ast })=Ext_{A}^{i}(X,A)$. Since $A^{op}$ is noetherian, each $%
Ext_{A}^{i}(X,A)$ is finitely generated.

But it was proved in Theorem ?, $Ext_{A}^{i}(X,A)\cong D((\underrightarrow{%
\lim }Ext_{A}^{n-i}(A/A_{\geq k},X))=(\Gamma _{\mathfrak{m}%
}^{n-i}(X))^{\prime }$ and $Ext_{A}^{i}(X,A)$ finitely generated, implies $%
R\Gamma _{\mathfrak{m}}(X)^{\prime }\in D_{fg}^{b}(Gr_{A})$.

Let $C$ be $C=$ Coker$d_{\ell -1}=H^{\ell }(X)$ and $B_{\ell }=\func{Im}%
d_{\ell -1}.$

Then there is an exact sequence of complexes:

$%
\begin{array}{ccccccccc}
& 0 &  & 0 &  & 0 &  & 0 &  \\ 
& \downarrow &  & \downarrow &  & \downarrow &  & \downarrow &  \\ 
0\rightarrow & X_{s_{1}} & \rightarrow & X_{s_{2}} & \rightarrow ... & 
X_{s_{\ell -1}} & \overset{d_{\ell -1}}{\rightarrow } & B_{\ell } & 
\rightarrow 0 \\ 
& \downarrow &  & \downarrow &  & \downarrow &  & \downarrow &  \\ 
0\rightarrow & X_{s_{1}} & \rightarrow & X_{s_{2}} & \rightarrow ... & 
X_{s_{\ell -1}} & \rightarrow & X_{s_{\ell }} & \rightarrow 0 \\ 
& \downarrow &  & \downarrow &  & \downarrow &  & \downarrow &  \\ 
& 0 &  & 0 &  & 0 & \rightarrow & C & \rightarrow 0 \\ 
&  &  &  &  &  &  & \downarrow &  \\ 
&  &  &  &  &  &  & 0 & 
\end{array}%
$

The complex:

$Y:$ 0$\rightarrow X_{s_{1}}\overset{d_{1}}{\rightarrow }X_{s_{2}}%
\rightarrow ...X_{s_{\ell -1}}\overset{d_{\ell -1}}{\rightarrow }B_{\ell
}\rightarrow 0$ is quasi- isomorphic to the complex:

0$\rightarrow X_{s_{1}}\overset{d_{1}}{\rightarrow }X_{s_{2}}\rightarrow
...X_{s_{\ell -2}}\overset{d_{\ell -2}}{\rightarrow }Z_{s_{\ell
-1}}\rightarrow 0$ with $Z_{s_{\ell -1}}=Kerd_{\ell -1}$.

By induction hypothesis $R\Gamma _{\mathfrak{m}}(Y)^{\prime }\in
D_{fg}^{b}(Gr_{A}).$

We have a triangle $Y\rightarrow X\rightarrow C\rightarrow Y[1]$ which
induces a triangle:

$R\Gamma _{\mathfrak{m}}(Y)\rightarrow R\Gamma _{\mathfrak{m}}(X)\rightarrow
R\Gamma _{\mathfrak{m}}(C)\rightarrow R\Gamma _{\mathfrak{m}}(Y)[1]$

By the long homology sequence, there is an exact sequence:

$\Gamma _{\mathfrak{m}}^{j-1}(C)\rightarrow \Gamma _{\mathfrak{m}%
}^{j}(Y)\rightarrow \Gamma _{\mathfrak{m}}^{j}(X)\rightarrow \Gamma _{%
\mathfrak{m}}^{j}(C)\rightarrow \Gamma _{\mathfrak{m}}^{j+1}(Y)$

Dualizing with respect to $\Bbbk $, there is an exact sequence:

($\Gamma _{\mathfrak{m}}^{j}(C))^{\prime }\rightarrow (\Gamma _{\mathfrak{m}%
}^{j}(X))^{\prime }\rightarrow (\Gamma _{\mathfrak{m}}^{j}(Y))^{\prime }.$

Using $A$ is noetherian and induction, it follows $(\Gamma _{\mathfrak{m}%
}^{j}(X))^{\prime }$ is finitely generated.

Since for any complex $Z$ and any $i$ there is an isomorphism $%
H^{i}(Z)^{\prime }\cong H^{i}(Z^{\prime })$.

It follows $R\Gamma _{\mathfrak{m}}(X)^{\prime }\in D_{fg}^{b}(Gr_{A})$.

Therefore $R\Gamma _{\mathfrak{m}}(X)$ is a complex with finitely
cogenerated homology and each $\Gamma _{\mathfrak{m}}^{j}(X)$ is finitely
cogenerated hence $CMregX\neq \infty $ and $CMregX\neq -\infty $.

In the graded AS Gorenstein case, there is an integer $n$ such that $\Gamma
_{\mathfrak{m}}^{j}(A)=\Gamma _{\mathfrak{m}^{op}}^{j}(A)=0$ for $j\neq n$ .
According to [14], $I_{n}^{\prime }=\Gamma _{\mathfrak{m}}^{n}(A)=\Gamma _{%
\mathfrak{m}^{op}}^{n}(A)=J_{n}^{\prime }$, where $I_{n}^{\prime }=\oplus
D(P_{j}^{\ast })[-n_{\sigma (j)}]$ and $J_{n}^{\prime }=\oplus
D(P_{j})[-n_{\tau (j)}].$

Since $\sigma $ and $\tau $ are permutations, $I_{n}^{\prime }$ is
cogenerated as left module in the same degrees as $J_{n}^{\prime }$ is
cogenerated as right module and $CMreg(_{A}A)=CMreg(A_{A})$.

\begin{definition}
(Ext-regularity) The complex $X\in D(Gr_{A})$ is $r$-$Ext$-$regular$ if
\linebreak $Ext_{A}^{m}(X,A_{0})_{\leq -r-1-m}=0$ for all $m$.

If $X$ is $r$-$Ext$-$regular$ and is not (r-1)-$Ext$-$regular$ we say $Ext$-$%
regular(X)=r$. If $X$ is not r-$Ext$-$regular$ for any $r$, then $Ext$-$%
regular(X)=\infty $ and if for all $r$ the complex $X$ is r-$Ext$-$regular$,
this is $Ext_{A}(X,A_{0})=0$, then $Ext$-$regular(X)=-\infty $.
\end{definition}

In [15] we gave the following definition.

\begin{definition}
A complex of graded modules over a graded algebra is subdiagonal if for each 
$i$ the $i$th module is generated in degrees at least $i$, provided is not
zero.
\end{definition}

We will make use of the following:

\begin{lemma}
Let $A$ be a locally finite graded noetherian algebra over a field $\Bbbk $
and $X$ a complex in $D_{fg}^{-}(Gr_{A})$. Then $X$ has a projective
resolution $P\rightarrow X$ consisting of finitely generated graded
projective modules such that $P$ is subdiagonal.
\end{lemma}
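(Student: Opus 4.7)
My plan is to realize $P$ as the totalization of a Cartan--Eilenberg-style projective resolution of a suitably chosen representative of $X$, and to extract subdiagonality from the fact that the graded Jacobson radical $\mathfrak{m}$ of $A$ strictly raises internal degrees. The underlying observation is that if a finitely generated graded $A$-module $M$ is generated in degrees $\geq k$, then its minimal graded projective cover has kernel contained in $\mathfrak{m}\cdot P$, hence itself generated in degrees $\geq k+1$; iterating, the $j$-th term of the minimal graded projective resolution of $M$ is generated in degrees $\geq k+j$. Here I use both that $A_0$ is semisimple (so minimal graded projective covers exist) and that $A$ is locally finite.

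\textbf{Construction.} First I would replace $X$ by a quasi-isomorphic bounded-above complex of finitely generated graded modules, using that $A$ is noetherian and $H^{\bullet}(X)$ is finitely generated. Within this reduction, I would arrange that each $X^i$ is generated in degrees $\geq i$; this is the delicate normalization step, to be achieved by iterated stupid truncations $\sigma^{\geq i}X$ and splicing resolutions via mapping cones (finite generation being preserved at each splice by noetherianness). For the normalized representative I then form the Cartan--Eilenberg resolution $P^{i,j}\to X^i$, where the $i$-th column is the minimal graded projective resolution of $X^i$; each $P^{i,j}$ is a finitely generated graded projective generated in degrees $\geq i+|j|$ (with $j\leq 0$), by the core observation. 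Finally I totalize: $P^k := \bigoplus_{i+j=k}P^{i,j}$. Since $X$ is bounded above, say with top degree $n$, and $j\leq 0$ forces $i\geq k$, the sum is finite, so $P^k$ is finitely generated projective, and standard totalization yields a quasi-isomorphism $P\to X$.

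\textbf{Subdiagonality and main obstacle.} Subdiagonality of $P$ is then a direct degree count: each summand $P^{i,j}$ of $P^k$ with $i\in[k,n]$ and $j=k-i$ is generated in degrees $\geq i+(i-k)\geq k$, using the normalization that $X^i$ is generated in degrees $\geq i$. The main obstacle is precisely this normalization: without it, low-degree generators inside some $X^i$ would force correspondingly low-degree summands in $P^k$, destroying the subdiagonal bound. I expect this step to demand the most care, because it must be performed quasi-isomorphically and preserve finite generation; the tools available are the stupid truncations of $X$, the noetherian hypothesis on $A$ (to control each step of the splicing), and the core observation above applied termwise, which together allow one to iteratively improve the generating-degree profile of the $X^i$ from top to bottom while remaining inside $D^{-}_{fg}(Gr_A)$.
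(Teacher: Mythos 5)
Your plan is genuinely different from the paper's: you build the resolution column by column from a degree-normalized representative of $X$ and totalize, whereas the paper starts from an \emph{arbitrary} graded projective resolution $P$ of $X$ and, using that each boundary and homology module is finitely generated (noetherianness) and admits a finitely generated projective cover ($A_0$ semisimple, $A$ locally finite), splits $P\cong P'\oplus P''$ with $P''$ contractible and $P'$ a minimal complex of finitely generated projectives. Subdiagonality of $P'$ then comes for free from minimality, via exactly your ``core observation'' that the graded radical raises internal degree.

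The gap in your version is the step you yourself flag as the main obstacle: arranging that each $X^i$ is generated in degrees at least $i$. You do not say how the iterated stupid truncations and splicings would accomplish this, and in fact it cannot be done in general: $H^i(X)$ is a subquotient of $X^i$, so over a positively graded algebra a generator of the homology in internal degree below its homological degree forces a low-degree generator in that slot of \emph{every} quasi-isomorphic representative, and no normalization is possible. (The same positivity constraint is implicit in the lemma itself; the paper sidesteps it by never modifying $X$ and only ever taking projective covers, so the degree bookkeeping is absorbed into minimality.) A second, more technical problem: taking the minimal resolution of each $X^i$ separately does not produce a Cartan--Eilenberg double complex --- the lifted horizontal differentials compose to maps that are only null-homotopic, not zero --- so you would need to run the genuine Cartan--Eilenberg construction on boundaries and homology (which does preserve your degree bounds, since submodules and quotients of a module concentrated in degrees at least $i$ are again concentrated there), or replace the double complex by a filtered or twisted complex. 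As written, the proposal identifies the correct degree mechanism but closes neither of these two steps.
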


\begin{proof}
Since $X$ has a graded projective resolution $P$ we may consider $P$ instead
of $X$ and prove that $P=P^{\prime }\oplus P^{\prime \prime }$ where $%
P^{\prime }$ is a subdiagonal complex of finitely generated projective
graded modules and $H^{i}(P^{\prime \prime })=0$ for all $i$.

$P:...\rightarrow P_{n+1}\rightarrow P_{n}\rightarrow P_{n-1}\rightarrow
...P_{1}\rightarrow P_{0}\rightarrow 0$

There is an exact sequence: $0\rightarrow B_{1}\rightarrow P_{0}\rightarrow
C\rightarrow 0$ with $H^{0}(P)=C$ finitely generated.

Since $C$ has a finitely generated projective cover $P_{0}^{\prime }$, there
is an exact commutative diagram:

$%
\begin{array}{ccccccc}
& 0 &  & 0 &  & 0 &  \\ 
& \downarrow &  & \downarrow &  & \downarrow &  \\ 
0\rightarrow & B_{1}^{\prime } & \rightarrow & P_{0}^{\prime } & \rightarrow
& C & \rightarrow 0 \\ 
& \downarrow &  & \downarrow &  & \downarrow &  \\ 
0\rightarrow & B_{1} & \rightarrow & P_{0} & \rightarrow & C & \rightarrow 0
\\ 
& \downarrow &  & \downarrow &  & \downarrow &  \\ 
0\rightarrow & P_{0}^{\prime \prime } & \rightarrow & P_{0}^{\prime \prime }
& \rightarrow & 0 &  \\ 
& \downarrow &  & \downarrow &  &  &  \\ 
& 0 &  & 0 &  &  & 
\end{array}%
$

Hence $B_{1}\cong B_{1}^{\prime }\oplus P_{0}^{\prime \prime }$ and $%
B_{1}^{\prime }$ has a finitely generated projective cover $P_{1}^{\prime }$
and there is an exact sequence: $0\rightarrow Z_{1}^{\prime }\rightarrow
P_{1}^{\prime }\rightarrow B_{1}^{\prime }\rightarrow 0.$

We have an exact commutative diagram:

$%
\begin{array}{ccccccc}
& 0 &  & 0 &  & 0 &  \\ 
& \downarrow &  & \downarrow &  & \downarrow &  \\ 
0\rightarrow & Z_{1}^{\prime } & \rightarrow & P_{1}^{\prime }\oplus
P_{0}^{\prime \prime } & \rightarrow & B_{1}^{\prime }\oplus P_{0}^{\prime
\prime } & \rightarrow 0 \\ 
& \downarrow &  & \downarrow &  & \downarrow &  \\ 
0\rightarrow & Z_{1} & \rightarrow & P_{1} & \rightarrow & B_{1}^{\prime
}\oplus P_{0}^{\prime \prime } & \rightarrow 0 \\ 
& \downarrow &  & \downarrow &  & \downarrow &  \\ 
0\rightarrow & P_{1}^{\prime \prime } & \rightarrow & P_{1}^{\prime \prime }
& \rightarrow & 0 &  \\ 
& \downarrow &  & \downarrow &  &  &  \\ 
& 0 &  & 0 &  &  & 
\end{array}%
$

Therefore: $P$ is isomorphic to the complex:

$...\rightarrow P_{n}\rightarrow P_{n-1}\rightarrow ...P_{2}\overset{d_{2}}{%
\rightarrow }P_{1}^{\prime }\oplus P_{0}^{\prime \prime }\oplus
P_{1}^{\prime \prime }\overset{d_{1}}{\rightarrow }P_{0}^{\prime }\oplus
P_{0}^{\prime \prime }\rightarrow 0$

with $\func{Im}d_{2}\subseteq Z_{1}^{\prime }\oplus P_{1}^{\prime \prime }.$

It follows $P$ decomposes as $P=P^{\prime }\oplus P^{\prime \prime }$ with:

$P\prime :..\rightarrow P_{n}\rightarrow P_{n-1}\rightarrow ...P_{2}\overset{%
d_{2}}{\rightarrow }P_{1}^{\prime }\oplus P_{1}^{\prime \prime }\overset{%
d_{1}}{\rightarrow }P_{0}^{\prime }\rightarrow 0$

$P^{\prime \prime }:$ $0\rightarrow P_{0}^{\prime \prime }\rightarrow
P_{0}^{\prime \prime }\rightarrow 0$

The projective $P_{0}^{\prime }$ is finitely generated.

Assume now $P=P^{\prime }\oplus P^{\prime \prime }$, where $H^{i}(P^{\prime
\prime })=0$ for all $i$ and

$P%
{\acute{}}%
:$ $.\rightarrow P_{n+1}\rightarrow P_{n}\rightarrow P_{n-1}\rightarrow
...P_{1}\rightarrow P_{0}\rightarrow 0$ with $P_{i}$ finitely generated for $%
0\leq i\leq n-2$.

Hence $B_{n-2}=\func{Im}d_{n-1}$ is finitely generated, therefore it has
finitely generated projective cover $P_{n-1}^{\prime }$ and as before, there
is a commutative exact diagram:

$%
\begin{array}{ccccccc}
& 0 &  & 0 &  & 0 &  \\ 
& \downarrow &  & \downarrow &  & \downarrow &  \\ 
0\rightarrow & Z_{n-1}^{\prime } & \rightarrow & P_{n-1}^{\prime } & 
\rightarrow & B_{n-2} & \rightarrow 0 \\ 
& \downarrow &  & \downarrow &  & \downarrow &  \\ 
0\rightarrow & Z_{n-1} & \rightarrow & P_{n-1} & \rightarrow & B_{n-2} & 
\rightarrow 0 \\ 
& \downarrow &  & \downarrow &  & \downarrow &  \\ 
0\rightarrow & P_{n-1}^{\prime \prime } & \rightarrow & P_{n-1}^{\prime
\prime } & \rightarrow & 0 &  \\ 
& \downarrow &  & \downarrow &  &  &  \\ 
& 0 &  & 0 &  &  & 
\end{array}%
$

Therefore: $Z_{n-1}\cong Z_{n-1}^{\prime }\oplus P_{n-1}^{\prime \prime }$.

Letting $B_{n-1}$be the image of $d_{n}$and $H_{n-1}$ the homology $%
H^{n-1}(P)$, which we assume finitely generated, there is an exact sequence: 
$0\rightarrow B_{n-1}\rightarrow Z_{n-1}^{\prime }\oplus P_{n-1}^{\prime
\prime }\rightarrow H_{n-1}\rightarrow 0$ and an induced commutative, exact
diagram:

$%
\begin{array}{ccccccc}
& 0 &  & 0 &  & 0 &  \\ 
& \downarrow &  & \downarrow &  & \downarrow &  \\ 
0\rightarrow & \overline{B}_{n-1} & \rightarrow & Z_{n-1}^{\prime } & 
\rightarrow & H_{n-1}^{\prime } & \rightarrow 0 \\ 
& \downarrow &  & \downarrow &  & \downarrow &  \\ 
0\rightarrow & B_{n-1} & \rightarrow & Z_{n-1}^{\prime }\oplus
P_{n-1}^{\prime \prime } & \rightarrow & H_{n-1} & \rightarrow 0 \\ 
& \downarrow &  & \downarrow &  & \downarrow &  \\ 
0\rightarrow & B_{n-1}^{\prime \prime } & \rightarrow & P_{n-1}^{\prime
\prime } & \rightarrow & H_{n-1}^{\prime \prime } & \rightarrow 0 \\ 
& \downarrow &  & \downarrow &  & \downarrow &  \\ 
& 0 &  & 0 &  & 0 & 
\end{array}%
$

with $\overline{B}_{n-1}=B_{n-1}\cap Z_{n-1}^{\prime }$ and $H_{n-1}^{\prime
\prime }$is finitely generated.

Therefore: the exact sequence: $0\rightarrow B_{n-1}^{\prime \prime }$:$%
\rightarrow $ $P_{n-1}^{\prime \prime }\rightarrow H_{n-1}^{\prime \prime
}\rightarrow 0$ is isomorphic to the direct sum of the exact sequences:

$0\rightarrow L_{n-1}$:$\rightarrow $ $Q_{n-1}^{\prime \prime }\rightarrow
H_{n-1}^{\prime \prime }\rightarrow 0$ and $0\rightarrow Q_{n-1}^{\prime
}\rightarrow Q_{n-1}^{\prime }\rightarrow 0\rightarrow 0$, with $%
Q_{n-1}^{\prime \prime }$ the projective cover of $H_{n-1}^{\prime \prime }$%
, hence finitely generated. Then $B_{n-1}^{\prime \prime }\cong
L_{n-1}\oplus Q_{n-1}^{\prime }$.

There is a commutative exact diagram:

$%
\begin{array}{ccccccc}
& 0 &  & 0 &  & 0 &  \\ 
& \downarrow &  & \downarrow &  & \downarrow &  \\ 
0\rightarrow & \overline{B}_{n-1} & \rightarrow & B_{n-1}^{\prime } & 
\rightarrow & L_{n-1} & \rightarrow 0 \\ 
& \downarrow &  & \downarrow &  & \downarrow &  \\ 
0\rightarrow & \overline{B}_{n-1} & \rightarrow & B_{n-1} & \rightarrow & 
L_{n-1}\oplus Q_{n-1}^{\prime } & \rightarrow 0 \\ 
& \downarrow &  & \downarrow &  & \downarrow &  \\ 
& 0 & \rightarrow & Q_{n-1}^{\prime } & \rightarrow & Q_{n-1}^{\prime } & 
\rightarrow 0 \\ 
&  &  & \downarrow &  & \downarrow &  \\ 
&  &  & 0 &  & 0 & 
\end{array}%
$

where $\overline{B}_{n-1}$and $L_{n-1}$ are finitely generated. It follows $%
B_{n-1}\cong $ $B_{n-1}^{\prime }\oplus Q_{n-1}^{\prime }$ with $%
B_{n-1}^{\prime }$finitely generated.

We have an exact sequence: $0\rightarrow B_{n-1}^{\prime }\oplus
Q_{n-1}^{\prime }\rightarrow P_{n-1}^{\prime }\oplus Q_{n-1}^{\prime }\oplus
Q_{n-1}^{\prime \prime }\rightarrow P_{n-2}.$

Taking the projective cover of $B_{n-1}^{\prime }$ we obtain an exact
sequence: $0\rightarrow Z_{n}^{\prime }\rightarrow P_{n}^{\prime
}\rightarrow B_{n-1}^{\prime }\rightarrow 0$. Therefore: $0\rightarrow
Z_{n}^{\prime }\rightarrow P_{n}^{\prime }\oplus Q_{n-1}^{\prime
}\rightarrow B_{n-1}^{\prime }\oplus Q_{n-1}^{\prime }\rightarrow 0$ is
exact.

As above, $P_{n}$ decomposes $P_{n}^{\prime }\oplus Q_{n-1}^{\prime }\oplus
P_{n}^{\prime \prime }.$

We have proved that $P$ decomposes in the direct sum of the complexes:

$.\rightarrow P_{n+1}\rightarrow P_{n}^{\prime }\oplus P_{n}^{\prime \prime
}\rightarrow P_{n-1}^{\prime }\oplus Q_{n-1}^{\prime \prime }\rightarrow
P_{n-2}...P_{1}\rightarrow P_{0}\rightarrow 0$

and $0\rightarrow Q_{n-1}^{\prime }\rightarrow Q_{n-1}^{\prime }\rightarrow
0....\rightarrow 0\rightarrow 0$, where $P_{n-1}^{\prime }\oplus
Q_{n-1}^{\prime \prime }$ is finitely generated.
\end{proof}

With the same hypothesis as in the previous lemma, let $X\in
D_{fg}^{b}(Gr_{A})$, we can choose a projective resolution of finitely
generated projective graded modules: $P\rightarrow X$ such that the
differential map $d_{j}:$ $P_{j}\rightarrow P_{j-1}$ has image contained in
the radical of $P_{j-1}$ .

Hence the complex $Hom_{A}(P$, $A_{0}):$

$0\rightarrow $ $Hom_{A}(P_{0}$, $A_{0})\rightarrow Hom_{A}(P_{1}$, $%
A_{0})\rightarrow ...Hom_{A}(P_{n}$, $A_{0})\rightarrow ...$ has zero
differential.

It follows $Ext_{A}^{k}(X,A_{0})=Hom_{A}(P_{k},A_{0})\neq 0$ and $%
Ext_{A}(X,A_{0})\neq 0$.

It follows $Ext$-$regular(X)\neq -\infty $, but $Ext$-$regular(X)=\infty $
is possible.

Assume $Ext$-$regular(X)=r$ is finite.

There is a left decomposition of $A$ in indecomposable summands: $A=\underset%
{i=1}{\overset{m}{\oplus }}Q_{i}$ and of each projective $P_{j}=\underset{i=1%
}{\overset{n}{\oplus }}Q_{i}^{(m_{i})}[-n_{i}^{j}]$ with $m_{i}\geq 0$ and $%
n_{i}^{j}$ integers.

Then $Ext_{A}^{j}(X,A_{0})=Hom_{A}(P_{j},A_{0})=\underset{i=1}{\overset{n}{%
\oplus }}D(Q_{i}/rQ_{i})^{(m_{i})}[n_{i}^{j}]$.

Therefore: $Hom_{A}(P_{j},A_{0})_{k}\neq 0$ if and only if for some $i$, $%
n_{i}^{j}+k=0$. Since the resolution is subdiagonal, $n_{i}^{j}\geq j$.

By definition $Ext_{A}^{j}(X,A_{0})_{\leq -r-1-j}=0$, this means $-r-j\leq
-n_{i}^{j}$ or $r\geq n_{i}^{j}-j$, for all $i$ and $r^{\prime }=\max
\{n_{i}^{j}-j\}$, exists.

Then $Ext_{A}^{j}(X,A_{0})_{\leq -r^{\prime }-j-1}=0$ and $%
Ext_{A}^{j}(X,A_{0})_{-(n_{j}^{i}-j)-j}\neq 0$.

We have proved $Ext$- $reg(X)=r=\max \{n_{i}^{j}-j\}$.

Let $P:...\rightarrow P_{n+1}\rightarrow P_{n}\rightarrow P_{n-1}\rightarrow
...P_{1}\rightarrow P_{0}\rightarrow A_{0}\rightarrow 0$ and $P^{\prime }:$ $%
...\rightarrow P_{n+1}^{\prime }\rightarrow P_{n}^{\prime }\rightarrow
P_{n-1}^{\prime }\rightarrow ...P_{1}^{\prime }\rightarrow P_{0}^{\prime
}\rightarrow A_{0}\rightarrow 0$ be minimal projective resolutions of $A_{0}$
as left and as right module, respectively.

Each $P_{j}$ has a decomposition $P_{j}=\underset{i=1}{\overset{m}{\oplus }}%
Q_{i}^{(m_{i})}[-n_{i}^{j}]$ and $Tor_{n}^{A}(A_{0},A_{0})$ is computed as
the $nth$-homology of the complex $A_{0}\otimes _{A}P:$

$...\rightarrow A_{0}\otimes _{A}P_{n+1}\rightarrow A_{0}\otimes
_{A}P_{n}\rightarrow A_{0}\otimes _{A}P_{n-1}\rightarrow ...A_{0}\otimes
_{A}P_{1}\rightarrow A_{0}\otimes _{A}P_{0}\rightarrow 0$ and $A_{0}\otimes
_{A}P_{n}=A_{0}\otimes _{A}\underset{i=1}{\overset{m}{\oplus }}%
Q_{i}^{(m_{i})}[-n_{i}^{n}]=$ $A/\mathfrak{m}\underset{i=1}{\otimes _{A}%
\overset{m}{\oplus }}Q_{i}^{(m_{i})}[-n_{i}^{n}]$ $\cong \underset{i=1}{%
\overset{m}{\oplus }}(Q_{i}/\mathfrak{m}Q_{i})^{(m_{i})}[-n_{i}^{n}]$ $\cong 
\underset{i=1}{\overset{m}{\oplus }}(S_{i})^{(m_{i})}[-n_{i}^{n}]$ and the
differential of $A_{0}\otimes _{A}P$ is zero.

Using the second resolution $Tor_{n}^{A}(A_{0},A_{0})$ is the $nth$-homology
of the complex $P^{\prime }\otimes _{A}A_{0}:$

$...\rightarrow P_{n+1}^{\prime }\otimes _{A}A_{0}\rightarrow P_{n}^{\prime
}\otimes _{A}A_{0}\rightarrow P_{n-1}^{\prime }\otimes _{A}A_{0}\rightarrow
...P_{1}^{\prime }\otimes _{A}A_{0}\rightarrow P_{0}^{\prime }\otimes
_{A}A_{0}\rightarrow 0$

Each $P_{j}^{\prime }$ has a decomposition $P_{j}^{\prime }=\underset{i=1}{%
\overset{m}{\oplus }}Q_{i}^{\prime (m_{i})}[-n_{i}^{\prime j}]$ and $%
P_{n}^{\prime }\otimes _{A}A_{0}=$\linebreak $\underset{i=1}{(\overset{m}{%
\oplus }}Q_{i}^{\prime (m_{i})}[-n_{i}^{\prime j}])\otimes _{A}A_{0}=%
\underset{i=1}{\overset{m}{\oplus }}(Q_{i}^{\prime }/(Q_{i}^{\prime })%
\mathfrak{m}^{(m_{i})}[-n_{i}^{\prime j}]$ $\cong \underset{i=1}{\overset{m}{%
\oplus }}(S_{i}^{\prime })^{(m_{i})}[-n_{i}^{\prime j}]$ and the
differential of $P^{\prime }\otimes _{A}A_{0}$ is zero.

It follows $n_{i}^{j}=n_{i}^{\prime j}$ for all $i$.

By the above remark, $Ext$- $reg_{A}A_{0}=Ext$- $regA_{0A}=Ext$- $regA_{0}.$

We write this as a theorem.

\begin{theorem}
Let $A$ be a locally finite $\Bbbk $-algebra. Then $Ext$- $reg_{A}A_{0}=Ext$%
- $regA_{0A}=Ext$- $regA_{0}$.
\end{theorem}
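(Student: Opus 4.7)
The plan is to extract the equality of regularities from the numerical data already assembled in the discussion preceding the theorem, via a two-sided computation of $\operatorname{Tor}$. First I would take minimal finitely generated graded projective resolutions $P \to A_0$ on the left and $P' \to A_0$ on the right; these exist because $A$ is locally finite, and by the preceding subdiagonal resolution lemma they can be taken subdiagonal. Minimality forces each differential to map into the radical, so after tensoring with $A_0$ on either side the complexes $A_0 \otimes_A P$ and $P' \otimes_A A_0$ have zero differential, yielding identifications of graded $\Bbbk$-vector spaces
$$\operatorname{Tor}_n^A(A_0, A_0) \;\cong\; A_0 \otimes_A P_n \;\cong\; P_n' \otimes_A A_0.$$

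Next I would decompose $P_n = \bigoplus_{i=1}^m Q_i^{(m_i)}[-n_i^n]$ and $P_n' = \bigoplus_{i=1}^m (Q_i')^{(m_i')}[-n_i^{\prime n}]$ into indecomposable graded summands, whence
$$A_0 \otimes_A P_n \cong \bigoplus_i S_i^{(m_i)}[-n_i^n], \qquad P_n' \otimes_A A_0 \cong \bigoplus_i (S_i')^{(m_i')}[-n_i^{\prime n}].$$
Comparing these two descriptions degree by degree, after matching $S_i$ with the corresponding right simple $S_i'$ via the Nakayama-type bijection between indecomposable projective summands of $A$, I would read off $m_i = m_i'$ and, more importantly, the equality of the multisets of shifts $\{n_i^n\}_i = \{n_i^{\prime n}\}_i$ in every homological degree $n$.

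Finally I would invoke the formula $\mathrm{Ext}\text{-}\mathrm{reg}(X) = \max\{n_i^j - j\}$ derived immediately before the theorem statement. Applied with $X = A_0$ on the left and on the right (in each case using the subdiagonal finitely generated projective resolution furnished by the preceding lemma), this formula expresses both $\mathrm{Ext}\text{-}\mathrm{reg}_A A_0$ and $\mathrm{Ext}\text{-}\mathrm{reg}\, A_{0A}$ as the same maximum over the same multiset of integers, proving the claim.

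The only real obstacle is bookkeeping: one has to pin down the bijection between the indecomposable summands of $A$ on the left and on the right, and check that after this identification the shifts $n_i^j$ match individually (not merely as unlabeled multisets). Once that correspondence is made explicit, the equality is essentially a restatement of the symmetry of $\operatorname{Tor}$ under swapping its two arguments, combined with the vanishing of the differential in a tensor product of a minimal resolution with the semisimple module $A_0$.
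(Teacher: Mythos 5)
Your proposal is correct and follows essentially the same route as the paper: the paper's own argument (given in the text immediately preceding the theorem) likewise computes $\mathrm{Tor}_n^A(A_0,A_0)$ from minimal left and right projective resolutions, uses minimality to kill the differentials of $A_0\otimes_A P$ and $P'\otimes_A A_0$, matches the shift data $n_i^j=n_i^{\prime j}$, and concludes via the formula $\mathrm{Ext}\text{-}\mathrm{reg}(X)=\max\{n_i^j-j\}$. Your closing remark about pinning down the bijection of indecomposable summands is a fair point of care, but the paper glosses over it in exactly the same way.
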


We next have:

\begin{theorem}
Let $A$ be a noetherian graded AS Gorenstein algebra of finite local
cohomology dimension. Given $X\in D_{fg}^{b}(Gr_{A})$, $X\neq 0$. Then $Ext$%
- $reg(X)\leq CMreg(X)+Ext$- $regA_{0}$.
\end{theorem}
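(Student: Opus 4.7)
The plan is to specialise Proposition 4 to $Y=A_{0}$, obtaining
\[
RHom_{A}(R\Gamma _{\mathfrak{m}}(X),A_{0})\cong RHom_{A}(X,A_{0}),
\]
and then to extract degree information from the left hand side via the hypercohomology spectral sequence of Lemma 4. Since $A$ has finite local cohomology dimension, $R\Gamma _{\mathfrak{m}}(X)\in D^{b}(Gr_{A})\subseteq D^{-}(Gr_{A})$, and Lemma 4 gives a convergent spectral sequence
\[
E_{2}^{p,-q}=Ext_{A}^{p}(\Gamma _{\mathfrak{m}}^{q}(X),A_{0})\Longrightarrow Ext_{A}^{p-q}(X,A_{0}).
\]
So, in order to bound the degrees of $Ext_{A}^{N}(X,A_{0})$, it suffices to bound the degrees of each $Ext_{A}^{N+q}(\Gamma _{\mathfrak{m}}^{q}(X),A_{0})$ for $q\geq 0$.

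Write $r=CMreg(X)$ and $s=Ext$-$reg\,A_{0}$. By the very definition of $CMreg$, each $\Gamma _{\mathfrak{m}}^{q}(X)$ is concentrated in internal degrees $\leq r-q$. To convert this into a bound on Ext, I use the Matlis-type identification $Ext_{A}^{p}(M,A_{0})\cong D(Tor_{p}^{A}(A_{0},M))$, which follows from the adjunction $RHom_{A}(M,D(-))\cong D(-\overset{L}{\otimes }_{A}M)$ together with the self-duality $D(A_{0})\cong A_{0}$ (valid because $A_{0}=A/\mathfrak{m}$ is finite dimensional and semisimple). Tor is then computed by resolving $A_{0}$ as a right $A$-module: by the preceding theorem the right $Ext$-regularity of $A_{0}$ is also $s$, so a minimal graded projective resolution $P_{\bullet }\to A_{0}$ in $Gr_{A^{op}}$ has $P_{j}=\oplus _{i}A[-n_{i}^{j}]^{(c_{i}^{j})}$ with $n_{i}^{j}\leq s+j$. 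Hence $P_{j}\otimes _{A}\Gamma _{\mathfrak{m}}^{q}(X)$ is supported in internal degrees $\leq (r-q)+(s+j)$, so the subquotient $Tor_{j}^{A}(A_{0},\Gamma _{\mathfrak{m}}^{q}(X))$ is supported there as well. Taking $j=N+q$ and dualising flips the bound, giving
\[
Ext_{A}^{N+q}(\Gamma _{\mathfrak{m}}^{q}(X),A_{0})_{\leq -(r+s+N)-1}=0.
\]
Feeding this back into the spectral sequence yields $Ext_{A}^{N}(X,A_{0})_{\leq -(r+s)-1-N}=0$ for every $N$, which is the claimed inequality $Ext$-$reg(X)\leq r+s$.

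The step I expect to be the main obstacle is the $Ext$--$Tor$ identification. The module $\Gamma _{\mathfrak{m}}^{q}(X)$ is locally finite and bounded above in internal degree, but typically \emph{not} finitely generated --- it is only a direct limit of its finite-length truncations from below --- so one cannot attack it through its own projective resolution. The point of passing to the Tor side is precisely to replace that absent resolution by a finitely generated resolution of $A_{0}$, whose generator degrees are controlled symmetrically on the left and on the right by the preceding theorem, so that $\Gamma _{\mathfrak{m}}^{q}(X)$ only enters the final bound through the crude degree upper bound supplied by $CMreg(X)$.
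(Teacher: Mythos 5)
Your argument is correct and follows the paper's own proof essentially step for step: Proposition 4 to replace $X$ by $R\Gamma _{\mathfrak{m}}(X)$, the hyperext spectral sequence to reduce to the individual $\Gamma _{\mathfrak{m}}^{q}(X)$, the definition of $CMreg$ to bound their internal degrees, and the two-sidedness of $Ext$-$reg\,A_{0}$ to control the generator degrees of the resolution of $A_{0}$. Your passage through $D(Tor_{\bullet }^{A}(A_{0},-))$ is merely the adjoint form of the paper's computation of $Hom_{A}(-,I^{m})$ against the $\Bbbk $-dual of the minimal projective resolution of $A_{0}$, so the two degree bounds are the same calculation.
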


\begin{proof}
We proved above $CMreg(X)\neq -\infty $. If $Ext$- $regA_{0}=\infty $, then
the inequality is trivially satisfied.

We may assume $Ext$- $regA_{0}=r$ is finite .

Let $P\rightarrow A_{0}$ be a minimal projective resolution. Changing
notation,

$P:$ $...P^{(n+1)}\rightarrow $ $P^{(n)}\rightarrow ...P^{(1)}\rightarrow
P^{(0)}\rightarrow 0$

where $P^{(m)}=\oplus P_{j}^{(m)}[-\sigma _{m,j}]$ and $\sigma _{m,j}\leq
r+m.$

Dualizing, we obtain an injective resolution $I$ with $I^{m}=\oplus
D(P_{j}^{(m)})[\sigma _{m,j}]$, of $A_{0}$ as right module.

Let $p$ be $p=CMreg(X)$, $Z=R\Gamma _{\mathfrak{m}}(X)$ and denote by $%
h^{-n} $ the homology. Then by definition we have:

$h^{-n}(Z)_{\geq p+1+n}=h^{-n}(R\Gamma _{\mathfrak{m}}(X))_{\geq
p+1+n}=\Gamma _{\mathfrak{m}}^{-n}(X)_{\geq p+1+n}=0$ for all $n$.
Therefore: ($h^{-n}(Z))_{\leq -p-1-n}^{\prime }=0$.

But $Ext_{A}^{m}(h^{-n}(Z)$, $A_{0})$ is a subquotient of $Hom_{A}($ $%
h^{-n}(Z)$, $I^{m})=Hom_{A}($ $h^{-n}(Z)$, $\oplus D(P_{j}^{(m)})[\sigma
_{m,j}]$)=$\oplus Hom_{A}($ $h^{-n}(Z)$, $D(P_{j}^{(m)})[\sigma _{m,j}]$)$%
\cong $

$\oplus Hom_{\Bbbk }($ $(P_{j}^{(m)})^{\ast }\otimes h^{-n}(Z),\Bbbk
)[\sigma _{m,j}]\cong \oplus Hom_{\Bbbk }($ $(e_{j}h^{-n}(Z),\Bbbk )[\sigma
_{m,j}]$ with $e_{j}$ the idempotent corresponding to $P_{j}^{(m)}$.

Since ($h^{-n}(Z))_{\leq -p-1-n}^{\prime }=0$, it follows $Hom_{\Bbbk }($ $%
(e_{j}h^{-n}(Z),\Bbbk )_{\leq -p-1-n}=0$.

Observe that the truncation of a shifted module $M[k]_{\leq -t-k}=M_{\leq
-t}[k]$ .

Therefore: $Ext_{A}^{m}(h^{-n}(Z)$, $A_{0})_{\leq }-p-1-n-r-m=0.$

We have a converging spectral sequence:

$E_{2}^{m,n}=Ext_{A}^{m}(h^{-n}(Z)$, $A_{0})\Longrightarrow Ext_{A}^{m+n}(Z$%
, $A_{0})$.

This means $Ext_{A}^{m+n}(Z$, $A_{0})$ is a subquotient of $%
E_{2}^{m,n}=Ext_{A}^{m}(h^{-n}(Z)$, $A_{0})$ and $Ext_{A}^{m}(h^{-n}(Z)$, $%
A_{0})_{\leq }-p-1-r-(n+m)=0$ implies $Ext_{A}^{q}(Z$, $A_{0})_{\leq
-p-1-r-q}=0$ for all $q$.

We have isomorphisms: $Ext_{A}^{q}(Z$, $A_{0})=Ext_{A}^{q}(R\Gamma _{%
\mathfrak{m}}(X)$, $A_{0})=$\linebreak $H^{q}(RHom(R\Gamma _{\mathfrak{m}%
}(X),A_{0}))\cong H^{q}(RHom(X,A_{0}))=Ext_{A}^{q}(X,A_{0})$.

Therefore: $Ext_{A}^{q}(X,A_{0})_{\leq -p-1-r-q}=0$.

This implies $Ext$-$reg(X)\leq p+r=CMreg(X)+Ext$- $regA_{0}$.
\end{proof}

\begin{corollary}
Assume the same conditions as in the theorem and $Ext$- $regA_{0}$ finite.
Then for any $X\in D_{fg}^{b}(Gr_{A})$, $Ext$- $reg(X)$ is finite.
\end{corollary}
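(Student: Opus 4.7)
The plan is to deduce the corollary directly from the preceding theorem together with the finiteness facts already established in this section. From the theorem we have the inequality
\[
Ext\text{-}reg(X) \leq CMreg(X) + Ext\text{-}regA_{0},
\]
so it suffices to show that the right-hand side is finite, and to rule out the value $-\infty$ for $Ext\text{-}reg(X)$.

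First I would invoke the hypothesis that $Ext\text{-}regA_{0}$ is finite, so that term contributes a finite integer. Next, I would recall the observation made just after the Castelnovo--Mumford regularity definition: for any $X\in D_{fg}^{b}(Gr_{A})$ over a noetherian graded AS Gorenstein algebra of finite local cohomology dimension, the complex $R\Gamma_{\mathfrak{m}}(X)$ has finitely cogenerated homology in a bounded range, so that both $CMreg(X)\neq \infty$ and $CMreg(X)\neq -\infty$. Thus $CMreg(X)$ is a finite integer, and the sum $CMreg(X)+Ext\text{-}regA_{0}$ is finite.

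Combining the two bounds, $Ext\text{-}reg(X)\leq CMreg(X)+Ext\text{-}regA_{0}<\infty$, so in particular $Ext\text{-}reg(X)\neq \infty$. To finish, I would recall the earlier remark that taking a subdiagonal finitely generated projective resolution $P\rightarrow X$ (which exists by the lemma above) and applying $Hom_{A}(-,A_{0})$ yields a complex with zero differential and nonzero terms (provided $X\neq 0$), so that $Ext_{A}(X,A_{0})\neq 0$ and hence $Ext\text{-}reg(X)\neq -\infty$. Therefore $Ext\text{-}reg(X)$ is a finite integer, as required.

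There is really no serious obstacle here: the entire content of the corollary is a bookkeeping consequence of the theorem and the finiteness of $CMreg$ proved earlier. The only point to be careful about is remembering to separately exclude $-\infty$, which follows from the subdiagonality of the chosen projective resolution.
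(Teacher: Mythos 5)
Your proposal is correct and follows essentially the same route as the paper, whose entire proof is the one-line observation that $CMreg(X)$ is finite (established in the remarks before the definition of Ext-regularity), combined with the theorem's inequality $Ext\text{-}reg(X)\leq CMreg(X)+Ext\text{-}regA_{0}$. Your additional step of excluding $-\infty$ via the subdiagonal projective resolution is exactly the remark the paper makes just before stating these theorems, so you have simply made explicit what the paper leaves implicit.
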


\begin{proof}
This follows from the above remark that $CMreg(X)$ is finite.
\end{proof}

Interchanging the roles of Ext-regular and CM-regular we obtain in the next
result a similar inequality.

\begin{theorem}
Let $A$ be a noetherian AS Gorenstein algebra of finite local cohomology
dimension. Given $X\in D_{fg}^{b}(Gr_{A})$, $X\neq 0$. Then $CMreg(X)\leq
Ext $-$reg(X)+CMregA$.
\end{theorem}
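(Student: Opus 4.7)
The plan is to mirror the proof of Theorem 2 with the roles of $Ext$-regularity and $CM$-regularity swapped: in place of the spectral sequence of Lemma 2, I use the $Tor$-to-local-cohomology spectral sequence of Lemma 3. If $Ext$-$reg(X)=\infty$ the inequality is vacuous, so I assume $r:=Ext$-$reg(X)$ is finite and set $q:=CMreg\,A$. First I would invoke the earlier lemma to pick a subdiagonal finitely generated projective resolution $P\to X$ with $P_k=\bigoplus_i Q_i^{(m_i)}[-n_i^k]$; the analysis preceding Theorem 1 identifies $r=\max_{i,k}\{n_i^k-k\}$, so $n_i^k\le r+k$ for every $i,k$.

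Next I would use the AS Gorenstein hypothesis to collapse the spectral sequence. Applying Definition 3(i) and a length induction on the finite-length modules $A/A_{\ge k}$ gives $Ext_A^j(A/A_{\ge k},A)=0$ for $j$ different from the graded injective dimension $n_0$, and hence $\Gamma_{\mathfrak{m}^{op}}^j(A)=0$ for $j\ne n_0$. The $E_2$-page of the spectral sequence of Lemma 3 is therefore concentrated in the single row $n=n_0$; every $d_r$ with $r\ge 2$ has one endpoint off that row, so the sequence degenerates and each total degree is hit by at most one nonzero $E_\infty$-term. This produces
\[
\Gamma_{\mathfrak{m}}^{n_0-k}(X)\;\cong\;Tor_k^A(\Gamma_{\mathfrak{m}^{op}}^{n_0}(A),X)\qquad(k\ge 0),
\]
together with $\Gamma_{\mathfrak{m}}^j(X)=0$ for $j>n_0$.

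The remaining step is a direct degree computation. The equality $CMreg({}_AA)=CMreg(A_A)=q$ noted in the paragraph preceding Definition 4 shows that $\Gamma_{\mathfrak{m}^{op}}^{n_0}(A)$ is supported in degrees $\le q-n_0$; tensoring with $P_k$, each summand $\Gamma_{\mathfrak{m}^{op}}^{n_0}(A)\otimes_A Q_i[-n_i^k]$ lives in degrees $\le(q-n_0)+n_i^k\le q+r+k-n_0$, and so does $Tor_k^A(\Gamma_{\mathfrak{m}^{op}}^{n_0}(A),X)$. Substituting $j=n_0-k$ yields $\Gamma_{\mathfrak{m}}^j(X)_{\ge(q+r)+1-j}=0$ for every $j$, which is exactly $CMreg(X)\le q+r=Ext$-$reg(X)+CMreg\,A$. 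The main delicate point is the spectral sequence bookkeeping -- verifying that row-concentration at $E_2$ both kills the higher differentials and trivializes the abutment filtration -- after which the degree-tracking is routine.
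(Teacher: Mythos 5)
Your proof is correct and follows essentially the same route as the paper: reduce to $r=Ext\text{-}reg(X)$ finite, take a subdiagonal finitely generated projective resolution with shifts bounded by $r+k$, feed it into the Tor-to-local-cohomology spectral sequence of Lemma 3, and track degrees against $CMreg\,A$. The only difference is that you additionally observe the $E_{2}$-page collapses to the single row $n=n_{0}$ (valid, since $\Gamma_{\mathfrak{m}^{op}}^{j}(A)=0$ for $j\neq n_{0}$ in the AS Gorenstein case), whereas the paper obtains the same vanishing without degeneration by noting that each graded piece of the abutment is a subquotient of an $E_{2}$-term.
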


\begin{proof}
Since we know $CMregA\neq -\infty $, the assumption $Ext$-$reg(X)=\infty $
gives the inequality and we can assume $Ext$-$reg(X)=r$ is finite.

As before, there is a projective resolution $P\rightarrow X$ of $X$ with $%
P^{(m)}=\oplus P_{j}^{(m)}[-\sigma _{m,j}]$ and $\sigma _{m,j}\leq r+m.$

Let $p$ be $p=CMreg_{A}A=CMregA_{A}$. Then by definition $\Gamma _{\mathfrak{%
m}^{op}}^{n}(A)_{\geq p+1-n}=0$ for all $n$.

Tor$_{-m}^{A}(\Gamma _{\mathfrak{m}^{op}}^{n}(A)$,$X)$ is a subquotient of $%
\Gamma _{\mathfrak{m}^{op}}^{n}(A)\otimes _{A}P^{(-m)}=\oplus \Gamma _{%
\mathfrak{m}^{op}}^{n}(A)\otimes _{A}P_{j}^{(-m)}[-\sigma _{-m,j}]=\oplus
\Gamma _{\mathfrak{m}^{op}}^{n}(A)e_{j}[-\sigma _{-m,j}]$ with $e_{j}$ the
idempotent corresponding to $P_{j}^{(-m)}$ and $\sigma _{-m,j}\leq r-m.$

Therefore: $\Gamma _{\mathfrak{m}^{op}}^{n}(A)[-\sigma _{-m,j}]_{\geq
p+1-n+(r-m)}=0.$

As above, it follows Tor$_{-m}^{A}(\Gamma _{\mathfrak{m}^{op}}^{n}(A)$,$%
X)_{\geq p+1-n+r-m}=0$

The spectral sequence $E_{2}^{-m.n}=$Tor$_{-m}^{A}(\Gamma _{\mathfrak{m}%
^{op}}^{n}(A)$,$X)\Longrightarrow \Gamma _{\mathfrak{m}}^{-m+n}(X)$
converges (Lemma 3).

Hence $\Gamma _{\mathfrak{m}}^{m+n}(X)$ is a subquotient of Tor$%
_{-m}^{A}(\Gamma _{\mathfrak{m}^{op}}^{n}(A)$,$X)$ and it follows\linebreak\ 
$\Gamma _{\mathfrak{m}}^{q}(X)_{\geq p+1+r-q}=0.$

We have proved $CMreg(X)\leq p+r=Ext$-$reg(X)+CMregA$.
\end{proof}

\begin{remark}
The algebra $A$ is Koszul if and only if $Ext$-$regA_{0}=0$.
\end{remark}

\begin{corollary}
Assume the same conditions on $A$ as in the theorem and in addition $A$
Koszul and $CMregA=0$. Then $Ext$- $reg(X)=CMreg(X)$.
\end{corollary}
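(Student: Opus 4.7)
The plan is to simply combine the two inequalities just established with the remark that Koszulness is equivalent to $Ext$-$regA_0=0$. So the proof is essentially a one-line argument.

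First, I would invoke the remark preceding the corollary: since $A$ is Koszul, $Ext$-$regA_0 = 0$. Then the previous theorem ($Ext$-$reg(X)\leq CMreg(X)+Ext$-$regA_0$) immediately yields $Ext$-$reg(X)\leq CMreg(X)$.

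Next, I would apply the other theorem ($CMreg(X)\leq Ext$-$reg(X)+CMregA$) with the hypothesis $CMregA=0$ to obtain the reverse inequality $CMreg(X)\leq Ext$-$reg(X)$.

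Combining the two inequalities gives the equality $Ext$-$reg(X)=CMreg(X)$. The only subtlety to check is that the finiteness assumptions implicit in manipulating $\infty$ symbols are consistent: by the corollary just proved, $Ext$-$regA_0$ finite and $CMreg(X)$ finite (which holds for $X\in D_{fg}^{b}(Gr_A)$, $X\neq 0$, as remarked above the Ext-regularity definition) imply $Ext$-$reg(X)$ is finite, and likewise $CMregA=0$ finite combined with finite $Ext$-$reg(X)$ gives finite $CMreg(X)$, so no $\infty=\infty$ issue arises. There is no real obstacle here — the work has all been done in the two preceding theorems and the Koszul characterization remark.
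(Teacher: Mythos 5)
Your proof is correct and matches the paper's intended argument exactly: the paper states this corollary without proof, leaving implicit precisely the combination of the two inequalities $Ext$-$reg(X)\leq CMreg(X)+Ext$-$regA_{0}$ and $CMreg(X)\leq Ext$-$reg(X)+CMregA$ with $Ext$-$regA_{0}=0$ (Koszulness) and $CMregA=0$. Your extra check that no $\infty$ ambiguities arise is a sensible addition but not a departure from the paper's approach.
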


We have all the ingredients to prove the main theorem of the section.

\begin{theorem}
Let $A$ be a noetherian AS Gorenstein algebra of finite local cohomology
dimension. Assume $A$ Koszul and let $M$ be a finitely generated graded $A$%
-module . Then for $s\geq CMregM$, the projective resolution of $M_{\geq
s}[s]$ is linear.
\end{theorem}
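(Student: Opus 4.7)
The strategy is to apply the inequality $Ext$-$reg(X)\leq CMreg(X)+Ext$-$regA_{0}$ established above, together with the remark that $A$ Koszul forces $Ext$-$regA_{0}=0$. This reduces the claim to proving $CMreg(M_{\geq s}[s])\leq 0$; linearity of the minimal projective resolution will then follow from the paper's identification $Ext$-$reg(N)=\max\{n_{i}^{j}-j : m_{i}^{j}>0\}$ applied to $N := M_{\geq s}[s]$.

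For the key estimate, set $p=CMreg\,M$. Since $A$ is locally finite and $M$ is finitely generated, the quotient $M/M_{\geq s}=M_{<s}$ has finite length; in particular $\Gamma_{\mathfrak{m}}^{0}(M/M_{\geq s})=M/M_{\geq s}$ is concentrated in degrees $<s$, while $\Gamma_{\mathfrak{m}}^{i}(M/M_{\geq s})=0$ for $i\geq 1$. Applying local cohomology to the short exact sequence $0\to M_{\geq s}\to M\to M/M_{\geq s}\to 0$ yields $\Gamma_{\mathfrak{m}}^{i}(M_{\geq s})\cong \Gamma_{\mathfrak{m}}^{i}(M)$ for $i\geq 2$, an inclusion $\Gamma_{\mathfrak{m}}^{0}(M_{\geq s})\hookrightarrow \Gamma_{\mathfrak{m}}^{0}(M)$, and a short exact sequence
\[
0\to C\to \Gamma_{\mathfrak{m}}^{1}(M_{\geq s})\to \Gamma_{\mathfrak{m}}^{1}(M)\to 0,
\]
where $C$ is the cokernel of $\Gamma_{\mathfrak{m}}^{0}(M)\to M/M_{\geq s}$ and hence is concentrated in degrees $<s$. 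Using $\Gamma_{\mathfrak{m}}^{i}(M)_{\geq p+1-i}=0$ and $s\geq p$, a direct degree chase shows $\Gamma_{\mathfrak{m}}^{i}(M_{\geq s})_{\geq s+1-i}=0$ for every $i$. Hence $CMreg(M_{\geq s})\leq s$, so $CMreg(N)\leq 0$.

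Combining with the Koszul hypothesis, $Ext$-$reg(N)\leq 0$. Since $N$ is finitely generated in degrees $\geq 0$, its minimal graded projective resolution $\cdots \to P_{j}\to \cdots \to P_{0}\to N\to 0$ is automatically subdiagonal: writing $P_{j}=\bigoplus_{i} Q_{i}^{(m_{i}^{j})}[-n_{i}^{j}]$, one has $n_{i}^{j}\geq j$ whenever $m_{i}^{j}>0$, by induction on $j$ using that each minimal syzygy sits inside the graded radical of the previous projective. The formula $Ext$-$reg(N)=\max\{n_{i}^{j}-j:m_{i}^{j}>0\}$ recalled above, together with $Ext$-$reg(N)\leq 0$ and the subdiagonal bound $n_{i}^{j}\geq j$, forces $n_{i}^{j}=j$ for every $(i,j)$ with $m_{i}^{j}>0$. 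This is exactly the statement that the resolution of $M_{\geq s}[s]$ is linear.

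I anticipate the main obstacle to be the $i=1$ case of the degree chase: both $C$ and $\Gamma_{\mathfrak{m}}^{1}(M)$ contribute to $\Gamma_{\mathfrak{m}}^{1}(M_{\geq s})$, and the hypothesis $s\geq p$ must be used twice, once to confine $C$ to degrees $<s$ and once to confine $\Gamma_{\mathfrak{m}}^{1}(M)$ to degrees $<p\leq s$; everything else is bookkeeping that feeds directly into the comparison theorems already proved.
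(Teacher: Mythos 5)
Your proposal is correct and follows essentially the same route as the paper: reduce to $CMreg(M_{\geq s})\leq s$ via the inequality $Ext$-$reg(X)\leq CMreg(X)+Ext$-$regA_{0}$ with $Ext$-$regA_{0}=0$ for Koszul $A$, then run the long exact sequence of local cohomology on $0\to M_{\geq s}\to M\to M/M_{\geq s}\to 0$. The only cosmetic difference is that you assert $\Gamma_{\mathfrak{m}}^{i}(M/M_{\geq s})=0$ for $i\geq 1$ directly as a standard fact about finite-length (torsion) modules, whereas the paper derives it from its local duality formula and the AS Gorenstein hypothesis; both justifications are valid here.
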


\begin{proof}
Assume $M_{\geq s}[s]\neq 0$ and let $P^{(n+1)}\rightarrow $ $%
P^{(n)}\rightarrow ...P^{(1)}\rightarrow P^{(0)}\rightarrow M_{\geq
s}[s]\rightarrow 0$ be the projective resolution. The module $M_{\geq s}[s]$
is generated in degree zero and $P^{(m)}$ decomposes as $P^{(m)}=\oplus
P_{j}^{(m)}[-\sigma _{m,j}\ ]$ and $m\leq \sigma _{m,j}$.

We most prove $P^{(m)}$ does not have generators in degrees larger than $m$,
or equivalently $Ext$- $reg(M_{\geq s}[s])\leq 0$, which will follow from
the above inequalities once we prove $CMreg(M_{\geq s}[s])\leq 0$ or
equivalently, $CMreg(M_{\geq s})\leq s$, this is:

$\Gamma _{\mathfrak{m}}^{m}(M_{\geq s})_{\geq s+1-m}=0$.

The module $L=M/M_{\geq s}$ is of finite length. By the local cohomology
formula, $\underset{k}{\underrightarrow{\lim }}Ext_{A}^{j}(A/\mathfrak{m}%
^{k} $,$L)=D(Ext_{A}^{n-j}(L,D(\Gamma _{\mathfrak{m}^{{}}}^{n}(A)))$.

Since $A$ is graded AS Gorenstein $Ext_{A}^{n-j}(L,D(\Gamma _{\mathfrak{m}%
^{{}}}^{n}(A))=0$ for $j\neq n$. It follows $\Gamma _{\mathfrak{m}%
}^{j}(M/M_{\geq s})=\left\{ 
\begin{array}{ccc}
0 & \text{if} & j\neq s \\ 
M/M_{\geq s} & \text{if} & j=s%
\end{array}%
\right. $

The exact sequence: $0\rightarrow M_{\geq s}\rightarrow M\rightarrow
M/M_{\geq s}\rightarrow 0$ induces a triangle $M_{\geq s}\rightarrow
M\rightarrow M/M_{\geq s}\rightarrow M_{\geq s}[1]$ , hence a triangle $%
R\Gamma _{\mathfrak{m}}(M_{\geq s})\rightarrow R\Gamma _{\mathfrak{m}%
}(M)\rightarrow R\Gamma _{\mathfrak{m}}(M/M_{\geq s})\rightarrow R\Gamma _{%
\mathfrak{m}}(M_{\geq s})[1]$, by the long homology sequence we obtain an
exact sequence:

$\rightarrow \Gamma _{\mathfrak{m}}^{m-1}(M/M_{\geq s})\rightarrow \Gamma _{%
\mathfrak{m}}^{m}(M_{\geq s})\rightarrow \Gamma _{\mathfrak{m}%
}^{m}(M)\rightarrow \Gamma _{\mathfrak{m}}^{m}(M/M_{\geq s})$

The inequality $s\geq $ $CMreg(M)$ implies $\Gamma _{\mathfrak{m}%
}^{m}(M)_{\geq s+1-m}=0$ for all $m$. Since $M/M_{\geq s}$ has length $s$, $%
\Gamma _{\mathfrak{m}}^{m}(M/M_{\geq s})_{\geq s+1-m}=0$ for all $m.$

It follows $\Gamma _{\mathfrak{m}}^{m}(M_{\geq s})_{\geq s+1-m}=0$ for all $%
m $.
\end{proof}

\section{Algebras AS Gorenstein and Koszul}

In this section we will use the main theorem of the last section in order to
extend a theorem by Bernstein-Gelfand-Gelfand, [3] which claims that for the
exterior algebra in $n$-variables $\Lambda $ there is an equivalence of
triangulated categories \underline{$gr$}$_{\Lambda }$ $\cong D^{b}(CohP_{n})$
from the stable category of finitely generated graded modules to the
category of bounded complexes of coherent sheaves on projective space $P_{n}$%
. The theorem was extended to finite dimensional Koszul algebras in $[15],$%
[16] see also [21]. We want to prove here a version of this theorem for AS
Gorenstein algebras of finite cohomological dimension. We will show that the
arguments used in [15] can be easily extended to this situation. We will
assume the reader is familiar with the results of [13], [15] and [17] and
the bibliography given there.

It was proved in [25] and [12] that a finite dimensional Koszul algebra $%
\Lambda $ is selfinjective if and only if its Yoneda algebra $\Gamma $ is
Artin Schelter regular [1]. The following generalization was proved in [13]
and [22] :

\begin{theorem}
A Koszul algebra $\Lambda $ is graded AS\ Gorenstein if and only if its
Yoneda algebra $\Gamma $ is graded AS Gorenstein.
\end{theorem}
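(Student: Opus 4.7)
The plan is to use Koszul duality between $\Lambda$ and $\Gamma$ as a bridge, translating each clause of the graded AS Gorenstein condition on one side to the corresponding clause on the other. Since Koszul duality is symmetric --- the Yoneda algebra of $\Gamma$ is again $\Lambda$ --- it suffices to prove one implication, say that $\Lambda$ graded AS Gorenstein forces $\Gamma$ graded AS Gorenstein; the reverse implication then follows by interchanging the roles of $\Lambda$ and $\Gamma$.

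First I would recall the standard Koszul correspondence. Because $\Lambda$ is Koszul, each simple $S_{i}$ has a minimal linear projective resolution $\cdots\to P^{(k)}\to\cdots\to P^{(0)}\to S_{i}\to 0$, and the multiplicities of the indecomposable summands of $P^{(k)}$ are precisely the dimensions of the graded components of $\Gamma e_{i}=\bigoplus_{k}\operatorname{Ext}^{k}_{\Lambda}(S_{i},\Lambda_{0})$. Applying $\operatorname{Hom}_{\Lambda}(-,\Lambda)$ to this resolution and interpreting the result under the Koszul regrading yields, up to a total shift, the complex computing $\operatorname{Ext}^{*}_{\Gamma}(T_{i},\Gamma)$ for the simple $\Gamma$-module $T_{i}$ corresponding to the vertex $i$. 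Next I would translate the three clauses of the AS Gorenstein definition. Clause (i), $\operatorname{Ext}^{j}_{\Lambda}(S_{i},\Lambda)=0$ for $j\ne n$, says the dualized linear complex has cohomology in a single homological degree, and via the translation this becomes the vanishing $\operatorname{Ext}^{j}_{\Gamma}(T_{i},\Gamma)=0$ for $j\ne n$. Clause (ii), that $\operatorname{Ext}^{n}_{\Lambda}(S_{i},\Lambda)=S'_{i}[-n_{i}]$ is a shifted simple $\Lambda^{op}$-module, translates to the statement that the unique non-vanishing $\operatorname{Ext}^{n}_{\Gamma}(T_{i},\Gamma)$ is a shifted simple $\Gamma^{op}$-module, which is precisely clause (ii) for $\Gamma$. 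Clause (iii) is then obtained by running the same argument with the roles of $\Lambda$ and $\Gamma$ reversed, legitimate because the analysis used only that $\Lambda$ is Koszul AS Gorenstein and all this data is available on the $\Gamma$-side once clauses (i) and (ii) have been established there.

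The main obstacle will be tracking the degree shifts $n_{i}$ and the Nakayama-type permutation $\sigma\colon S_{i}\mapsto S'_{\sigma(i)}$ encoded in clauses (ii) and (iii). Under Koszul duality this permutation transports to a dual permutation on the $\Gamma$-simples, and the shifts on both sides must agree coherently for the double-Ext clause (iii) to close up. To pin this down I would use the local-duality theorem of the previous section, which identifies $D(\Gamma^{n}_{\mathfrak{m}}(\Lambda))$ as the canonical dualizing object for $\Lambda$, together with the equality $\Gamma^{k}_{\mathfrak{m}}(\Lambda)=\Gamma^{k}_{\mathfrak{m}^{op}}(\Lambda)$ established earlier, which forces the left- and right-shift data to agree and thereby ensures that the permutation and the shifts match on both sides of Koszul duality.
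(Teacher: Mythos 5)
The paper does not actually prove this statement: it is quoted from the literature (``The following generalization was proved in [13] and [22]''), so there is no internal proof to compare yours against. Judged on its own terms, your outline picks a reasonable overall strategy --- Koszul duality plus the symmetry $E(E(\Lambda))\cong\Lambda$ to reduce to one implication --- but it has a genuine gap at its central step. The claim that applying $\operatorname{Hom}_{\Lambda}(-,\Lambda)$ to the minimal linear resolution of $S_{i}$ and ``interpreting the result under the Koszul regrading'' yields the complex computing $\operatorname{Ext}^{*}_{\Gamma}(T_{i},\Gamma)$ is not a formal consequence of Koszul duality; it is essentially the content of the theorem. The dual complex $\operatorname{Hom}_{\Lambda}(P^{\bullet},\Lambda)$ is a complex of projective right $\Lambda$-modules whose cohomology is $\operatorname{Ext}^{j}_{\Lambda}(S_{i},\Lambda)$, whereas $\operatorname{Ext}^{j}_{\Gamma}(T_{i},\Gamma)$ is computed from a minimal projective resolution of the simple module $T_{i}$ over the Yoneda algebra; relating the two requires tracking the full bigraded bimodule structure of $\operatorname{Ext}_{\Lambda}(\Lambda_{0},\Lambda_{0})$ and of $\operatorname{Ext}_{\Lambda}(\Lambda_{0},\Lambda)$ as a module over it, which is exactly where the cited proofs (Mart\'{\i}nez-Villa [13], Mori [22]) do their real work. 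As written, your clause-by-clause translation asserts the conclusion rather than deriving it, and clause (iii) cannot simply be ``obtained by running the same argument with the roles reversed'' until clauses (i) and (ii) for $\Gamma$ have actually been established.

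A second, smaller problem: in your final paragraph you propose to control the shifts $n_{i}$ and the Nakayama-type permutation by invoking the local duality theorem of Section 2 and the identity $\Gamma^{k}_{\mathfrak{m}}(\Lambda)=\Gamma^{k}_{\mathfrak{m}^{op}}(\Lambda)$. Those results are established in the paper only under additional hypotheses --- $\Lambda$ noetherian, of finite local cohomology dimension, with graded simples admitting resolutions by finitely generated projectives --- none of which appear in the statement you are proving. Importing that machinery either silently restricts the theorem or obliges you to verify those hypotheses, and your outline does neither. The degree and permutation bookkeeping should instead come out of the same bigraded $\operatorname{Ext}$ analysis that is needed for the main step.
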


\begin{remark}
Observe the following:

i) The algebra $\Lambda $ can be noetherian with non noetherian Yoneda
algebra .

ii) The algebra $\Lambda $ could be Gorenstein and $\Gamma $ only weakly
Gorentein this is: there exists an integer n such that for all $\Gamma $%
-modules left (right) of finite length \linebreak $Ext_{\Gamma }^{j}(M$ ,$%
\Gamma )=0$ for all $j>n$.

iii) The algebra $\Lambda $ could be of finite local cohomology dimension
and $\Gamma $ of infinite local cohomology dimension.
\end{remark}

However, there are Koszul algebras $\Lambda $ with Yoneda algebra $\Gamma $
such that both $\Lambda $ and $\Gamma $ are graded AS\ Gorenstein,
noetherian (in both sides) and of finite cohomological dimension, for
example if $\Lambda $ is selfinjective with noetherian Yoneda algebra $%
\Gamma $ then $\Lambda \otimes \Gamma $ is AS Gorenstein Koszul noetherian
of finite local cohomology dimension on both sides with Yoneda algebra the
skew tensor product (in the sense of [5] or [18]) $\Lambda \boxtimes \Gamma $%
which is also AS Gorenstein noetherian and of finite local cohomology
dimension on both sides.

A concrete example of such algebras is $\Lambda $ the exterior algebra in $n$
variables and $\Gamma $ the polynomial algebra in $n$ variables, this
example appears as the cohomology ring of an elementary abelian $p$-group
over a field of positive characteristic $p\neq 2$. [4]

Another example is the trivial extension $\Lambda =\Bbbk Q\rhd D(\Bbbk Q)$
with $Q$ an Euclidean diagram and $\Gamma $ the preprojective algebra
corresponding to $Q$ [11].

\bigskip We need the following definitions and results from [17]:

\begin{definition}
Let $\Lambda $ be a Koszul algebra with graded Jacobson radical $\mathfrak{m}
$. A finitely generated graded $\Lambda $-module $M$ is weakly Koszul if it
has a minimal projective resolution:

$\rightarrow P_{n}\overset{d_{n}}{\rightarrow }P_{n-1}\rightarrow
...P_{1}\rightarrow P_{0}\overset{d_{0}}{\rightarrow }M\rightarrow 0$ such
that $\mathfrak{m}^{k+1}P_{i}\cap \ker d_{i}=\mathfrak{m}^{k}\ker d_{i}$.
\end{definition}

The next result characterizing weakly Koszul modules was proved in [17].

\begin{theorem}
Let $\Lambda $ be a Koszul algebra with Yoneda algebra and denote by $%
gr_{\Lambda }$, the category of finitely generated graded $\Lambda $%
-modules, $F:gr_{\Lambda }\rightarrow Gr_{\Gamma }$ be the exact functor $%
F(M)=\underset{k\geq 0}{\oplus }Ext_{\Lambda }^{k}(M,\Lambda _{0})$. Then $M$
is weakly Koszul if and only if $F(M)$ is Koszul .
\end{theorem}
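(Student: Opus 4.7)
The plan is to prove the equivalence by translating the filtration-theoretic condition defining weakly Koszul into the linearity of the minimal $\Gamma$-projective resolution of $F(M)$, using Koszul duality as the bridge. First, I would fix a minimal graded projective resolution $\cdots \to P_n \xrightarrow{d_n} P_{n-1} \to \cdots \to P_0 \to M \to 0$ and recall that, because $\Lambda$ is Koszul, the $\Gamma$-module $F(M) = \bigoplus_k \mathrm{Ext}^k_\Lambda(M,\Lambda_0)$ has, as underlying graded vector space, $F(M)_k \cong \mathrm{Hom}_\Lambda(P_k,\Lambda_0)$, with $\Gamma$-action given by the Yoneda product. The starting observation is that $F$ sends a Koszul $\Lambda$-module (one generated in a single degree with linear resolution) to a Koszul $\Gamma$-module, which is the classical Koszul duality correspondence for modules, dual to the module side of the Beilinson--Ginzburg--Soergel picture.

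Second, I would show that $M$ is weakly Koszul if and only if the associated graded module $\mathrm{gr}_{\mathfrak m}(M) = \bigoplus_k \mathfrak m^k M / \mathfrak m^{k+1} M$ decomposes as a direct sum of Koszul $\Lambda$-modules, each generated in a single degree. The filtration identity $\mathfrak m^{k+1}P_i \cap \ker d_i = \mathfrak m^k \ker d_i$ is precisely the condition under which the $\mathfrak m$-adic filtration on $P_\bullet$ restricts compatibly to a filtration on each syzygy $\ker d_i$, so that the associated graded of $P_\bullet$ is a direct sum of linear resolutions of the associated graded pieces of $M$. This is a purely homological reformulation and is where most of the technical bookkeeping sits.

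Third, having established that weakly Koszul is equivalent to "$\mathrm{gr}_{\mathfrak m} M$ is a sum of Koszul modules in the obvious way", I would apply the functor $F$: on each Koszul summand, $F$ produces a shifted Koszul $\Gamma$-module with a linear minimal resolution, and the filtration on $M$ induces a filtration on $F(M)$ whose associated graded is a direct sum of shifted Koszul $\Gamma$-modules. Since a $\Gamma$-module is Koszul exactly when its associated graded with respect to the radical filtration is Koszul, one concludes that $F(M)$ is Koszul. For the converse, assuming $F(M)$ is Koszul, I would invoke the inverse Koszul duality functor $G : Gr_\Gamma \to gr_\Lambda$, $N \mapsto \bigoplus \mathrm{Ext}^k_\Gamma(N, \Gamma_0)$; applied to a Koszul $\Gamma$-module it yields a Koszul $\Lambda$-module, and tracking the pieces of $F(M)$ degree by degree produces a Koszul filtration of $M$, which by the reformulation in the second step means $M$ is weakly Koszul.

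The main obstacle will be the second step: making precise, in terms of the compatibility of the $\mathfrak m$-adic filtration with syzygies, that the associated graded of a weakly Koszul $M$ is an honest direct sum of shifted Koszul modules (rather than merely an extension). This requires exploiting Koszulity of $\Lambda$ to split the syzygies at the associated-graded level, and it is precisely there that the filtration identity $\mathfrak m^{k+1}P_i \cap \ker d_i = \mathfrak m^k \ker d_i$ is used in an essential way. Once this splitting is available, the transfer across the Koszul duality functor $F$ is functorial and the two directions of the equivalence follow symmetrically from the fact that Koszul duality exchanges linear resolutions on one side with pure modules generated in a single degree on the other.
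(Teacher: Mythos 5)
First, a point of comparison: the paper does not actually prove this theorem---it is quoted verbatim from [17] (Mart\'{\i}nez-Villa--Zacharia, \emph{Approximations with modules having linear resolutions})---so there is no internal proof to measure you against; I will compare your proposal with the argument given in [17]. Your global strategy (reduce weak Koszulity to a filtration with shifted Koszul subquotients, then transport that filtration through the exact functor $F$) is in the right spirit and is indeed how the cited proof is organized.

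However, your step 2 contains a genuine error on which the rest of the argument rests. The object $\mathrm{gr}_{\mathfrak m}(M)=\bigoplus_k \mathfrak m^kM/\mathfrak m^{k+1}M$ is the wrong associated graded: each layer $\mathfrak m^kM/\mathfrak m^{k+1}M$ is annihilated by $\mathfrak m$, hence semisimple as a $\Lambda$-module, so the condition ``$\mathrm{gr}_{\mathfrak m}(M)$ is a direct sum of Koszul modules generated in single degrees'' is essentially vacuous and cannot be equivalent to weak Koszulity. The filtration that actually carries the structure is the one by the submodules $U_i$ generated by the homogeneous components of degree at most $d_i$, where $d_1<\dots<d_t$ are the generating degrees of $M$; and even for this filtration the equivalence you assert fails in one direction: having shifted Koszul subquotients is necessary but not sufficient. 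The characterization proved in [17] is that $M$ is weakly Koszul if and only if the submodule $KM$ generated by the lowest-degree component is Koszul up to shift, $M/KM$ is weakly Koszul, \emph{and} $\mathfrak m^kM\cap KM=\mathfrak m^kKM$ for all $k$; this last compatibility condition is precisely what your reformulation drops, and without it the induction does not close. Two further assertions are also unjustified as stated: that a graded $\Gamma$-module is Koszul exactly when its radical-filtration associated graded is Koszul, and that the inverse functor $N\mapsto\bigoplus_k \mathrm{Ext}_\Gamma^k(N,\Gamma_0)$ recovers $M$ from $F(M)$---the latter inversion is only available for (weakly) Koszul input, which is what you are trying to prove, so your converse direction is circular. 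A correct route replaces your step 2 by the $KM$-recursion above, shows that $0\rightarrow KM\rightarrow M\rightarrow M/KM\rightarrow 0$ is sent by $F$ to a short exact sequence $0\rightarrow F(M/KM)\rightarrow F(M)\rightarrow F(KM)\rightarrow 0$ of graded $\Gamma$-modules (this uses the compatibility condition to kill the connecting maps), and then does the degree bookkeeping needed to deduce Koszulity of $F(M)$ from that of the outer terms.
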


As a consequence of this theorem and the results of the last section we have:

\begin{theorem}
Let $\Lambda $ be a Koszul algebra with Yoneda algebra $\Gamma $ such that
both are AS graded Gorenstein noetherian algebras of finite local cohomology
dimension on both sides. Then given a finitely generated left $\Lambda $%
-module $M$ there is a non negative integer $k$ such that $\Omega ^{k}(M)$
is weakly Koszul.
\end{theorem}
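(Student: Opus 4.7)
The plan is to transport the problem to the Yoneda algebra side via the functor $F$ and then apply the main theorem of Section~2 to $\Gamma$. By the preceding characterization theorem, $\Omega^{k}M$ is weakly Koszul if and only if the graded $\Gamma$-module $F(\Omega^{k}M)$ is Koszul, so it suffices to produce some $k$ for which $F(\Omega^{k}M)$ is Koszul.

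The first step is to identify
$$F(\Omega^{k}M)\;\cong\;F(M)_{\geq k}[k]$$
as graded $\Gamma$-modules, where the truncation and the shift refer to the homological grading of $F(M)=\bigoplus_{j\geq 0}\mathrm{Ext}^{j}_{\Lambda}(M,\Lambda_{0})$. For $k=1$ this follows by applying $\mathrm{Hom}_{\Lambda}(-,\Lambda_{0})$ to $0\to\Omega M\to P_{0}\to M\to 0$ (with $P_{0}$ the projective cover of $M$): the vanishing $\mathrm{Ext}^{i\geq 1}_{\Lambda}(P_{0},\Lambda_{0})=0$ and the minimality isomorphism $\mathrm{Hom}_{\Lambda}(P_{0},\Lambda_{0})\xrightarrow{\sim}\mathrm{Hom}_{\Lambda}(M,\Lambda_{0})$ make the long exact sequence of $\mathrm{Ext}$ degenerate into $\mathrm{Ext}^{i}_{\Lambda}(\Omega M,\Lambda_{0})\cong\mathrm{Ext}^{i+1}_{\Lambda}(M,\Lambda_{0})$, compatibly with the Yoneda action of $\Gamma$; iterating gives the general formula.

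Since the Yoneda algebra of a Koszul algebra is itself Koszul, $\Gamma$ is a noetherian Koszul AS Gorenstein algebra of finite local cohomology dimension on both sides, so the main theorem of Section~2 is available for finitely generated graded $\Gamma$-modules. Applied to $F(M)$ it yields an integer $k\geq\mathrm{CMreg}_{\Gamma}(F(M))$ such that $F(M)_{\geq k}[k]$ has a linear projective resolution over $\Gamma$, i.e.\ is a Koszul $\Gamma$-module. Combined with the identification above, $F(\Omega^{k}M)$ is Koszul, and the preceding theorem then gives that $\Omega^{k}M$ is weakly Koszul.

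The main obstacle is verifying that $F(M)$ is indeed a finitely generated $\Gamma$-module, so that the main theorem of Section~2 applies. Because $\Lambda$ is noetherian and $M$ is finitely generated, each $P_{j}$ in the minimal resolution is finitely generated and each $F(M)_{j}=\mathrm{Ext}^{j}_{\Lambda}(M,\Lambda_{0})$ is finite-dimensional over $\Bbbk$; upgrading this to finite generation of $F(M)$ over the noetherian algebra $\Gamma$ is the one real technical input in the argument, and all remaining steps are direct applications of the $F$-functor characterization and of the main structural theorem of Section~2.
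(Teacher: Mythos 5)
Your overall route is the same as the paper's: transport the problem through $F$, use the characterization ``$N$ weakly Koszul iff $F(N)$ Koszul,'' truncate $F(M)$ on the $\Gamma$-side using the main theorem of Section 2, and identify $F(M)_{\geq t}$ with $F(\Omega^{t}M)$ up to shift via dimension shifting along the minimal resolution. All of that matches the paper and is correct as far as it goes. But you have left a genuine gap at exactly the point you flag: the finite generation of $F(M)$ as a graded $\Gamma$-module. You observe that each graded piece $\mathrm{Ext}^{j}_{\Lambda}(M,\Lambda_{0})$ is finite-dimensional over $\Bbbk$, but that is true of any locally finite graded module and says nothing about finite generation over $\Gamma$; without finite generation the truncation theorem of Section 2 (which is stated for finitely generated modules, via $CMreg$ and the regularity inequalities) simply does not apply to $F(M)$, and the whole argument stalls. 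Declaring this ``the one real technical input'' does not discharge it.

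The missing idea is to apply the truncation theorem to $\Lambda$ itself \emph{first}, before passing to $\Gamma$. The paper takes $s\geq CMreg(M)$ so that $M_{\geq s}[s]$ is Koszul, and uses the short exact sequence $0\rightarrow M_{\geq s}\rightarrow M\rightarrow M/M_{\geq s}\rightarrow 0$ with $M/M_{\geq s}$ of finite length. Then $F(M_{\geq s})$ is Koszul up to shift, hence generated in a single degree and finitely generated; $F(M/M_{\geq s})$ is finitely generated because $F$ sends graded simples to indecomposable projectives and therefore finite-length modules to finitely generated modules; and the induced sequence $F(M/M_{\geq s})\rightarrow F(M)\rightarrow F(M_{\geq s})$ together with the noetherianness of $\Gamma$ forces $F(M)$ to be finitely generated. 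Once you insert this step, the rest of your argument (truncating $F(M)$ over $\Gamma$ and identifying the truncation with $F(\Omega^{t}M)$) goes through exactly as in the paper.
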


\begin{proof}
Since $\Lambda $ is Koszul AS graded Gorenstein noetherian algebras of
finite local cohomology dimension on both sides, for any finitely generated
graded $\Lambda $-module $M$ there is a truncation $M_{\geq s}$ such that $%
M_{\geq s}[s]$ is Koszul and there is an exact sequence: $0\rightarrow
M_{\geq s}\rightarrow M\rightarrow M/M_{\geq s}\rightarrow 0$ with $%
M/M_{\geq s}$ of finite length. Then we have an exact sequence: $F(M/M_{\geq
s})\rightarrow F(M)\rightarrow F(M_{\geq s})$. Since $F$ sends simple
modules to indecomposable projective, it sends modules of finite length to
finitely generated modules and $M_{\geq s}$ Koszul up to shift implies $%
F(M_{\geq s})$ Koszul up to shift, hence finitely generated. Since we are
assuming $\Gamma $ noetherian, it follows $F(M)$ is finitely generated. By
Theorem 6, $F(M)$ has a truncation $F(M)_{\geq t}$ Koszul up to shift and $%
F(M)_{\geq t}=\underset{k\geq t}{\oplus }Ext_{\Lambda }^{k}(M,\Lambda
_{0})[-t]\cong \underset{k\geq 0}{\oplus }Ext_{\Lambda }^{k}(\Omega
^{t}(M),\Lambda _{0})[-t]=F(\Omega ^{t}(M)).$

By Theorem 8, $\Omega ^{t}(M)$ is weakly Koszul.
\end{proof}

\begin{definition}
A complex of graded $\Lambda $-modules is linear if for each i, the ith
module is generated in degree i , provided is not zero.
\end{definition}

Let $Q$ be a finite quiver, $\Bbbk Q$ the path algebra graded by path length
and $\Lambda =\Bbbk Q/I$ be a quotient with $I$ a homogeneous ideal
contained in $\Bbbk Q_{\geq 2}$ and $\Gamma $ the Yoneda algebra of $\Lambda 
$, it was shown in [16] that there is a functor

\begin{center}
$%
\begin{array}{c}
\Phi :\ell .f.gr_{\Lambda }\rightarrow \mathfrak{lcp}_{\Gamma }^{-}%
\end{array}%
$
\end{center}

between the category of locally finite graded $\Lambda $-modules, $\ell
.f.gr_{\Lambda }$, and the category of right bounded linear complexes of
finitely generated graded projective $\Gamma $- modules $\mathfrak{lcp}%
_{\Gamma }^{-}$. We recall the construction of $\Phi .$

Let $M=\{M_{i}\}_{i\geq n_{0}}$ be a finitely generated graded $\Lambda $%
-module and $\mu :\Lambda _{1}\otimes _{\Lambda _{0}}M_{k}\rightarrow
M_{k+1} $ the map of $\Lambda _{0}$-modules given by multiplication.

Since $M_{k}$ is a finitely generated $\Lambda _{0}$-module, we have a
homomorphism of $\Lambda _{0}$-modules

\begin{center}
$%
\begin{array}{c}
D(\mu ):D(M_{k+1})\rightarrow D(M_{k})\otimes _{\Lambda _{0}}D(\Lambda _{1})%
\end{array}%
$,
\end{center}

where $D(-)=Hom_{\Lambda _{0}}(-,\Lambda _{0})$. Applying $Hom_{\Lambda
}(-,\Lambda _{0})$ to the exact sequence

\begin{center}
$%
\begin{array}{c}
0\rightarrow \mathfrak{m}\rightarrow \Lambda \rightarrow \Lambda
_{0}\rightarrow 0%
\end{array}%
$
\end{center}

induces an exact sequence

\begin{center}
$%
\begin{array}{c}
\text{0}\rightarrow \text{Hom}_{\Lambda }\text{(}\Lambda _{0}\text{,}\Lambda
_{0}\text{)}\rightarrow \text{Hom}_{\Lambda }\text{(}\Lambda \text{,}\Lambda
_{0}\text{)}\rightarrow \text{Hom}_{\Lambda }\text{(}\mathfrak{m}\text{,}%
\Lambda _{0}\text{)}\rightarrow \text{Ext}_{\Lambda }^{1}\text{(}\Lambda _{0}%
\text{,}\Lambda _{0}\text{)}\rightarrow \text{0}%
\end{array}%
$
\end{center}

the second map is an isomorphism, which implies Hom$_{\Lambda }$($\mathfrak{m%
}$,$\Lambda _{0}$)$\rightarrow $Ext$_{\Lambda }^{1}$($\Lambda _{0}$,$\Lambda
_{0}$) is an isomorphism. Since $\Lambda _{0}$ is semisimple, there is an
isomorphism

\begin{center}
$%
\begin{array}{c}
Hom_{\Lambda }(\mathfrak{m},\Lambda _{0})\cong Hom_{\Lambda }(\mathfrak{m/m}%
^{2},\Lambda _{0})%
\end{array}%
$
\end{center}

As a result there is an isomorphism $D(\Lambda _{1})=$Hom$_{\Lambda
_{0}}(\Lambda _{1},\Lambda _{0})\cong \Gamma _{1}$ and we have a $\Lambda
_{0}$-linear map $d_{k_{0}}:D(M_{k+1})\rightarrow D(M_{k})\otimes _{\Lambda
_{0}}\Gamma _{1}.$

For any $\ell \geq 0$, using the fact $\Lambda _{0}\cong \Gamma _{0}$ the
multiplication map $\upsilon :\Gamma _{1}\otimes _{\Gamma _{0}}\Gamma _{\ell
}\rightarrow \Gamma _{\ell +1}$ induces a new map $d_{k_{\ell }}$, as shown
in the diagram:

\begin{center}
$%
\begin{array}{ccc}
D(M_{k+1})\otimes _{\Gamma _{0}}\Gamma _{\ell } & \rightarrow & 
D(M_{k})\otimes _{\Gamma _{0}}\Gamma _{1}\otimes _{\Gamma _{0}}\Gamma _{\ell
} \\ 
& \searrow & \downarrow 1\otimes \upsilon \\ 
d_{k_{\ell }} &  & D(M_{k})\otimes _{\Gamma _{0}}\Gamma _{\ell +1}%
\end{array}%
$
\end{center}

Hence there is a map in degree zero

\begin{center}
$%
\begin{array}{c}
d_{k}:D(M_{k+1})\otimes _{\Gamma _{0}}\Gamma \text{[-k-1]}\rightarrow
D(M_{k})\otimes _{\Gamma _{0}}\Gamma \text{[-k]}%
\end{array}%
$
\end{center}

\begin{definition}
We call $\Phi $ the linearization functor.
\end{definition}

\begin{proposition}
The sequence $\Phi (M)=\{D(M_{k+1})\otimes _{\Gamma _{0}}\Gamma $[-k-1], $%
d_{k}\}$ is a right bounded linear complex of finitely generated graded
projective $\Gamma $-modules.
\end{proposition}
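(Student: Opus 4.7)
The plan is to verify in turn the four parts of the statement: each term is a finitely generated graded projective $\Gamma$-module, the complex is linear, it is right bounded, and the maps $d_k$ satisfy $d_{k-1}\circ d_k=0$. The first three are essentially formal consequences of the construction; the composition law is the only step where the Koszul hypothesis enters in an essential way, and I would save it for last.

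First, since $\Lambda_0\cong\Gamma_0$ is semisimple and $M_{k+1}$ is a finitely generated $\Lambda_0$-module, its dual $D(M_{k+1})$ is again a finitely generated $\Gamma_0$-module. The induction functor $-\otimes_{\Gamma_0}\Gamma$ therefore sends $D(M_{k+1})$ to a finitely generated graded projective $\Gamma$-module, and the shift $[-k-1]$ preserves both properties. Since $D(M_{k+1})$ sits in degree zero and $\Gamma$ is positively graded, $D(M_{k+1})\otimes_{\Gamma_0}\Gamma[-k-1]$ is generated in degree $k+1$, which is exactly the linearity requirement once that term is placed in position $k+1$ of the complex.

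Right boundedness is immediate from the hypothesis $M=\{M_i\}_{i\geq n_0}$: one has $D(M_{k+1})=0$ whenever $k+1<n_0$, so the complex terminates on the right at $k=n_0-1$ and extends only to the left.

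The main obstacle is checking that $d_{k-1}\circ d_k=0$. Unwinding the construction, the composition is built by iterating the dualized multiplication $D(\mu)$ to obtain a map $D(M_{k+1})\to D(M_{k-1})\otimes D(\Lambda_1)\otimes D(\Lambda_1)$, and then applying $\upsilon:\Gamma_1\otimes\Gamma_1\to\Gamma_2$ on the rightmost two factors under the identification $D(\Lambda_1)\cong\Gamma_1$. Associativity of the $\Lambda$-action on $M$ says that the iterated multiplication $\Lambda_1\otimes\Lambda_1\otimes M_{k-1}\to M_{k+1}$ factors through $\Lambda_2\otimes M_{k-1}$ via the multiplication $\Lambda_1\otimes\Lambda_1\to\Lambda_2$, so after dualizing the iterated map factors through $D(M_{k-1})\otimes D(\Lambda_2)$. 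Because $\Gamma$ is the Koszul (quadratic) dual of $\Lambda$, the image of $D(\Lambda_2)\hookrightarrow D(\Lambda_1)^{\otimes 2}\cong\Gamma_1\otimes\Gamma_1$ is precisely the space of quadratic relations of $\Gamma$, i.e.\ the kernel of $\upsilon$. Applying $1\otimes\upsilon$ therefore annihilates the factored composite, giving $d_{k-1}\circ d_k=0$.
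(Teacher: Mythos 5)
The paper states this proposition without proof: both the construction of $\Phi$ and the statement itself are imported from [16], so there is no argument in the text to compare yours against. Your verification is correct and supplies exactly what the paper leaves implicit. The first three points are indeed formal: since $\Lambda_0\cong\Gamma_0$ is semisimple and $M$ is locally finite, each $D(M_{k+1})$ is a finitely generated $\Gamma_0$-module, so $D(M_{k+1})\otimes_{\Gamma_0}\Gamma[-k-1]$ is finitely generated graded projective and generated in degree $k+1$, and right boundedness follows from $M_i=0$ for $i<n_0$. The essential point, $d_{k-1}\circ d_k=0$, is also handled correctly: the composite of the two dualized multiplications is the dual of the associativity square $\Lambda_1\otimes\Lambda_1\otimes M_{k-1}\to M_{k+1}$, hence factors through $D(M_{k-1})\otimes D(\Lambda_2)$, and under the identification $D(\Lambda_1)\cong\Gamma_1$ the subspace $D(\Lambda_2)=I_2^{\perp}\subseteq\Gamma_1\otimes_{\Gamma_0}\Gamma_1$ is annihilated by the Yoneda product $\upsilon$. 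One small refinement: you do not actually need $\Lambda$ to be Koszul or even quadratic at this stage (those hypotheses enter only in the subsequent propositions, where $\Phi$ is shown to be a duality); for the Yoneda algebra of any $\Bbbk Q/I$ with $I\subseteq\Bbbk Q_{\geq 2}$ the kernel of $\upsilon:\Gamma_1\otimes_{\Gamma_0}\Gamma_1\to\Gamma_2$ already contains $I_2^{\perp}=D(\Lambda_2)$, and this containment is all your argument requires.
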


The following proposition was proved in [16]

\begin{proposition}
The algebra $\Lambda =\Bbbk Q/I$ is quadratic if and only if\linebreak\ $%
\Phi :\ell .f.gr_{\Lambda }\rightarrow \mathfrak{lcp}_{\Gamma }^{-}$ is a
duality.
\end{proposition}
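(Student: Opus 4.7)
The plan is to construct an explicit quasi-inverse $\Psi:\mathfrak{lcp}_{\Gamma}^{-}\to \ell.f.gr_{\Lambda}$ to $\Phi$ and to show that the compatibility $\Psi\Phi\cong\mathrm{id}$, $\Phi\Psi\cong\mathrm{id}$ holds if and only if the defining ideal $I$ of $\Lambda=\Bbbk Q/I$ is generated in degree two. First I would unpack $\Phi$ on morphisms and verify that it is a contravariant additive functor; the key point is that the differential of $\Phi(M)$ is, after identifying $D(\Lambda_{1})\cong \Gamma_{1}$, the dualization of the multiplication $\Lambda_{1}\otimes_{\Lambda_{0}}M_{k}\to M_{k+1}$, and $d^{2}=0$ is a restatement of associativity together with the factorization $\Lambda_{1}\otimes\Lambda_{1}\to \Lambda_{2}$.

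Next I would define $\Psi$ on objects by reading a linear complex $X=\{N_{k}\otimes_{\Gamma_{0}}\Gamma[-k],\,d_{k}\}$ as a bigraded $\Lambda_{0}$-module $\Psi(X)=\bigoplus_{k}D(N_{k})$ and, by dualizing each $d_{k}$, turning its degree-zero component into a $\Lambda_{0}$-bilinear map $\Lambda_{1}\otimes_{\Lambda_{0}}\Psi(X)_{k}\to \Psi(X)_{k+1}$. These maps assemble into a graded action of the tensor algebra $T_{\Lambda_{0}}(\Lambda_{1})$, and the construction is evidently functorial in $X$.

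The crux is to show that this tensor-algebra action descends to $\Lambda$. Write $R\subseteq \Lambda_{1}\otimes_{\Lambda_{0}}\Lambda_{1}$ for the degree-two component of $I$. The Koszul pairing between $\Lambda$ and $\Gamma$ identifies the kernel of $\Gamma_{1}\otimes_{\Gamma_{0}}\Gamma_{1}\to \Gamma_{2}$ with $R^{\perp}$, so the condition $d_{k-1}d_{k}=0$, dualized, is exactly the assertion that the composed map $\Lambda_{1}\otimes_{\Lambda_{0}}\Lambda_{1}\otimes_{\Lambda_{0}}\Psi(X)_{k}\to \Psi(X)_{k+2}$ annihilates $R\otimes_{\Lambda_{0}}\Psi(X)_{k}$. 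Consequently $\Psi(X)$ is automatically a module over the quadratic cover $\Lambda^{(2)}=T_{\Lambda_{0}}(\Lambda_{1})/(R)$, and it is a genuine $\Lambda$-module precisely when $\Lambda=\Lambda^{(2)}$, i.e.\ when $\Lambda$ is quadratic. This gives the ``if'' direction; for the ``only if'' direction, assuming $\Phi$ is a duality the existence of $\Psi$ forces every linear $\Gamma$-complex to come from a $\Lambda$-module, and choosing $X=\Phi(\Lambda^{(2)}/J)$ with $J$ the image of the cubic and higher generators of $I$ shows these higher generators must already be consequences of $R$, so $\Lambda$ is quadratic. The main technical obstacle is the bookkeeping needed to identify $\ker(\Gamma_{1}\otimes_{\Gamma_{0}}\Gamma_{1}\to\Gamma_{2})$ with $R^{\perp}$ under the Koszul pairing and to verify that this orthogonality is precisely what converts $d^{2}=0$ into the vanishing of the $R$-action; once that identification is in place the natural transformations $\Psi\Phi\cong \mathrm{id}$ and $\Phi\Psi\cong \mathrm{id}$ are immediate from the constructions.
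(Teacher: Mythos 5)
The first thing to note is that the paper itself contains no proof of this proposition: it is introduced with ``The following proposition was proved in [16]'' and simply cited, so there is no in-text argument to compare yours against. That said, your strategy is the standard one and, as far as it can be reconstructed, the one used in [16]: build an explicit inverse $\Psi$ by dualizing the degree-one components of the differentials, use the fact that a degree-zero map of graded projectives $N_{k+1}\otimes_{\Gamma_0}\Gamma[-k-1]\to N_{k}\otimes_{\Gamma_0}\Gamma[-k]$ is determined by its component $N_{k+1}\to N_{k}\otimes_{\Gamma_0}\Gamma_{1}$, and translate $d^{2}=0$ through the identification $\ker(\Gamma_{1}\otimes_{\Gamma_0}\Gamma_{1}\to\Gamma_{2})=I_{2}^{\perp}$ into the statement that the quadratic relations annihilate $\bigoplus_{k}D(N_{k})$. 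One point worth making explicit: the construction of $\Phi$ only sees the subalgebra of $\Gamma$ generated by $\Gamma_{1}$, so the relevant target of $\Gamma_{1}\otimes\Gamma_{1}$ is the same whether $\Gamma$ is taken to be the Yoneda algebra or the quadratic dual, and the orthogonality you invoke holds in either reading. With that, the ``if'' direction and the isomorphisms $\Psi\Phi\cong\mathrm{id}$, $\Phi\Psi\cong\mathrm{id}$ (using local finiteness for $D(D(M_{k}))\cong M_{k}$) go through.

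The one step that fails as written is your witness for the ``only if'' direction. You take $J$ to be the ideal of $\Lambda^{(2)}=T_{\Lambda_{0}}(\Lambda_{1})/(I_{2})$ generated by the images of the cubic and higher generators of $I$; but then $\Lambda^{(2)}/J$ is precisely $\Lambda$, so $\Phi(\Lambda^{(2)}/J)=\Phi(\Lambda)$ lies in the image of $\Phi$ and detects nothing. The correct witness is $\Lambda^{(2)}$ itself, or a locally finite truncation $\Lambda^{(2)}/\Lambda^{(2)}_{\geq N}$ with $N$ exceeding the degree of some minimal non-quadratic generator of $I$: your own analysis shows that every object of $\mathfrak{lcp}_{\Gamma}^{-}$ is $\Phi$ of a locally finite graded $\Lambda^{(2)}$-module and that $\Psi\Phi\cong\mathrm{id}$ already holds on $\ell.f.gr_{\Lambda^{(2)}}$; hence if $\Phi$ restricted to $\ell.f.gr_{\Lambda}$ is dense (and full and faithful), every locally finite graded $\Lambda^{(2)}$-module must be annihilated by $I/(I_{2})$, and applying this to the truncations of $\Lambda^{(2)}$ forces $I=(I_{2})$. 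This is a local, fixable slip rather than a wrong approach, but as stated the ``only if'' half proves nothing.
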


We can say more in case $\Lambda =\Bbbk Q/I$ is a Koszul algebra.

\begin{theorem}
Suppose $\Lambda =\Bbbk Q/I$ is a Koszul algebra and $M$ a locally finite
bounded above graded $\Lambda $-module. Then $M$ is Koszul if and only if $%
\Phi (M)$ is exact, except at minimal degree; in that case, $\Phi (M)$ is a
minimal projective resolution of the Koszul module (up to shift) $F(M)=%
\underset{k\geq t}{\oplus }Ext_{\Lambda }^{k}(M,\Lambda _{0})$.
\end{theorem}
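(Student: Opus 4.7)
My strategy is to leverage the duality $\Phi:\ell.f.gr_\Lambda\to\mathfrak{lcp}_\Gamma^-$ from the immediately preceding proposition, reducing the theorem to a comparison of two minimal linear projective resolutions. First I would observe that a right bounded linear complex of finitely generated graded projective $\Gamma$-modules is exact except at its minimal degree if and only if it is a minimal projective resolution of its cokernel, in which case that cokernel is automatically a Koszul $\Gamma$-module. Hence the conclusion can be rephrased as: $M$ is Koszul if and only if $\Phi(M)$ is the minimal linear projective resolution of some Koszul $\Gamma$-module $N$, and part of the task is to identify $N$ with $F(M)$.

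For the forward direction, assume $M$ is Koszul and let $\cdots\to P_1\to P_0\to M\to 0$ be its minimal projective resolution, with $P_k=\Lambda\otimes_{\Lambda_0}V_k$ and $V_k$ concentrated in degree $k$. Applying $Hom_\Lambda(-,\Lambda_0)$ collapses all differentials by minimality, yielding $Ext^k_\Lambda(M,\Lambda_0)=D(V_k)$ concentrated in degree $-k$, so that $F(M)=\bigoplus_k D(V_k)$ is Koszul up to shift. I would then identify $\Phi(M)$ with the minimal projective resolution of $F(M)$ over $\Gamma$ term-by-term: both have $k$-th term a projective obtained by dualizing the appropriate graded component of $M$ via the canonical isomorphism $D(\Lambda_1)\cong\Gamma_1$, and both differentials are induced from the multiplication structure. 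The key fact is that the resulting complex is acyclic away from the boundary; this uses Koszulity of $M$ precisely, in the form that $\ker(\Lambda_1\otimes_{\Lambda_0}M_{k-1}\to M_k)$ is concentrated in degree $k$ and agrees with $V_k$, propagating exactness inductively through the complex.

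For the converse, assume $\Phi(M)$ is exact except at the minimal degree, so $\Phi(M)$ is the minimal linear projective resolution of its cokernel $N$, which is automatically Koszul. Since $\Phi$ is a duality, the module $M$ is recovered uniquely from $\Phi(M)$, and dualizing the exactness of $\Phi(M)$ back to the $\Lambda$-side gives the vanishing of $Ext^k_\Lambda(M,\Lambda_0)$ outside degree $-k$, a standard characterization of the Koszulity of $M$. The identification $F(M)=N$ then follows by comparing the Ext computation on the $\Lambda$-side with the cokernel description on the $\Gamma$-side. The main obstacle is the computational heart of the forward implication: establishing the acyclicity of $\Phi(M)$ is essentially the classical acyclicity of the Koszul complex of a Koszul module, and requires careful bookkeeping of degrees and the identification of the differentials of $\Phi(M)$, induced from $\Lambda_1\otimes M_k\to M_{k+1}$, with those arising in the $\Gamma$-projective resolution of $F(M)$, together with an induction using the fact that the syzygies of a Koszul module are themselves Koszul up to shift.
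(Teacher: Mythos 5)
The paper itself gives no proof of this theorem: it is quoted from [16] (Mart\'{\i}nez Villa--Saor\'{\i}n), so there is no internal argument to compare against and your outline must stand on its own. Its architecture is the standard one and much of it is fine: a right bounded linear complex of finitely generated projectives is automatically minimal (the differentials land in the radical for degree reasons), so exactness away from the minimal degree does make $\Phi(M)$ the minimal linear resolution of its cokernel, which is then Koszul; and the term-by-term match of $\Phi(M)$ with the minimal $\Gamma$-resolution of $F(M)$ via $D(M_k)$ is the right target.

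However, the two load-bearing steps are not yet arguments. In the forward direction, the ``key fact'' you invoke --- that $\ker(\Lambda_1\otimes_{\Lambda_0}M_{k-1}\to M_k)$ is concentrated in degree $k$ --- is vacuous: $\Lambda_1\otimes_{\Lambda_0}M_{k-1}$ lives entirely in degree $k$, so every subspace of it does. The genuine input from Koszulity is that each syzygy $\Omega^k(M)$ is generated in degree $k$, and converting that into acyclicity of $\Phi(M)$ is exactly the computation you defer; the usual route is to apply the exact functor $\Phi$ to $0\rightarrow\Omega M\rightarrow P_0\rightarrow M\rightarrow 0$, note that $\Phi(P_0)$ is the minimal $\Gamma$-resolution of a semisimple module, and induct on syzygies --- equivalently, to prove the bigraded identification of $H^{-k}(\Phi(M))$ with duals of the groups $Ext^{j}_{\Lambda}(M,\Lambda_0)$, which settles both implications simultaneously. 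In the converse, ``since $\Phi$ is a duality, \dots dualizing the exactness of $\Phi(M)$ back to the $\Lambda$-side gives the vanishing of $Ext^k_{\Lambda}(M,\Lambda_0)$ outside degree $-k$'' is not a deduction: a duality of categories recovers $M$ from $\Phi(M)$ as an object, but by itself says nothing about how the homology of the linear complex encodes $Ext_{\Lambda}(M,\Lambda_0)$. You need either the homology identification just described, or to run the forward direction over $\Gamma$ (whose Yoneda algebra is again $\Lambda$) applied to the Koszul cokernel $N$ and then use $E_{\Gamma}(N)\cong M$ to conclude $M$ is Koszul. As written, the proposal is a correct skeleton whose central steps are either missing or misstated.
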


\subsection{Approximations by linear complexes}

In this section we will see that the approximations by linear complexes
given in [15] can be extended to the family of AS Gorenstein Koszul algebras
considered above. Let $\Lambda $ be a possibly infinite dimensional Koszul
algebra with Yoneda algebra $\Gamma $. The category of complexes of finitely
generated graded projective $\Gamma $-modules with bounded homology $%
K^{-b}(grP_{\Gamma }),$ module the homotopy relations, is equivalent to the
derived category of bounded complexes $D_{fg}^{b}(Gr_{\Gamma })$.

We proved in Lemma 4, that any complex $X$ in $D_{fg}^{-}(Gr_{\Gamma })$ has
projective resolution $P\rightarrow X$ with $P$ subdiagonal. Linear
complexes are by definition subdiagonal.

\begin{lemma}
Let $M$ and $N$ be complexes of graded modules over a graded algebra and $%
f:M\rightarrow N$ a null-homotopic chain map. If $M$ is linear and $N$ is
diagonal, then $f=0$.
\end{lemma}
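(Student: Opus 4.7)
The plan is to unpack the definition of null-homotopy and then perform a single internal-degree count. Since $f:M\rightarrow N$ is null-homotopic, there exist graded module homomorphisms of internal degree zero $h_i:M_i\rightarrow N_{i+1}$ such that
\[
f_i \;=\; d^{N}_{i+1}\circ h_i \;+\; h_{i-1}\circ d^{M}_i
\]
for every $i$ (with the convention $\cdots\to P_{i+1}\to P_i\to P_{i-1}\to\cdots$ used throughout the paper, so the homotopy raises the complex index by one). It therefore suffices to prove that each $h_i$ is the zero map.

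First, because $M$ is linear, each $M_i$ is generated as a graded module by its internal-degree-$i$ component $(M_i)_i$, so any graded module homomorphism out of $M_i$ is determined by its restriction to $(M_i)_i$. Second, because $h_i$ has internal degree zero, it carries $(M_i)_i$ into $(N_{i+1})_i$. Third, because $N$ is diagonal (understood in the sense of the subdiagonal condition of Definition~4, so that $N_{i+1}$ is generated in internal degrees at least $i+1$), the module $N_{i+1}$ is concentrated in internal degrees $\ge i+1$; in particular $(N_{i+1})_i=0$. Combining these three observations, $h_i$ sends every generator of $M_i$ to zero, hence $h_i=0$. Since $i$ was arbitrary, $h=0$ and $f=d^{N}h+hd^{M}=0$.

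I do not expect any real obstacle: the argument is essentially a one-line degree count. The only things to keep straight are conventional: that the homotopy $h$ raises the complex index by one so that $h_i$ lands in $N_{i+1}$, and that the word "diagonal" is read so as to force the vanishing $(N_{i+1})_i=0$, which is automatic once $N_{i+1}$ is generated in internal degrees at least $i+1$. No appeal to projectivity, local cohomology, or any of the earlier structural results of the section is needed.
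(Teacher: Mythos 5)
Your argument is correct. Note that the paper itself gives no proof of this lemma --- it is stated without proof, being carried over from reference [15] --- so there is nothing to compare line by line; but your degree count is exactly the standard argument one expects here. The two hinges you identify are the right ones: reading ``diagonal'' as the subdiagonal condition of Definition~5 (not Definition~4, a harmless slip), so that $N_{i+1}$ is generated in internal degrees at least $i+1$; and observing that over a \emph{positively} graded algebra $A=\oplus_{i\geq 0}A_{i}$ (the standing hypothesis of the paper) ``generated in degrees $\geq i+1$'' forces $(N_{i+1})_{j}=0$ for all $j\leq i$, since $A\cdot N_{\geq i+1}\subseteq N_{\geq i+1}$. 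That last point is the only place where something beyond pure bookkeeping is used, and it is worth saying explicitly, as you do. With $(N_{i+1})_{i}=0$ and $M_{i}$ generated in degree $i$, each degree-zero homotopy component $h_{i}$ kills the generators of $M_{i}$ and hence vanishes, so $f=d^{N}h+hd^{M}=0$. No appeal to projectivity or to the earlier structural results is needed, as you say.
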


\begin{corollary}
Any morphism in a derived category of modules whose domain is a bounded on
the right linear complex of projective modules can be represented by a chain
map.
\end{corollary}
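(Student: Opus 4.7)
The plan is to reduce the statement to the classical fact that a right-bounded complex of projectives is K-projective, so that morphisms in $D(A)$ out of such a complex coincide with homotopy classes of chain maps from it. Concretely, a derived-category morphism $f:M\to N$ is presented as an equivalence class of roofs $M\xleftarrow{s}M'\xrightarrow{g}N$ with $s$ a quasi-isomorphism, and I would aim to produce a genuine chain map $h:M\to N$ whose associated roof $(\mathrm{id}_M,h)$ is equivalent to $(s,g)$.

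The key construction is a chain map $t:M\to M'$ with $s\circ t$ chain-homotopic to $\mathrm{id}_M$, obtained by the standard inductive lifting argument. Because $M$ is bounded on the right, the induction has a base case; in each higher degree the existence of $t$ reduces, via the acyclicity of the mapping cone of $s$, to a projective lifting problem that can be solved since each component of $M$ is projective. A parallel induction supplies the homotopy $s\circ t\simeq\mathrm{id}_M$. Once $t$ is in hand, I set $h:=g\circ t$, whereupon the roofs $(s,g)$ and $(\mathrm{id}_M,h)$ are connected by the commutative diagram with top map $t$, so $h$ represents $f$ in $D(A)$.

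The main obstacle is really only the bookkeeping in the inductive lifting step: no new ingredient is required beyond the classical existence of a chain-map lift of $\mathrm{id}_M$ through a quasi-isomorphism into a right-bounded complex of projectives. The linearity of $M$ is not needed for existence of $h$; it enters only through the preceding lemma, which, when $N$ is diagonal, upgrades uniqueness up to homotopy to uniqueness on the nose, and thus will be decisive for the approximation-by-linear-complexes machinery being set up in this subsection rather than for the corollary itself.
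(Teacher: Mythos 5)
Your argument is correct and is precisely the standard K-projectivity argument that the paper leaves implicit (the corollary is stated there without proof): a right-bounded complex of projectives admits a lift $t$ of $\mathrm{id}_M$ through the quasi-isomorphism leg of any roof, so $g\circ t$ is a chain-map representative. Your closing observation is also accurate — linearity of $M$ plays no role in existence, only in the rigidity supplied by the preceding lemma — so the proposal matches the paper's intent.
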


Since our interest is in Koszul algebras we need the following:

\begin{definition}
A complex is said to be totally linear, if it is linear and each of its
terms has a linear projective resolution.

Observe that this notion is a generalization of a linear complex of
projective modules.
\end{definition}

Observe that, though the proposition below has been stated more generally
than in [15], the proof is the same as in [15].

\begin{proposition}
Let $\Gamma $be a noetherian graded ring and $M_{\bullet }=\{M_{i}$, $%
d_{i}\}_{n\geq i\geq 0}$ a bounded totally linear complex of finitely
generated graded $\Gamma $-modules. Then there exists a bounded on the right
linear complex of finitely generated projective graded modules $P_{\bullet }$
and a quasi-isomorphism $\mu :P_{\bullet }\rightarrow M_{\bullet }$ such
that $\mu _{i}:P_{i}\rightarrow M_{i}$ is an epimorphism for each i.
\end{proposition}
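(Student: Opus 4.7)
The plan is to build $P_\bullet$ by induction on the length $n$ of $M_\bullet$, extending the projective resolution one step at a time via a mapping-cone construction. When $n=0$ the complex $M_\bullet$ reduces to the single module $M_0$ sitting in position $0$, and the totally linear hypothesis furnishes a finitely generated linear projective resolution $Q^{(0)}_\bullet\to M_0$, which is the desired $P_\bullet$.

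For the inductive step, let $M_\bullet^{\le m}$ denote the truncation of $M_\bullet$ to positions $0\le i\le m$, and suppose we already have a bounded-on-the-right linear complex $P^{(m-1)}_\bullet$ of finitely generated graded projective $\Gamma$-modules together with a termwise surjective quasi-isomorphism $\mu^{(m-1)}\colon P^{(m-1)}_\bullet\to M_\bullet^{\le m-1}$. The short exact sequence of complexes $0\to M_\bullet^{\le m-1}\to M_\bullet^{\le m}\to C_\bullet\to 0$, where $C_\bullet$ has $M_m$ in position $m$ and $0$ elsewhere with zero differentials, realises $M_\bullet^{\le m}$ as a mapping cone of the connecting map $C_\bullet[-1]\to M_\bullet^{\le m-1}$ (which in position $m-1$ is simply $d_m\colon M_m\to M_{m-1}$). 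The totally linear hypothesis yields a finitely generated linear projective resolution $Q^{(m)}_\bullet\to M_m$; placing $Q^{(m)}_k$ in homological position $m+k$ produces a linear projective resolution $R^{(m)}_\bullet$ of $C_\bullet$. Lift the connecting map to a chain map $R^{(m)}_\bullet[-1]\to P^{(m-1)}_\bullet$ using projectivity, and let $P^{(m)}_\bullet$ be the mapping cone: as graded modules $P^{(m)}_j=P^{(m-1)}_j\oplus Q^{(m)}_{j-m}$, and the functoriality of cones produces a compatible augmentation $\mu^{(m)}\colon P^{(m)}_\bullet\to M_\bullet^{\le m}$. Setting $P_\bullet:=P^{(n)}_\bullet$ finishes the construction.

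The remaining properties are then checked directly. Each summand of $P^{(m)}_j$ is generated in degree $j$: $P^{(m-1)}_j$ by the inductive hypothesis and $Q^{(m)}_{j-m}$ because $M_m$ is generated in degree $m$ and its resolution is linear. Hence $P^{(m)}_\bullet$ is linear, consists of finitely generated graded projectives, and is bounded on the right since both its summands vanish in negative degrees. The map $\mu^{(m)}_j$ agrees with $\mu^{(m-1)}_j$ on $P^{(m-1)}_j$ and restricts to the surjection $Q^{(m)}_0\twoheadrightarrow M_m$ on the new summand when $j=m$, so $\mu^{(m)}$ is termwise surjective; the long exact homology sequence of the mapping cone, together with the quasi-isomorphisms $\mu^{(m-1)}$ and $R^{(m)}_\bullet\to C_\bullet$, forces $\mu^{(m)}$ to be a quasi-isomorphism. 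The main obstacle, and the place where the totally linear hypothesis is indispensable, is the bookkeeping of shifts in the inductive step: one must verify that placing $Q^{(m)}_k$ in position $m+k$ lines up the internal degrees across the two summands of $P^{(m)}_j$ so that the twisted mapping-cone differential preserves linearity, which is exactly what the linearity of $M_\bullet$ together with the linearity of each resolution $Q^{(i)}_\bullet$ ensures.
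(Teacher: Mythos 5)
Your argument is correct, but it is not the route the paper takes. The paper works term by term from the bottom of the complex using projective covers and pullbacks: it replaces $M_0$ by its projective cover $P_0$, corrects $M_1$ by a pullback to a module $W_1$ sitting in an extension $0\rightarrow \Omega (M_0)\rightarrow W_1\rightarrow M_1\rightarrow 0$, observes that $W_1$ is again generated in degree $1$ with a linear resolution (extensions preserve this property), and iterates; only at the last stage does it splice in the full linear resolution of the final module $W_n$. You instead resolve each term $M_m$ separately by its linear projective resolution $Q^{(m)}_\bullet$ and assemble these by iterated mapping cones along the brutal truncation filtration, so that the resulting $P_j=\bigoplus_i Q^{(i)}_{j-i}$ is visibly linear. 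Your version buys a cleaner and more transparent bookkeeping of degrees and avoids the sequence of pullback diagrams, at the price of producing a non-minimal complex (the paper's intermediate terms are projective covers) and of needing one standard but slightly understated fact: the lift of the composite $R^{(m)}_\bullet [-1]\rightarrow C_\bullet [-1]\rightarrow M^{\leq m-1}_\bullet$ to a strict chain map into $P^{(m-1)}_\bullet$ uses not just termwise projectivity and surjectivity but also that $\mu ^{(m-1)}$ is a quasi-isomorphism with acyclic kernel (lift up to homotopy, then absorb the homotopy using termwise surjectivity); "using projectivity" alone does not produce a chain map. With that point made explicit, your induction, the five-lemma argument on cones, and the surjectivity check are all sound, and the total linearity hypothesis enters in exactly the same way as in the paper, namely to guarantee that each $Q^{(m)}_k$ is generated in degree $m+k$.
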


\begin{proof}
The approximation is constructed by induction. We start with the exact
sequence: $0\rightarrow B_{0}\rightarrow M_{0}\rightarrow H_{0}\rightarrow 0$%
, take the projective cover $P_{0}\rightarrow M_{0}\rightarrow 0$ and
complete a commutative exact diagram:

$%
\begin{array}{ccccccc}
& 0 &  & 0 &  &  &  \\ 
& \downarrow &  & \downarrow &  &  &  \\ 
0\rightarrow & \Omega (M_{0}) & \rightarrow & \Omega (M_{0}) & \rightarrow & 
0 &  \\ 
& \downarrow &  & \downarrow &  &  &  \\ 
0\rightarrow & \Omega (H_{0}) & \rightarrow & P_{0} & \rightarrow & H_{0} & 
\rightarrow 0 \\ 
& \downarrow &  & \downarrow &  & \downarrow 1 &  \\ 
0\rightarrow & B_{0} & \rightarrow & M_{0} & \rightarrow & H_{0} & 
\rightarrow 0 \\ 
& \downarrow &  & \downarrow &  &  &  \\ 
& 0 &  & 0 &  &  & 
\end{array}%
$

Taking the pull back we obtain a commutative exact diagram:

$%
\begin{array}{ccccccc}
&  &  & 0 &  & 0 &  \\ 
&  &  & \downarrow &  &  &  \\ 
& 0 & \rightarrow & \Omega (M_{0}) & \rightarrow & \Omega (M_{0}) & 
\rightarrow 0 \\ 
& \downarrow &  & \downarrow &  & \downarrow &  \\ 
0\rightarrow & Z_{1} & \rightarrow & W_{1} & \rightarrow & \Omega (H_{0}) & 
\rightarrow 0 \\ 
& \downarrow &  & \downarrow &  & \downarrow &  \\ 
0\rightarrow & Z_{1} & \rightarrow & M_{1} & \rightarrow & B_{0} & 
\rightarrow 0 \\ 
& \downarrow &  & \downarrow &  & \downarrow &  \\ 
& 0 &  & 0 &  & 0 & 
\end{array}%
$

Since $M_{1}$ and $\Omega (M_{0})$ are both generated in degree one and have
linear resolutions, the same is true for $W_{1}.$

It is clear that the complex $0\rightarrow M_{n}\rightarrow ...\rightarrow
M_{2}\rightarrow W_{1}\rightarrow P_{0}\rightarrow 0$ is totally linear and
quasi-isomorphic to $M_{\bullet }$and the quasi-isomorphism is an
epimorphism in each degree.

Assume by induction we have constructed the totally linear complex: $%
0\rightarrow M_{n}\rightarrow ...\rightarrow M_{j+1}\rightarrow
W_{j}\rightarrow P_{j-1}\rightarrow ...\rightarrow P_{0}\rightarrow 0$

together with a quasi-isomorphism $\mu $ to the complex $M_{\bullet }$ which
is an epimorphism in each degrees $k$ with $0\leq k\leq j$ and the identity
in degrees $k$ for $j+1\leq k\leq n$.

We have a commutative exact diagram:

$%
\begin{array}{ccccccc}
& 0 &  & 0 &  &  &  \\ 
& \downarrow &  & \downarrow &  &  &  \\ 
0\rightarrow & \Omega (W_{j}) & \rightarrow & \Omega (W_{j}) & \rightarrow & 
0 &  \\ 
& \downarrow &  & \downarrow &  & \downarrow &  \\ 
0\rightarrow & K & \rightarrow & P_{j} & \rightarrow & W_{j}/B_{j} & 
\rightarrow 0 \\ 
& \downarrow &  & \downarrow &  & \downarrow 1 &  \\ 
0\rightarrow & B_{j} & \rightarrow & W_{j} & \rightarrow & W_{j}/B_{j} & 
\rightarrow 0 \\ 
& \downarrow &  & \downarrow &  & \downarrow &  \\ 
& 0 &  & 0 &  & 0 & 
\end{array}%
$

which induces by pullback the commutative exact diagram:

$%
\begin{array}{ccccccc}
&  &  & 0 &  & 0 &  \\ 
&  &  & \downarrow &  & \downarrow &  \\ 
& 0 & \rightarrow & \Omega (W_{j}) & \rightarrow & \Omega (W_{j}) & 
\rightarrow 0 \\ 
& \downarrow &  & \downarrow &  & \downarrow &  \\ 
0\rightarrow & Z_{j+1} & \rightarrow & W_{j+1} & \rightarrow & K & 
\rightarrow 0 \\ 
& \downarrow &  & \downarrow &  & \downarrow &  \\ 
0\rightarrow & Z_{j+1} & \rightarrow & M_{j+1} & \rightarrow & B_{j} & 
\rightarrow 0 \\ 
& \downarrow &  & \downarrow &  & \downarrow &  \\ 
& 0 &  & 0 &  & 0 & 
\end{array}%
$

By Verdier's lemma we have a complex: $P_{\bullet }^{(j)}$ : $0\rightarrow
M_{n}\rightarrow ...\rightarrow M_{j+2}\rightarrow W_{j+1}\rightarrow
P_{j}\rightarrow ...\rightarrow P_{0}\rightarrow 0$ and a quasi isomorphism $%
\dot{\mu}:P_{\bullet }^{(j)}\rightarrow M_{\bullet }$ which is the identity
in degrees $k$ such that $j+2\leq k\leq n$ and an epimorphism in the
remaining degrees.

We get by induction a totally linear complex: $P_{\bullet }^{(n-1)}$ : $%
0\rightarrow W_{n}\rightarrow P_{n-1}\rightarrow P_{n-2}\rightarrow
...\rightarrow P_{0}\rightarrow 0$ with $P_{j}$ for $0\leq j\leq n-1$
finitely generated graded projective modules generated in degree $j$. There
is a quasi-isomorphism $\mu :P_{\bullet }^{(n-1)}\rightarrow M_{\bullet }$
such that in each degree the maps are epimorphisms.

As above, we obtain the commutative exact diagram:

$%
\begin{array}{ccccccc}
& 0 &  & 0 &  &  &  \\ 
& \downarrow &  & \downarrow &  &  &  \\ 
0\rightarrow & \Omega (W_{n}) & \rightarrow & \Omega (W_{n}) & \rightarrow & 
0 &  \\ 
& \downarrow &  & \downarrow &  & \downarrow &  \\ 
0\rightarrow & Z_{n}^{\prime } & \rightarrow & P_{n} & \rightarrow & B_{n-1}
& \rightarrow 0 \\ 
& \downarrow &  & \downarrow &  & \downarrow &  \\ 
0\rightarrow & Z_{n} & \rightarrow & W_{n} & \rightarrow & B_{n-1} & 
\rightarrow 0 \\ 
& \downarrow &  & \downarrow &  & \downarrow &  \\ 
& 0 &  & 0 &  & 0 & 
\end{array}%
$

Since $W_{n}$ has a linear resolution $\Omega (W_{n})$ has a linear
resolution $P_{\bullet }^{(n+1)}\rightarrow \Omega (W_{n}).$

It follows $P_{\bullet }^{(n+1)}\rightarrow P_{n}\rightarrow
P_{n-1}\rightarrow P_{n-2}\rightarrow ...\rightarrow P_{0}\rightarrow 0$ is
a linear complex of finitely generated graded projective modules which is
quasi-isomorphic to $M_{\bullet }$ and all the maps in the quasi-isomorphism
are epimorphisms.
\end{proof}

We see next that for noetherian AS Gorenstein algebras of finite local
cohomology any bounded complex can be approximated by a totally linear
complex.

\begin{proposition}
Let $\Gamma $ be a Koszul algebra AS graded Gorenstein noetherian algebras
of finite local cohomology dimension on both sides. Then given a bounded
complex $M_{\bullet }$ of finitely generated graded $\Gamma $-modules, there
exists a totally linear subcomplex $L_{\bullet }$ such that $M_{\bullet }/$ $%
L_{\bullet }$ is a complex of modules of finite length.
\end{proposition}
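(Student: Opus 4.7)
The plan is to construct $L_{\bullet }$ by truncating each term $M_{i}$ of the complex at a homological-index-dependent degree, so that the truncation is Koszul up to shift while the resulting family remains a subcomplex of $M_{\bullet }$. Since $M_{\bullet }$ is bounded, only finitely many $M_{i}$ are non-zero, and each is a finitely generated graded module over the Koszul AS Gorenstein noetherian algebra $\Gamma $ of finite local cohomology dimension on both sides, so Theorem 1 (and the last theorem of Section 2) applies and produces for each $i$ a finite $CMreg(M_{i})$ such that $(M_{i})_{\geq k}[k]$ is Koszul, with a linear projective resolution, whenever $k\geq CMreg(M_{i})$.

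First I would choose an integer $s$ so large that $s+i\geq CMreg(M_{i})$ for every $i$ in the (finite) support of $M_{\bullet }$, and set $L_{i}:=(M_{i})_{\geq s+i}$. Because the differentials $d_{i}\colon M_{i}\rightarrow M_{i-1}$ are of internal degree zero, one has $d_{i}(L_{i})\subseteq (M_{i-1})_{\geq s+i}\subseteq (M_{i-1})_{\geq s+(i-1)}=L_{i-1}$, so $L_{\bullet }$ is a subcomplex of $M_{\bullet }$. The staircase pattern $t_{i}=s+i$ (one step per homological index) is precisely what the definition of a linear complex requires.

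Next I would verify that $L_{\bullet }$ is totally linear. By the choice of $s$, each $(M_{i})_{\geq s+i}[s+i]$ is Koszul, so $L_{i}$ is generated in the single degree $s+i$ and admits a linear projective resolution (Koszul modules have linear resolutions by definition, which is exactly what the last theorem of Section 2 gives for the truncations). Combined with the offset $+1$ per homological step, this delivers total linearity of $L_{\bullet }$ (after a harmless global shift $[s]$ if one insists on the strict convention ``$i$th module generated in degree $i$''). For the quotient, $M_{i}/(M_{i})_{\geq s+i}$ is concentrated in a finite range of degrees below $s+i$, with each graded piece finite dimensional by local finiteness of $\Gamma $, so it is of finite length; hence $M_{\bullet }/L_{\bullet }$ is a complex of modules of finite length.

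The main obstacle, and really the only non-obvious point, is identifying the correct truncation pattern $L_{i}=(M_{i})_{\geq s+i}$: a uniform truncation $(M_{i})_{\geq s}$ cannot simultaneously be a subcomplex and be linear, whereas the staircase version achieves both because the degree-zero differentials keep image degrees bounded below in a way compatible with the $+1$ shift required by linearity. Once this pattern is in place, Theorem 1 supplies the Koszul property of each term, and local finiteness handles the quotient; the verification of the subcomplex, linear, and finite-length properties is then routine.
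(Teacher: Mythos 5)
Your proposal is correct and follows essentially the same route as the paper: apply the truncation theorem (Theorem 1, the main theorem of Section 2) to each of the finitely many terms, pick a single integer $s$ dominating all the regularities, and define the staircase subcomplex $L_{i}=(M_{i})_{\geq s+i}$, which is totally linear up to a global shift and has finite-length quotient by local finiteness. (Only a cosmetic quibble: a uniform truncation $(M_{i})_{\geq s}$ \emph{is} still a subcomplex, since the differentials have internal degree zero; it merely fails to be linear, which is why the staircase is needed.)
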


\begin{proof}
Let $M_{\bullet }$ be the complex $M_{\bullet }=\{M_{j}\mid 0\leq j\leq n\}.$
By Theorem 6, for each $j$ there is a truncation ($M_{j})_{\geq n_{j}}$ such
that ($M_{j})_{\geq n_{j}}[n_{j}]$ is Koszul. Taking $n=\{\max $ $n_{j}\}$
each ($M_{j})_{\geq n}[n]$ is Koszul. Define $L_{\bullet }=\{L_{j}\mid
L_{j}= $($M_{j})_{\geq n+j}\}.$ Then $L_{\bullet }$ is totally linear with $%
M_{\bullet }/L_{\bullet }$ a is a complex of modules of finite length.
\end{proof}

We have now the following:

\begin{lemma}
Let $\Lambda $ be a Koszul algebra AS graded Gorenstein noetherian algebras
of finite local cohomology dimension on both sides with Yoneda algebra $%
\Gamma $and $\Phi :gr_{\Lambda }\rightarrow \mathfrak{lcp}_{\Gamma }^{-}$
the linearization functor. Then for any finitely generated module $M$ the
complex $\Phi (M)$ is contained in $\mathfrak{lcp}_{\Gamma }^{-,b}$, this is
the homology $H^{i}(\Phi (M))=0$ for almost all i.
\end{lemma}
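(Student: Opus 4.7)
The plan is to decompose $M$ via the truncation given by Theorem 1. Choose $s\geq \mathrm{CMreg}(M)$ so that $M_{\geq s}[s]$ is Koszul as a $\Lambda$-module, and consider the short exact sequence
\[
0\to M_{\geq s}\to M\to M/M_{\geq s}\to 0.
\]
Since $\Lambda_{0}=\Gamma_{0}$ is semisimple, both operations used to build $\Phi$ (the $\Lambda_{0}$-linear dual $D$ and the tensor $-\otimes_{\Gamma_{0}}\Gamma$) are exact on $\Lambda_{0}$-modules, and the differentials $d_{k}$ are natural in the input module. The contravariant functor $\Phi$ therefore sends the sequence above to a short exact sequence of complexes
\[
0\to \Phi(M/M_{\geq s})\to \Phi(M)\to \Phi(M_{\geq s})\to 0,
\]
whose long exact homology sequence reduces the lemma to showing that each of the two outer complexes has bounded homology.

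The quotient $M/M_{\geq s}$ is supported only in the finitely many degrees $0,1,\dots,s-1$; since $\Lambda$ is noetherian and locally finite and $M$ is finitely generated, each of those components is a finite dimensional $\Bbbk$-vector space, so $M/M_{\geq s}$ has finite length. Consequently $\Phi(M/M_{\geq s})$ has only finitely many nonzero terms, and its homology is trivially bounded. For the other factor, $M_{\geq s}[s]$ is Koszul, so by the theorem characterizing Koszul modules through $\Phi$ (the last theorem stated before this lemma), $\Phi(M_{\geq s}[s])$ is a minimal projective resolution of the single Koszul module $F(M_{\geq s}[s])$ up to shift; in particular its homology is concentrated in one degree. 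A direct inspection of the formula $\Phi(N)^{-k-1}=D(N_{k+1})\otimes_{\Gamma_{0}}\Gamma[-k-1]$ shows that $\Phi(N[s])$ agrees with $\Phi(N)$ up to an overall translation of the complex together with an internal grading shift, so $\Phi(M_{\geq s})$ likewise has homology concentrated in a single degree.

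Combining these two observations with the long exact homology sequence of the previous triangle yields $H^{i}(\Phi(M))=0$ for all but finitely many $i$, as required. There is no real obstacle: the substantive ingredients (the Koszul truncation theorem from Section 2 and the characterization of Koszul modules via $\Phi$ recalled above) have already been established, and the argument is essentially assembly. The only points requiring independent verification are the exactness of $\Phi$ on short exact sequences and its compatibility with the internal shift of $M$; both are immediate from the component-wise construction of $\Phi$ over the semisimple base $\Lambda_{0}=\Gamma_{0}$.
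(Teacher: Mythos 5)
Your proposal is correct and follows essentially the same route as the paper: truncate $M$ using the Koszul truncation theorem, apply the exact functor $\Phi$ to the sequence $0\to M_{\geq s}\to M\to M/M_{\geq s}\to 0$, observe that $\Phi(M/M_{\geq s})$ is a finite complex and that $\Phi(M_{\geq s})$ is exact except at minimal degree, and conclude via the long exact homology sequence. The extra care you take over the exactness of $\Phi$ and its compatibility with the internal shift is a reasonable elaboration of points the paper leaves implicit.
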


\begin{proof}
According to Theorem 6, there is a truncation $M_{\geq s}$ which is Koszul
up to shift, and the exact sequence $0\rightarrow M_{\geq s}\rightarrow
M\rightarrow M/M_{\geq s}\rightarrow 0$, which induces an exact sequence of
complexes $0\rightarrow \Phi (M/M_{\geq s})\rightarrow \Phi (M)\rightarrow
\Phi (M_{\geq s})\rightarrow 0$ where $\Phi (M/M_{\geq s})$ is a finite
complex and $\Phi (M_{\geq s})$ is exact, except at minimal degree, it
follows by the long homology sequence that $H^{i}(\Phi (M)=0$ for almost all 
$i$.
\end{proof}

We remarked above that the categories $D^{b}(gr_{\Gamma })$ and $%
K^{-,b}(grP_{\Gamma })$ are equivalent as triangulated categories, we have
proved that the image of $\Phi $ is contained in $K^{-,b}(grP_{\Gamma })$.
Composing with the equivalence, we obtain a functor $\Phi ^{\prime }:$ $%
gr_{\Lambda }\rightarrow D^{b}(gr_{\Gamma })$.

Let $\mathcal{A}$ be an abelian category, a Serre subcategory $\mathcal{T}$
of $\mathcal{A}$ is a full subcategory with the property that for every
short exact sequence of $\mathcal{A}$, say, $0\rightarrow A\rightarrow
B\rightarrow C\rightarrow 0$ the object $B$ is in $\mathcal{T}$ if and only
if $A$, $C\in \mathcal{T}$. By [6], we have a quotient abelian category $%
\mathcal{A}/\mathcal{T}$ and an exact functor $\pi :\mathcal{A}$ $%
\rightarrow \mathcal{A}/\mathcal{T}$, which induces at the level of derived
categories an exact functor: $D(\pi ):D(\mathcal{A)}$ $\rightarrow D(%
\mathcal{A}/\mathcal{T)}$. The following result is well known:

\begin{lemma}
\lbrack 20] The kernel of $D(\pi )$ is the full subcategory $\mathcal{K}$
with objects the complex with homology in $\mathcal{T}$ and $D(\pi )$
induces an equivalence of categories $D^{\ast }(\mathcal{A)}$ $/\mathcal{K}%
\cong D^{\ast }(\mathcal{A}/\mathcal{T)}$ for $\ast =+,-,b$.
\end{lemma}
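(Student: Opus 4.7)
The plan is to proceed in four steps. First I would identify the kernel of $D(\pi)$: since $\pi:\mathcal{A}\to \mathcal{A}/\mathcal{T}$ is exact, $H^{i}(D(\pi)(X^{\bullet}))=\pi(H^{i}(X^{\bullet}))$ for any complex $X^{\bullet}$. A complex is zero in the derived category iff all its cohomologies vanish, and an object $M\in \mathcal{A}$ satisfies $\pi(M)=0$ iff $M\in \mathcal{T}$. Combining these, $D(\pi)(X^{\bullet})=0$ iff $H^{i}(X^{\bullet})\in \mathcal{T}$ for every $i$, which is exactly the definition of $\mathcal{K}$.

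Second, I would verify that $\mathcal{K}$ is a thick triangulated subcategory of $D^{\ast}(\mathcal{A})$: it is closed under the shift and under direct summands, and for a distinguished triangle $X\to Y\to Z\to X[1]$ with two vertices in $\mathcal{K}$, the long exact cohomology sequence together with the Serre property of $\mathcal{T}$ forces the third into $\mathcal{K}$. Hence the Verdier quotient $D^{\ast}(\mathcal{A})/\mathcal{K}$ exists as a triangulated category, and by the universal property $D(\pi)$ factors through it to yield $\overline{D(\pi)}:D^{\ast}(\mathcal{A})/\mathcal{K}\to D^{\ast}(\mathcal{A}/\mathcal{T})$.

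Third, I would establish essential surjectivity. Given $Y^{\bullet}\in D^{\ast}(\mathcal{A}/\mathcal{T})$, I would lift it term-by-term to a complex in $\mathcal{A}$ up to quasi-isomorphism, using the fact that morphisms in $\mathcal{A}/\mathcal{T}$ are represented by roofs $M\xleftarrow{s}M'\to N$ with $\ker s,\operatorname{coker}s\in \mathcal{T}$. For $\ast=b$ the induction on the length of $Y^{\bullet}$ terminates; for $\ast=\pm$ one starts the induction from the bounded end and proceeds degree by degree, at each step possibly replacing the partially lifted complex by a quasi-isomorphic one to accommodate the next differential. Fully faithfulness is then handled by the calculus of fractions: a morphism in $D^{\ast}(\mathcal{A})/\mathcal{K}$ is an equivalence class of roofs $X\xleftarrow{s}X'\to Y$ with $\operatorname{cone}(s)\in \mathcal{K}$, and its image in $D^{\ast}(\mathcal{A}/\mathcal{T})$ is the evident roof obtained by applying $\pi$ term-wise. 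Surjectivity of this map follows from the same lifting principle applied to roofs over $\mathcal{A}/\mathcal{T}$, while injectivity follows because any morphism $X\to Y$ in $D^{\ast}(\mathcal{A})$ whose image in $D^{\ast}(\mathcal{A}/\mathcal{T})$ vanishes factors, up to a quasi-isomorphism $s$ with cone in $\mathcal{K}$, through an object whose terms lie in $\mathcal{T}$.

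The main obstacle is the coherent lifting argument in the third step: translating homological data over $\mathcal{A}/\mathcal{T}$ back to $\mathcal{A}$ consistently across all degrees of a complex while controlling the kernels and cokernels of comparison maps so that they remain in $\mathcal{T}$. Once this inductive construction with roofs is carried out, both essential surjectivity and fully faithfulness fall out of the same mechanism, and the statement follows. As this lemma is attributed to [20] we would invoke it as such rather than reproducing the full construction.
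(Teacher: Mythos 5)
The paper offers no proof of this lemma at all: it is stated as ``well known'' and attributed to [20] (Miyachi), so there is nothing internal to compare your argument against. Your outline is the standard one and is essentially what [20] carries out: identify the kernel using exactness of $\pi$ and the fact that $\pi(M)=0$ iff $M\in\mathcal{T}$, check that $\mathcal{K}$ is a thick triangulated subcategory via the long exact cohomology sequence and the Serre property, and then show the induced functor on Verdier quotients is an equivalence by a roof/lifting argument. The one point worth stressing is that the ``main obstacle'' you flag in your third step --- coherently lifting a complex over $\mathcal{A}/\mathcal{T}$, whose differentials are only equivalence classes of roofs, back to an honest complex over $\mathcal{A}$ up to a quasi-isomorphism with cone in $\mathcal{K}$ --- is the entire mathematical content of the lemma and is not a routine induction in full generality; it is precisely what Miyachi's paper establishes, and deferring it to [20], as you do in your final sentence, is exactly what the author does as well.
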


We apply the lemma in the following situation:

Let $\Gamma $ be a noetherian Koszul algebra, $gr_{\Gamma }$ the category of
finitely generated graded $\Gamma $-modules. Let $Qgr_{\Gamma }$ be the
quotient category of $gr_{\Gamma }$ by the Serre subcategory of the modules
of finite length. Let $\pi :gr_{\Gamma }\rightarrow Qgr_{\Gamma }$ be the
natural projection and $D(\pi ):D^{b}(gr_{\Gamma })\rightarrow
D^{b}(Qgr_{\Gamma })$ the induced functor. Denote by $\mathcal{F}_{\Gamma }$
be the full subcategory of $D^{b}(gr_{\Gamma })$ consisting of bounded
complexes of graded $\Gamma $-modules of finite length. Then we have:

\begin{theorem}
\lbrack 16]The functor $D(\pi )$ : $D^{b}(gr_{\Gamma })\rightarrow
D^{b}(Qgr_{\Gamma })$ has kernel $\mathcal{F}_{\Gamma }.$ It induces an
equivalence of triangulated categories $\sigma :D^{b}(gr_{\Gamma })$ /$%
\mathcal{F}_{\Gamma }\rightarrow D^{b}(Qgr_{\Gamma })$.
\end{theorem}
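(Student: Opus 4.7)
The plan is to specialize the Serre quotient lemma stated immediately above to the subcategory $\mathcal{T} \subseteq gr_\Gamma$ of graded $\Gamma$-modules of finite length. Since $\Gamma$ is noetherian, finite-length modules are closed under subobjects, quotients, and extensions, so $\mathcal{T}$ is indeed a Serre subcategory and $Qgr_\Gamma = gr_\Gamma/\mathcal{T}$ by definition. The lemma then yields that $\ker D(\pi)$ coincides with the thick subcategory $\mathcal{K} \subseteq D^b(gr_\Gamma)$ of bounded complexes whose cohomology modules all lie in $\mathcal{T}$, and that $D(\pi)$ induces a triangulated equivalence $D^b(gr_\Gamma)/\mathcal{K} \to D^b(Qgr_\Gamma)$.

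The theorem therefore reduces to the identification $\mathcal{F}_\Gamma = \mathcal{K}$ as thick subcategories of $D^b(gr_\Gamma)$. The inclusion $\mathcal{F}_\Gamma \subseteq \mathcal{K}$ is immediate: if every term $X^i$ of a bounded complex $X$ has finite length, then each subquotient $H^i(X)$ does as well. For the reverse inclusion, given $X \in \mathcal{K}$ I would argue by induction on the number of nonzero cohomology groups that $X$ is isomorphic in $D^b(gr_\Gamma)$ to a bounded complex of finite-length modules. The base case is the isomorphism $X \cong H^n(X)[-n]$ when only one cohomology is nonzero, and $H^n(X)$ lies in $\mathcal{T}$ by hypothesis. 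For the inductive step, let $n$ be the top non-vanishing degree of cohomology and consider the canonical truncation triangle
$$\tau_{\leq n-1}X \longrightarrow X \longrightarrow H^n(X)[-n] \longrightarrow \tau_{\leq n-1}X[1],$$
whose outer vertices lie in $\mathcal{F}_\Gamma$ -- the first by induction, the second because $H^n(X)$ has finite length. Hence $X \in \mathcal{F}_\Gamma$, provided $\mathcal{F}_\Gamma$ is closed under triangles.

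The main obstacle is precisely this closure of $\mathcal{F}_\Gamma$ under extensions in $D^b(gr_\Gamma)$. I would secure it by reinterpreting $\mathcal{F}_\Gamma$ as the smallest thick subcategory of $D^b(gr_\Gamma)$ containing the finite-length modules placed in every degree; equivalently, by verifying directly that for $A, C$ represented by bounded complexes of finite-length modules, any morphism $C \to A[1]$ in the derived category can, after enlarging each complex term-by-term by finite-length summands (which is possible using the noetherian hypothesis on $\Gamma$ together with the fact that quasi-isomorphisms between bounded complexes with finite-length cohomology can be refined within the class of finite-length-term complexes), be represented by an honest chain map whose ordinary mapping cone is once again a bounded complex of finite-length modules. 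Combining this closure property with the truncation induction of the previous paragraph yields $\mathcal{K} \subseteq \mathcal{F}_\Gamma$, and the lemma then produces the desired triangulated equivalence $\sigma : D^b(gr_\Gamma)/\mathcal{F}_\Gamma \to D^b(Qgr_\Gamma)$.
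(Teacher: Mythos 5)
You have correctly reconstructed the paper's route: the theorem is quoted from [16], and its proof in the present text is nothing more than the specialization of the preceding localization lemma of Miyachi to the Serre subcategory $\mathcal{T}$ of finite-length graded modules, so the whole content beyond that lemma is the identification of the kernel $\mathcal{K}$ (bounded complexes with finite-length cohomology) with $\mathcal{F}_{\Gamma}$ (bounded complexes with finite-length terms). Your first paragraph and the inclusion $\mathcal{F}_{\Gamma}\subseteq\mathcal{K}$ are fine.

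The gap is in the reverse inclusion. You make everything rest on the claim that a morphism $C\rightarrow A[1]$ in $D^{b}(gr_{\Gamma })$ between bounded complexes of finite-length modules can be represented by a chain map whose cone again has finite-length terms, and the parenthetical justification offered --- that quasi-isomorphisms between bounded complexes with finite-length cohomology ``can be refined within the class of finite-length-term complexes'' --- is essentially the statement being proved: a roof $C\leftarrow Z\rightarrow A[1]$ only provides a $Z$ with finite-length \emph{cohomology}, and replacing it by a quasi-isomorphic complex with finite-length \emph{terms} is exactly the open point. The circle is broken by using the internal grading instead of homological truncations: if $X$ is a bounded complex of finitely generated graded $\Gamma $-modules with every $H^{i}(X)$ of finite length, choose $k$ with $H^{i}(X)_{\geq k}=0$ for all $i$. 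Since $\Gamma $ is positively graded and the differentials preserve internal degree, $X_{\geq k}$ is a subcomplex and $H^{i}(X_{\geq k})=(H^{i}(X))_{\geq k}=0$, so $X\rightarrow X/X_{\geq k}$ is a quasi-isomorphism; its target has terms $(X^{i})_{<k}$, which are finite-dimensional because finitely generated graded modules over the locally finite positively graded $\Gamma $ are bounded below and locally finite. This one step yields $\mathcal{K}\subseteq \mathcal{F}_{\Gamma }$ up to isomorphism in $D^{b}(gr_{\Gamma })$ and makes the induction and the cone-closure discussion unnecessary. (If one only wants the displayed equivalence, your truncation induction already shows that the triangulated subcategory generated by $\mathcal{F}_{\Gamma }$ is $\mathcal{K}$, and the Verdier quotient by $\mathcal{F}_{\Gamma }$ coincides with the quotient by the thick subcategory it generates; but the assertion that the kernel \emph{equals} $\mathcal{F}_{\Gamma }$ needs the truncation argument above.)
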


Let $q:D^{b}(gr_{\Gamma })$ $\rightarrow D^{b}(gr_{\Gamma })$ /$\mathcal{F}%
_{\Gamma }$ be the quotient functor. Then $\sigma q=D(\pi )$. The functor $%
j:K^{-,b}(grP_{\Gamma })\rightarrow D^{b}(gr_{\Gamma })$ is truncation, $j$
is an equivalence.

Let $\Lambda $ be a Koszul algebra with Yoneda algebra $\Gamma $ such that
both are AS graded Gorenstein noetherian algebras of finite local cohomology
dimension on both sides. The functor $\theta :gr_{\Lambda }\rightarrow
D^{b}(Qgr_{\Gamma })$ is the composition: $gr_{\Lambda }\overset{\Phi }{%
\rightarrow }\mathfrak{lcp}_{\Gamma }^{-,b}\overset{i}{\rightarrow }%
K^{-,b}(grP_{\Gamma })\overset{j}{\rightarrow }D^{b}(gr_{\Gamma })$ $\overset%
{D(\pi )}{\rightarrow }D^{b}(Qgr_{\Gamma })$ , where $i$ is just the
inclusion.

\begin{center}
$%
\begin{array}{ccccccc}
\mathfrak{lcp}_{\Gamma }^{-,b} & \overset{i}{\rightarrow } & \text{K}^{-,b}%
\text{(grP}_{\Gamma }\text{)} & \overset{j}{\rightarrow } & \text{D}^{b}%
\text{(gr}_{\Gamma }\text{)} & \overset{q}{\longrightarrow } & \text{D}^{b}%
\text{(gr}_{\Gamma }\text{)/}\mathcal{F}_{\Gamma } \\ 
\Phi \uparrow &  &  &  & \text{D(}\pi \text{)}\downarrow & \sigma \swarrow & 
\\ 
\text{gr}_{\Lambda } &  & \overset{\theta }{\rightarrow } &  & \text{D}^{b}%
\text{(Qgr}_{\Gamma }\text{)} &  & 
\end{array}%
$
\end{center}

Now let $P$ be a finitely generated projective graded $\Lambda $-module, $%
P=\oplus P_{i}[n_{i}]$, with each $P_{i}$ generated in degree zero. Then $%
\Phi (P)$ is isomorphic in the category of complexes over $gr_{\Gamma }$ to $%
\oplus \Phi (P_{i})[n_{i}]$ and each $\Phi (P_{i})$ is a projective
resolution of a semisimple $\Gamma $-module. It follows $\theta $ sends any
map factoring through a graded projective module to a zero map in D$^{b}$(Qgr%
$_{\Gamma }$). Consequently, $\theta $ induces a functor \underline{$\theta $%
}:\underline{$gr$}$_{\Gamma }\rightarrow $D$^{b}$(Qgr$_{\Gamma }$). The
functor $\theta $ sends exact sequences to exact triangles, the syzygy
functor $\Omega :$\underline{$gr$}$_{\Lambda }\rightarrow $\underline{$gr$}$%
_{\Lambda }$ is an endofunctor that makes \underline{$gr$}$_{\Lambda }$
"half" triangulated, given an exact sequence $0\rightarrow A\overset{j}{%
\rightarrow }B\overset{t}{\rightarrow }C\rightarrow 0$ in $gr_{\Lambda }$
and $p:P\rightarrow C$ the projective cover, there is an induced exact
commutative diagram:

\begin{center}
$%
\begin{array}{ccccccc}
0\rightarrow & \Omega (C) & \rightarrow & P & \rightarrow & C & \rightarrow 0
\\ 
& w\downarrow &  & \downarrow &  & \downarrow 1 &  \\ 
0\rightarrow & A & \rightarrow & B & \rightarrow & C & \rightarrow 0%
\end{array}%
$
\end{center}

We obtain a half triangle: $\Omega (C)\rightarrow A\rightarrow B\rightarrow
C $ and \underline{$\theta $} sends the half triangle into a triangle in D$%
^{b} $(Qgr$_{\Gamma }$). We want to construct a triangulated category 
\underline{$gr$}$_{\Lambda }[\Omega ^{-1}]$ such that $\Omega $ is an
equivalence which acts as the shift and a functor of half triangulated
categories $\lambda :$\underline{$gr$}$_{\Lambda }\rightarrow $\underline{$%
gr $}$_{\Lambda }[\Omega ^{-1}]$ such that given any triangulated category $%
D $ and a functor of half triangulated categories: $\beta $ \underline{$gr$}$%
_{\Lambda }\rightarrow D$ there is a unique functor of triangulated
categories $\overset{\wedge }{\beta }:$ \underline{$gr$}$_{\Lambda }[\Omega
^{-1}]\rightarrow D$ such that $\overset{\wedge }{\beta }\lambda =\beta $.

We recall the construction given by Buchweitz and reproduced in [2], [15].

Let $(\mathcal{A}$, $\phi )$ be a category with endofunctor, if $(\mathcal{B}
$,$\psi )$ is another pair, then a functor $F:\mathcal{A}\rightarrow 
\mathcal{B}$ is said a morphism of pairs if it makes the diagram

\begin{center}
$%
\begin{array}{ccc}
\mathcal{A} & \overset{\phi }{\rightarrow } & \mathcal{A} \\ 
\downarrow F &  & \downarrow F \\ 
\mathcal{B} & \overset{\psi }{\rightarrow } & \mathcal{B}%
\end{array}%
$
\end{center}

commute, this is: the functors $F\phi $ and $\psi F$ are naturally
isomorphic. If $\psi $ happens to be an auto equivalence, we say that the
morphism $F$ inverts $\phi .$Then there is a the following universal
problem. Given a pair $(\mathcal{A}$, $\phi )$, find a pair $(\mathcal{A}$[$%
\phi ^{-1}],\rho )$ and a morphism of pairs $G:$ $(\mathcal{A}$, $\phi
)\rightarrow $ $(\mathcal{A}$[$\phi ^{-1}],\rho )$ such that $G$ inverts $%
\phi $ and for any morphism of pairs $F:$ $(\mathcal{A}$, $\phi )$ $%
\rightarrow $ $(\mathcal{B}$,$\psi )$ such that $F$ inverts $\phi $, there
is a unique morphism of pairs $F^{\prime }:(\mathcal{A}$[$\phi ^{-1}],\rho
)\rightarrow (\mathcal{B}$,$\psi )$ making the diagram

\begin{center}
$%
\begin{array}{cccccc}
(\mathcal{A},\phi ) &  & \overset{F}{\rightarrow } &  & (\mathcal{B},\psi )
&  \\ 
G & \searrow &  & \nearrow & F^{\prime } &  \\ 
&  & (\mathcal{A}[\phi ^{-1}],\rho ) &  &  & 
\end{array}%
$
\end{center}

Commute.

The objects of $\mathcal{A}$[$\phi ^{-1}]$ are the formal symbols $\phi
^{-n}M$ where $M$ is an object of $\mathcal{A}$ and $n\geq 0$, $\phi ^{0}M=M$%
. If $M$, $N$ are objects in $\mathcal{A}$[$\phi ^{-1}],$we define the
morphisms by

\begin{center}
$%
\begin{array}{c}
Mor_{\mathcal{A}[\phi ^{-1}]}(M,N)=\underset{k}{\underrightarrow{\lim }}Mor_{%
\mathcal{A}}(\phi ^{k}M,\phi ^{k}N)%
\end{array}%
$
\end{center}

where we assume $M=\phi ^{-m}M^{\prime }$ and $N=\phi ^{-n}N^{\prime }$ and $%
k\geq $max\{$m,n$\} .(See [MM] for details)

We define the endofunctor $\rho :\mathcal{A}[\phi ^{-1}]\rightarrow \mathcal{%
A}[\phi ^{-1}]$ by setting $\rho (M)=\phi (M)$ and $\rho (\phi ^{-n}M)=\phi
^{-n+1}(M)$ for any $M$ in $\mathcal{A}$ and any natural number $n$. If $f$
is a morphism represented by some $f_{n}:\phi ^{n}M\rightarrow \phi ^{n}N$
and $n$ sufficiently large, then $\rho (f)$ is represented by $\phi (f_{n})$.

We obtain the morphism of pairs $G:$ $(\mathcal{A}$, $\phi )\rightarrow $ $(%
\mathcal{A}$[$\phi ^{-1}],\rho )$ having the desired properties.

We apply this construction to our pair (\underline{$gr$}$_{\Lambda },\Omega
) $ to obtain a pair (\underline{$gr$}$_{\Lambda }[\Omega ^{-1}]$, $\Omega )$
and a map of pairs $G:$ (\underline{$gr$}$_{\Lambda },\Omega )$ $\rightarrow 
$ (\underline{$gr$}$_{\Lambda }[\Omega ^{-1}]$, $\Omega ^{-1})$

One can check as in [15] or [2] that (\underline{$gr$}$_{\Lambda }[\Omega
^{-1}]$, $\Omega ^{-1})$ is a triangulated category and $\underline{\theta }%
: $\underline{$gr$}$_{\Lambda }\rightarrow $D$^{b}$(gr$_{\Gamma }$) induces
an exact functor $\overset{\wedge }{\theta }:$\underline{ $gr$}$_{\Lambda
}[\Omega ^{-1}]\rightarrow $D$^{b}$(Qgr$_{\Gamma }$) such that the triangle

\begin{center}
$%
\begin{array}{ccc}
\underline{gr}_{\Lambda } &  &  \\ 
\lambda \downarrow & \searrow \underline{\theta } &  \\ 
\underline{\ gr}_{\Lambda }[\Omega ^{-1}] & \overset{\overset{\wedge }{%
\theta }}{\longrightarrow } & D^{b}(Qgr_{\Gamma })%
\end{array}%
$
\end{center}

We now state the main result of the paper.

\begin{theorem}
Let $\Lambda $ be a Koszul algebra with Yoneda algebra $\Gamma $ such that
both are AS graded Gorenstein noetherian algebras of finite local cohomology
dimension on both sides. Then the linearization functor

$\overset{\wedge }{\theta }:\underline{\ gr}_{\Lambda }[\Omega
^{-1}]\rightarrow D^{b}(Qgr_{\Gamma })$ is a duality of triangulated
categories.
\end{theorem}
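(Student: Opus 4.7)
The plan is to prove that $\overset{\wedge}{\theta}$ is a contravariant equivalence of triangulated categories by establishing essential surjectivity and full faithfulness separately. The key reduction in both steps is to replace arbitrary objects and morphisms by ones on which the linearization functor $\Phi$ becomes a genuine duality: on the $\Lambda$-side, by passing to high enough syzygies (which, by the theorem on weakly Koszul syzygies, become weakly Koszul), and on the $\Gamma$-side, by passing to sufficiently high truncations of each term (which, by the main theorem of Section~2, become Koszul). The passage from $\underline{gr}_{\Lambda}$ to $\underline{gr}_{\Lambda}[\Omega^{-1}]$ on one side, and from $D^{b}(gr_{\Gamma})$ to $D^{b}(Qgr_{\Gamma})$ on the other, is designed precisely to cancel the two subcategories $\Phi$ ignores: finitely generated graded projective $\Lambda$-modules, and bounded complexes of finite length $\Gamma$-modules.

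For essential surjectivity, I would take $X_{\bullet} \in D^{b}(Qgr_{\Gamma})$, lift it via $\sigma^{-1}$ to $M_{\bullet} \in D^{b}(gr_{\Gamma})$, and apply the totally-linear-approximation proposition to obtain a totally linear subcomplex $L_{\bullet} \subseteq M_{\bullet}$ with $M_{\bullet}/L_{\bullet} \in \mathcal{F}_{\Gamma}$. The preceding proposition on linear projective approximations then yields a quasi-isomorphism $P_{\bullet} \to L_{\bullet}$ with $P_{\bullet}$ a bounded-on-the-right linear complex of finitely generated graded projective $\Gamma$-modules. Since $\Phi$ is a duality from $gr_{\Lambda}$ onto $\mathfrak{lcp}_{\Gamma}^{-}$, pick $N \in gr_{\Lambda}$ with $\Phi(N) \cong P_{\bullet}$; then $\theta(N) \cong X_{\bullet}$ in $D^{b}(Qgr_{\Gamma})$, so $\overset{\wedge}{\theta}(N) \cong X_{\bullet}$.

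For full faithfulness, I would use the direct-limit description
\[
\mathrm{Hom}_{\underline{gr}_{\Lambda}[\Omega^{-1}]}(M, N) \;=\; \varinjlim_{k}\, \mathrm{Hom}_{\underline{gr}_{\Lambda}}(\Omega^{k} M, \Omega^{k} N).
\]
For $k$ large enough, both $\Omega^{k}(M)$ and $\Omega^{k}(N)$ are weakly Koszul, so $\Phi(\Omega^{k} M)$ and $\Phi(\Omega^{k} N)$ are minimal linear projective resolutions of the Koszul $\Gamma$-modules $F(\Omega^{k} M)$ and $F(\Omega^{k} N)$. The duality $\Phi$ puts morphisms between weakly Koszul $\Lambda$-modules in bijection with chain maps between these linear resolutions, and by the lemma that null-homotopic maps out of a linear complex into a diagonal one must vanish (together with the corollary that morphisms in the derived category out of bounded-on-the-right linear projective complexes are represented by chain maps), this matches $\mathrm{Hom}_{D^{b}(gr_{\Gamma})}(F(\Omega^{k} N), F(\Omega^{k} M))$. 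Quotienting by $\mathcal{F}_{\Gamma}$ matches this with $\mathrm{Hom}_{D^{b}(Qgr_{\Gamma})}\bigl(\theta(\Omega^{k} M), \theta(\Omega^{k} N)\bigr)$, and the direct limit over $k$, combined with the fact that $\Omega$ acts in the stabilization as an inverse shift under $\overset{\wedge}{\theta}$, delivers the required bijection.

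The main obstacle I expect is the matching of the two notions of negligible morphism on each side. On the $\Lambda$-side, a map is killed in $\underline{gr}_{\Lambda}[\Omega^{-1}]$ precisely when some iterated syzygy factors through a finitely generated graded projective; on the $\Gamma$-side, a chain map is killed in $D^{b}(Qgr_{\Gamma})$ when its cone lies in $\mathcal{F}_{\Gamma}$. The forward direction is clean, since $\Phi$ of a finitely generated projective $\Lambda$-module is a linear projective resolution of a semisimple $\Gamma$-module and therefore lies in $\mathcal{F}_{\Gamma}$. The delicate direction requires showing that a chain map between linear projective $\Gamma$-complexes whose cone has finite-length cohomology becomes $\Phi$-null-homotopic after enough syzygies on the $\Lambda$-side; this is where the Castelnovo--Mumford and $Ext$-regularity bounds of Section~2 are indispensable, since they force the levels at which the weakly Koszul and Koszul-up-to-shift reductions kick in to be uniformly controlled by $CMreg$ and $Ext$-$reg$. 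Once this kernel identification is settled, compatibility with the triangulated structures is automatic: $\theta$ sends exact sequences to triangles in $D^{b}(Qgr_{\Gamma})$, and $\overset{\wedge}{\theta}$ exists and preserves triangles by the universal property defining $\underline{gr}_{\Lambda}[\Omega^{-1}]$.
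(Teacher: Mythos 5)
Your proof of density is exactly the paper's argument: Proposition 8 (approximate a bounded complex by a totally linear subcomplex with finite-length quotient), then Proposition 7 (replace it by a quasi-isomorphic bounded-on-the-right linear complex of finitely generated projectives), then Proposition 6 ($\Phi$ is a duality onto $\mathfrak{lcp}_{\Gamma }^{-}$) to produce $M$ with $\Phi (M)\cong P_{\bullet }$. For fullness and faithfulness the paper writes only ``for the rest of the proof we proceed as in [15]'', so your sketch via the direct-limit description of morphisms in $\underline{gr}_{\Lambda }[\Omega ^{-1}]$ and weakly Koszul syzygies --- including the honestly flagged delicate point about matching the two notions of negligible morphism --- is consistent with, and more explicit than, what the paper itself records.
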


\begin{proof}
We will only check the functor $\overset{\wedge }{\theta }$ is dense, for
the rest of the proof we proceed as in [15].

Choose any bounded complex $B_{\bullet }$ of finitely generated graded $%
\Gamma $-modules. By Proposition 8, the complex $B_{\bullet }$ is isomorphic
in $D^{b}(Qgr_{\Gamma })$ to a totally linear complex, which is in turn, by
Proposition 7, isomorphic to a linear complex $P_{\bullet }$ of finitely
generated graded projective $\Gamma $-modules with zero homology except for
a finite number of indices. By Proposition 6, there is a finitely generated
graded $\Lambda $-module $M$ such that $\Phi (M)\cong P_{\bullet }$.
Therefore : $\overset{\wedge }{\theta }(M)\cong B_{\bullet }$ in $%
D^{b}(Qgr_{\Gamma })$.
\end{proof}

\begin{corollary}
Let $\Lambda $ be a Koszul algebra with Yoneda algebra $\Gamma $ such that
both are AS graded Gorenstein noetherian algebras of finite local cohomology
dimension on both sides. Then the linearization functor $\overset{\wedge }{%
\theta ^{\prime }}:\underline{\ gr}_{\Gamma }[\Omega ^{-1}]\rightarrow
D^{b}(Qgr_{\Lambda })$ is a duality of triangulated categories.
\end{corollary}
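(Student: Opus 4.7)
The plan is to deduce this corollary by symmetry, applying Theorem 13 with the roles of $\Lambda$ and $\Gamma$ exchanged. The hypothesis of Theorem 13 is completely symmetric in the pair: both algebras are Koszul, AS graded Gorenstein, noetherian and of finite local cohomology dimension on both sides. Since $\Lambda$ is Koszul with Yoneda algebra $\Gamma$, by Koszul duality $\Gamma$ is itself Koszul and its Yoneda algebra is isomorphic to $\Lambda$. Hence the pair $(\Gamma,\Lambda)$ satisfies exactly the same hypotheses as the pair $(\Lambda,\Gamma)$, and Theorem 13 applied to $\Gamma$ delivers a duality $\overset{\wedge}{\theta'}:\underline{gr}_{\Gamma}[\Omega^{-1}]\rightarrow D^{b}(Qgr_{\Lambda})$ of triangulated categories.

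More concretely, one repeats the construction of the linearization functor $\Phi$ starting from $\Gamma$: for a finitely generated graded $\Gamma$-module $N$, one forms $\Phi'(N)=\{D(N_{k+1})\otimes_{\Lambda_{0}}\Lambda[-k-1],d_{k}\}$, a right-bounded linear complex of finitely generated graded projective $\Lambda$-modules, using that the Yoneda algebra of $\Gamma$ is $\Lambda$. All the auxiliary results leading to Theorem 13 were proved in symmetric form for any Koszul AS Gorenstein noetherian algebra of finite local cohomology dimension on both sides; in particular Theorem 6 (the truncation $N_{\geq k}[k]$ is Koszul), Proposition 6 (the linearization is a duality in the quadratic case), Lemma on weak Koszulity, Proposition 7 (linear approximations of totally linear complexes), Proposition 8 (totally linear subcomplex approximation up to finite length) and the kernel computation in Theorem 12 apply verbatim to $\Gamma$. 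Assembling these gives the induced functor $\underline{\theta'}:\underline{gr}_{\Gamma}\rightarrow D^{b}(Qgr_{\Lambda})$, which in turn extends through the Buchweitz stabilization to $\overset{\wedge}{\theta'}:\underline{gr}_{\Gamma}[\Omega^{-1}]\rightarrow D^{b}(Qgr_{\Lambda})$.

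The verification that $\overset{\wedge}{\theta'}$ is a duality then follows step-by-step the argument in the proof of Theorem 13: density uses the linearization Propositions 6, 7 and 8 on the $\Gamma$-side, while full faithfulness is handled exactly as in [15], which is what Theorem 13 refers to for the remainder of its proof.

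The main delicate point, and in a sense the only real obstacle, is checking that the symmetric hypotheses really do transfer: one must confirm that noetherianness, finiteness of local cohomology dimension on both sides, and the AS Gorenstein property are genuinely self-dual properties of the pair $(\Lambda,\Gamma)$ under the Koszul duality identification of the Yoneda algebra of $\Gamma$ with $\Lambda$. This is guaranteed by Theorem 10 together with the standing symmetric assumptions of the corollary, so no additional hypothesis is required.
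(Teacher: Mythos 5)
Your proposal is correct and takes exactly the same route as the paper, whose entire proof of this corollary is the single line ``It follows by symmetry''; you have merely spelled out what that symmetry consists of (Koszul duality identifying the Yoneda algebra of $\Gamma$ with $\Lambda$, and the self-dual nature of the hypotheses), which is a faithful expansion rather than a different argument. The only blemishes are cosmetic: the main theorem you cite is Theorem 12 in the paper's numbering (not 13), and the AS Gorenstein transfer statement is Theorem 7 (not 10).
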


\begin{proof}
It follows by symmetry.
\end{proof}

\begin{center}
{\LARGE References}
\end{center}

\bigskip

[1] Artin M., Schelter W. Graded algebras of global dimension 3, Adv. Math.
66 (1987), 171-216.

[2] Beligiannis A. The homological theory of contravariantly finite
subcategories:\ Auslander-Buchweitz Contexts, Gorenstein Categories and
(C9)-Stabilizations, Comm. in Algebra 28 (19), (2000), 4547-4596.

[3] Bernstein J., Gelfand I.M., Gelfand S.I., Algebraic vector bundles over P%
$^{n}$ and problems of linear algebra. Finkt. Anal. Prilozh. 12, No. 3,
66-67, (1978) English transl. Funct. Anal. Appl. 12, 212-214 (1979)

[4] Carlson J.F. The Varieties and the Cohomology Ring of a Module, J. of
Algebra, Vol. 85, No. 1, (1983) 104-143.

[5] Cartan H. Eilenberg S. Homological Algebra, Princeton Mathematical
Series 19, Princeton University Press 1956.

[6] Gabriel P. Des Cat\'{e}gories abeliennes, Bull. Soc. Math. France, 90
(1962), 323-448.

[7] Gelfand S.I., Manin Yu. I., Methods of homological algebra,
Springer-Verlag (1996).

[8] J$\varnothing $rgensen P. Local Cohomology for Non Commutative Graded
Algebras. Comm. in Algebra, 25(2), 575-591 (1997)

[9] J$\varnothing $rgensen P. Linear free resolutions over non-commutative
algebras. Compositio Math. 140 (2004) 1053-1058.

[10] J\o rgensen, P.; Zhang, James J. Gourmet's guide to Gorensteinness.
Adv. Math. 151 (2000), no. 2, 313--345.

[11] Mart\'{\i}nez-Villa, R. Applications of Koszul algebras: the
preprojective algebra. Representation theory of algebras (Cocoyoc, 1994),
487--504, CMS Conf. Proc., 18, Amer. Math. Soc., Providence, RI, 1996.

[12] Martinez-Villa, R. Graded, Selfinjective, and Koszul Algebras, J.
Algebra 215, 34-72 1999

[13] Martinez-Villa, R. Koszul algebras and the Gorenstein condition.
Representations of algebras (S\~{a}o Paulo, 1999), 135--156, Lecture Notes
in Pure and Appl. Math., 224, Dekker, New York, 2002.

[14] Martinez-Villa, R. Local cohomology and non commutative Gorenstein
Algebras, Preprint, Centro de Ciencias Matem\'{a}ticas, UNAM (2012).

[15] Mart\'{\i}nez-Villa, R., Martsinkovsky, A. Stable Projective Homotopy
Theory of Modules, Tails, and Koszul Duality, Comm. Algebra 38 (2010), no.
10, 3941--3973.

[16] Mart\'{\i}nez Villa, R; Saor\'{\i}n, M. Koszul equivalences and
dualities. Pacific J. Math. 214 (2004), no. 2, 359--378.

[17] Mart\'{\i}nez-Villa, R.; Zacharia, Dan Approximations with modules
having linear resolutions. J. Algebra 266 (2003), no. 2, 671--697.

[18] Mart\'{\i}nez-Villa, R. ; Zacharia, Dan Selfinjective Koszul algebras.
Th\'{e}ories d'homologie, repr\'{e}sentations et alg\`{e}bres de Hopf. AMA
Algebra Montp. Announc. 2003, Paper 5, 5 pp. (electronic).

[19] Miyachi, Jun-Ichi, Derived Categories with Applications to
Representation of Algebras, Chiba University, June 2000.

[20] Miyachi, Jun-Ichi, Localization of triangulated categories and derived
categories, J. Algebra, 141 (1991), 463-483.

[21] Mazorchuk, V. Ovsienko, S. A pairing in homology and the category of
linear complexes of tilting modules for a quasi-hereditary algebra, J. Math.
Kyoto Univ. 45 (2005) no. 4, 711-741.

[22] Mori, I., Rationality of the Poincare series for Koszul algebras,
Journal of Algebra, V. 276, no. 2, (2004) pag. 602-624.

[23] Popescu N. Abelian categories with applications to rings and modules,
Academic Press (1973).

[24] Rotman J.J. An Introduction to Homological Algebra, Second Edition
Universitext, Springer, 2009.

[25] Smith P. Some finite dimensional algebras related to elliptic curves,
"Rep. Theory of Algebras and Related Topics", CMS Conference Proceedings,
Vol. 19, 315-348, Amer. Math. Soc. Providence, 1996.

\end{document}